\documentclass[10pt ]{article}
\usepackage{amsmath,amssymb,amsthm,mathrsfs,dsfont}
\usepackage[margin=3cm]{geometry}
\usepackage{titlesec,hyperref}
\usepackage{color}
\usepackage{fancyhdr}
\usepackage{subfigure}
\usepackage[graphicx]{realboxes}
\usepackage{float}
\usepackage{enumerate}
\usepackage{enumitem}
\setlist[enumerate]{label=(\arabic*), leftmargin=*, itemsep=2pt}
\usepackage{dsfont} 
\usepackage{extarrows}
\titleformat{\subsection}{\it}{\thesubsection.\enspace}{1pt}{}
\newtheorem{theorem}{Theorem}[section]

\newtheorem{corollary}{Corollary}[section]
\newtheorem{definition}{Definition}[section]
\newtheorem{lemma}{Lemma}[section]
\newtheorem{proposition}{Proposition}[section]
\newtheorem{remark}{Remark}[section]

\numberwithin{equation}{section}

\allowdisplaybreaks

\numberwithin{equation}{section}

\begin{document}
\title{Local well-posedness, blow-up phenomenon and ill-posedness for a model system of the radiating gas
	\hspace{-4mm}
}

\author{
	Jian $\mbox{Chen}$ \footnote{E-mail:  chenj883@mail2.sysu.edu.cn}, \quad
    Zhaonan $\mbox{Luo}$ \footnote{E-mail:  luozhn7@mail.sysu.edu.cn}, \quad
	Yuchen $\mbox{Yang}$\footnote{E-mail: yangych39@mail2.sysu.edu.cn} \quad and
	Zhaoyang $\mbox{Yin}$\footnote{E-mail: mcsyzy@mail.sysu.edu.cn}\\
	$\mbox{School}$ of Science,\\ {Shenzhen Campus of Sun Yat-sen University, Shenzhen 518107, China}
}
\date{}
\maketitle
\hrule

\begin{abstract}
	 In this paper, we investigate  the Cauchy problem for a model system of radiating gas in multi-dimensional space. Firstly, we prove that the model
	is locally well-posed in Besov spaces $B^s_{p,r}(\mathbb{R}^d)$ with $s>1+\frac{d}{p}$
(resp., $s=1+\frac{d}{p}$, $r=1$), including the endpoint space $B^1_{\infty,1}(\mathbb{R}^d)$. Secondly, we show that the data-to-solution map is not uniformly continuous on bounded subsets of these Besov spaces. Moreover, we construct a blow-up
	scenario and derive two sufficient blow-up conditions, which improve the previous blow-up condition obtained in
	\cite{MR3016436}.
	Finally, we prove
	that the Cauchy problem is ill-posed in the critical space
	$B^{1+\frac{d}{p}}_{p,r}(\mathbb{R}^d)$ for $1\leq p\leq\infty$ and $r>1$ in the sense of norm
	inflation.

	\vspace*{5pt}
\noindent{\it 2020 Mathematics Subject Classification}:
 35L65, 35B30, 35B44, 35R25. 
					
	\vspace*{5pt}
					\noindent{\it Keywords}: Radiating gas system; Local well-posedness; Blow-up conditions; Ill-posedness
	
\end{abstract}

\vspace*{10pt}

\tableofcontents

\section{Introduction}

This paper is concerned with the local well-posedness, blow-up phenomenon and the ill-posedness of the Cauchy problem of the hyperbolic-elliptic coupled system, taking the form of
\begin{align}\label{eq}
    \begin{cases}
     \partial_tu+\nabla\cdot f(u)+ \mathrm{div}q = 0, &\ x\in\mathbb{R}^d,t>0,\\
   -\nabla \mathrm{div}q+q+\nabla u=0, &\ x\in \mathbb{R}^d,t>0,   \\
   u(x,t)|_{t=0} = u_0(x), &\ x\in\mathbb{R}^d,
    \end{cases}
\end{align}
where $u = u(x,t):\mathbb{R}^d \times[0,\infty)\rightarrow\mathbb{R},\ q=\left(q_1(x,t),\dots,q_d(x,t)\right):\mathbb{R}^d \times[0,\infty)\rightarrow\mathbb{R}^d$ are unknown functions. $d \geq 1$ denotes the spatial dimension. The function $f(u)=\left(f_1(u),\dots,f_d(u)\right):\mathbb{R}\rightarrow\mathbb{R}^d$ is a given smooth flux function. For computational convenience, we express $q$ in terms of $u$, which reduces the system (\ref{eq}) to
\begin{align}\label{eq;scalar}
    \begin{cases}
        \partial_tu+\alpha(u)\cdot\nabla u = H[u], &\ x\in\mathbb{R}^d,t>0,\\
        u(x,t)|_{t=0} = u_0(x), &\ x\in\mathbb{R}^d,
    \end{cases}
\end{align}
where $\alpha(u):=(f'_{1}(u),\dots,f'_{d}(u))$, $H[u] := (I-\Delta)^{-1}\Delta u$ and $\Delta:= \overset{d}{\underset{i=1}{\sum}}
 \frac{\partial^2}{\partial x_i^2}$.
System (\ref{eq}) was first derived in \cite{hamer1971} for one dimensional version and in \cite{diFrancesco2007} for multi-dimensional version, which simplifies the model for the motion of radiating gas in $\mathbb{R}^d$. It provides a simplified description of the interaction between nonlinear convection and radiative heat transport in a compressible gas. In a certain physical situation, system (\ref{eq}) gives a good approximation to the fundamental system of equations describing the motion of a radiating gas
\begin{align}\label{eq;ra}
\begin{cases}
\rho_t + \operatorname{div}(\rho u) = 0,\\
(\rho u)_t + \operatorname{div}\big(\rho u\otimes u + pI\big) = 0,\\
\left\{\rho\left(e+\frac{|u|^2}{2}\right)\right\}_t
+ \operatorname{div}\left\{\rho u\left(e+\frac{|u|^2}{2}\right)+pu+q\right\}=0,\\
-\nabla\operatorname{div}q + a q + b\nabla(\theta^4)=0,
\end{cases}
\end{align}
where $\rho \geq0,u,p,e$ and $\theta \geq 0$ denote the mass density, velocity, pressure, internal energy and absolute temperature of the gas, respectively. $q$ is the radiative heat flux, and $a,b$ are two given positive constants depending on the physical properties of the gas. 

As an important model for radiating gases, system \eqref{eq} has attracted considerable attention since its introduction and has been extensively studied. In the one-dimensional case, Schochet and Tadmor \cite{Schochet199295} first studied the existence of traveling waves. Kawashima and Nishibata \cite{Kawashima199895} investigated the existence and asymptotic stability of traveling waves. Lattanzio and Marcati \cite{MR1970037} studied the global well-posedness and relaxation limits of the model. Ji, Zhang and Zhu \cite{doi:10.1137/21M1459848} established the asymptotic stability of solutions on the half-line. Moreover, Hong, Zhang and Zhu \cite{HONG2026111011} studied the nonlinear global stability of rarefaction waves under large perturbations. For a related parabolic–elliptic system, Zhu and Zhu \cite{Zhu2026} proved the unconditional global stability of solutions on the half-line. For further results in the one-dimensional case, we refer the reader to \cite{MR2271601,MR1671523, MR2579710, MR3010836, MR1979850, MR2812581}. In the multi-dimensional case, Wang and Wang \cite{MR2527538} studied the pointwise estimates of solutions. They also investigated the blow-up and global existence of classical solutions \cite{MR3016436}. Gao, Ruan and Zhu \cite{MR2414408} obtained decay rates to the planar rarefaction waves for the model system. Zhang and Zhu \cite{MR4911714} showed the asymptotic stability of the planar rarefaction wave in a two-dimensional half-space. Recently, Zhang, Zhu and Zhu \cite{MR4972319} proved unconditional global stability of solutions to the Cauchy problem for a related parabolic-elliptic system. For further developments in the multi-dimensional case, we refer the reader to \cite{MR2402882,MR3224577,MR2853190,MR3264733,MR4768460,MR4623809}.

Recently, Crin-Barat and Said-Houari \cite{crinbarat2026hamer} established global well-posedness for system (\ref{eq}) with small initial data in hybrid Besov spaces with distinct regularity exponents at low and high frequencies. They also derived optimal time decay estimates under the additional assumption $u_0 \in \dot{B}^{-d/2}_{2,\infty}(\mathbb{R}^d)$ for $d\geq2$. However, for general $1\le p\leq \infty$, the local well-posedness has not been studied. In this paper, we establish local well-posedness in general Besov spaces, with particular attention to the critical spaces and the endpoint space $B_{\infty,1}^{1}(\mathbb{R}^d)$. Local well-posedness in the critical Besov spaces with $p<\infty$ follows from standard arguments (see \cite{MR4595933}). In the endpoint space $B^1_{\infty,1}(\mathbb{R}^d)$, existence is obtained by a compactness argument, while uniqueness is proved by combining a logarithmic interpolation inequality with Osgood's lemma. Following the idea in \cite{MR3926133}, we derive $L^\infty$ and $B^1_{\infty,1}$ estimates for the difference of two solutions and use a Bona-Smith type argument to establish continuous dependence on the initial data. We further show that the data-to-solution map $u_0\mapsto S_t(u_0)$ is not uniformly continuous on bounded subsets of $B^s_{p,r}(\mathbb{R}^d)$ for the range of indices covered by our local well-posedness result.

In addition, we establish a blow-up scenario in Besov spaces and derive two sufficient conditions for finite time blow-up of solutions to the multi-dimensional scalar equation (\ref{eq;scalar}). Our first approach is to trace the dynamics of the directional derivative along the characteristics. By establishing a maximum principle and estimating the nonlocal term in terms of the initial oscillation, we derive a sufficient condition for finite time blow-up. The other approach we adopt is inspired by the pioneering work of Constantin and Escher \cite{constantin1998wave}. To this end, we generalize their result on the evolution of extrema to higher dimensions. Then by analyzing the Riccati-type differential inequalities for the extremal values of $\underset{i=1}{\overset{d}{\sum}}\partial_{x_i}u$ and using the decay estimate for the upper extremum (see \eqref{zyM}), we obtain a new blow-up criterion. Our blow-up results improve and extend the earlier work of Wang and Wang \cite{MR3016436}.

Finally, to study the sharpness of our well-posedness result in Besov spaces, we focus on whether this system is well-posed in $B^{1+\frac{d}{p}}_{p,r}(\mathbb{R}^d)$ with $1\leq p\leq\infty$ and $r>1$. Inspired by the works \cite{MR4413296,MR4888641,MR3906228}, we construct a family of special initial data $\{u_0^N\}$ such that each $u_0^N$ satisfies our blow-up condition and $\|u_0^N\|_{B^{1+\frac{d}{p}}_{p,r}}\to0$ as $N\to\infty$, which leads to norm inflation in arbitrarily short time.

The main results of this paper are stated as follows:

\begin{theorem}\label{thm;1}
    Let $s>1+\frac{d}{p}$, $1\leq p,r \leq \infty$ or $s = 1+ \frac{d}{p}$, $r = 1$ with $1 \leq p \leq \infty$. Assume $\alpha$ is a smooth function, $\alpha(0)=0$ and $u_0 \in B^s_{p,r}$, then there exists a time $T>0$ such that the  problem (\ref{eq;scalar}) is well-posed in $E^s_{p,r}$ in the sense of Hadamard, where $E^s_{p,r}$ is defined as
    \begin{align*}
E_{p,r}^s(T) \triangleq
\begin{cases}
C\bigl([0,T]; B_{p,r}^s\bigr) \cap {C}^1\bigl([0,T]; B_{p,r}^{s-1}\bigr)
& \text{if } r < \infty,
\\[6pt]
{C}_w\bigl([0,T]; B_{p,\infty}^s\bigr) \cap {C}^{0,1}\bigl([0,T]; B_{p,\infty}^{s-1}\bigr)
& \text{if } r = \infty.
\end{cases}
\end{align*}
    
\end{theorem}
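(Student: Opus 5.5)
We prove Theorem~\ref{thm;1} by the standard scheme for transport equations with a nonlocal, order-zero forcing term, together with a separate uniqueness and stability argument at the endpoint. The key structural fact is that $H[u]=(I-\Delta)^{-1}\Delta u=-u+(I-\Delta)^{-1}u$ is bounded on every $B^s_{p,r}$. Hence, for all $s\in\mathbb{R}$,
\begin{align*}
\|H[u]\|_{B^s_{p,r}}\le C\|u\|_{B^s_{p,r}},
\end{align*}
so the source term costs no derivatives. The only genuinely nonlinear part is the transport field $\alpha(u)$. Since $\alpha(0)=0$ and $\alpha$ is smooth, Moser-type composition estimates give
\begin{align*}
\|\alpha(u)\|_{B^s_{p,r}}\le C(\|u\|_{L^\infty})\|u\|_{B^s_{p,r}},
\end{align*}
and in the range $s>1+\frac dp$, or $s=1+\frac dp$ with $r=1$, we have $B^{s-1}_{p,r}\hookrightarrow L^\infty$. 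Therefore $\nabla\alpha(u)\in L^\infty$ is controlled by $\|u\|_{B^s_{p,r}}$, which is exactly what the transport estimates in $B^s_{p,r}$ require.

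\textbf{Step 1 (approximation and uniform bounds).} Set $u^0=0$ and define $u^{n+1}$ by solving the linear transport equation
\begin{align*}
\partial_t u^{n+1}+\alpha(u^n)\cdot\nabla u^{n+1}=H[u^n],\qquad u^{n+1}|_{t=0}=S_{n+1}u_0.
\end{align*}
The a priori estimate for transport equations in Besov spaces gives
\begin{align*}
\|u^{n+1}(t)\|_{B^s_{p,r}}\le e^{C\int_0^t\|u^n\|_{B^s_{p,r}}d\tau}\Big(\|u_0\|_{B^s_{p,r}}+C\int_0^t\|u^n\|_{B^s_{p,r}}d\tau\Big).
\end{align*}
By induction, for $T\sim (C\|u_0\|_{B^s_{p,r}})^{-1}$, the sequence $(u^n)$ is bounded in $L^\infty_T(B^s_{p,r})$. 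Using the equation, it is also bounded in $C^{0,1}_T(B^{s-1}_{p,r})$.

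\textbf{Step 2 (existence).} For $p<\infty$, we show $(u^n)$ is Cauchy in $C_T(B^{s-1}_{p,r})$ by applying the transport estimate at regularity $s-1$ to the difference $u^{n+1}-u^n$. At $s=1+\frac dp$, $r=1$, this requires the critical product estimate for $\big(\alpha(u^n)-\alpha(u^{n-1})\big)\cdot\nabla u^n$ in $B^{d/p}_{p,\infty}$, or in a slightly weaker space. That estimate is available for $p<\infty$, following \cite{MR4595933}. For $p=\infty$, i.e.\ $B^1_{\infty,1}$, we instead argue by compactness. The uniform bounds, Arzel\`a--Ascoli and Fatou's property give a subsequence converging in $C_T(B^{s-1}_{\infty,1})_{\rm loc}$ to some $u\in L^\infty_T(B^s_{p,r})$ solving \eqref{eq;scalar}. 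Time continuity in $B^s_{p,r}$ for $r<\infty$, and weak continuity for $r=\infty$, follow from the transport theory once $u$ is known to solve the linear equation with coefficients in the right class.

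\textbf{Step 3 (uniqueness).} For two solutions $u,v$ with difference $w=u-v$,
\begin{align*}
\partial_t w+\alpha(u)\cdot\nabla w=-(\alpha(u)-\alpha(v))\cdot\nabla v+H[w].
\end{align*}
When $s>1+\frac dp$ we estimate $w$ in $B^{s-1}_{p,r}$ and apply Gronwall. In the endpoint case $B^1_{\infty,1}$, the right side contains $w\,\nabla v$ with only $\nabla v\in B^0_{\infty,1}$. So we estimate $w$ in $B^0_{\infty,\infty}$, where $\|w\nabla v\|_{B^0_{\infty,\infty}}$ is not directly controlled. Following the paper's plan, we use the logarithmic interpolation inequality
\begin{align*}
\|w\|_{B^0_{\infty,1}}\lesssim\|w\|_{B^0_{\infty,\infty}}\log\Big(e+\frac{\|w\|_{B^1_{\infty,1}}}{\|w\|_{B^0_{\infty,\infty}}}\Big).
\end{align*}
Combined with the transport estimate in $B^0_{\infty,\infty}$, with its loss of a logarithm, this yields an inequality $X'\le C X\log(e+C/X)$ for $X(t)=\|w(t)\|_{B^0_{\infty,\infty}}$. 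Osgood's lemma then gives $X\equiv0$.

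\textbf{Step 4 (continuous dependence).} This is a Bona--Smith argument, and it is the step we expect to be the main obstacle, especially in $B^1_{\infty,1}$. Let $u^n_0\to u_0$ in $B^s_{p,r}$, and let $u_j$, $u^n_j$ denote the solutions with mollified data $S_j u_0$, $S_ju^n_0$. The smoothed solutions enjoy $B^{s+1}_{p,r}$ bounds of size $2^j$. The differences satisfy, in the weaker norm ($B^{s-1}_{p,r}$, or $L^\infty$ at the endpoint),
\begin{align*}
\|u_j-u\|\lesssim 2^{-j}\|S_ju_0-u_0\|_{B^s_{p,r}}.
\end{align*}
Combining the two bounds via the $B^s_{p,r}$ estimate on the difference, following \cite{MR3926133}'s $L^\infty$ and $B^1_{\infty,1}$ difference estimates, gives
\begin{align*}
\|u_j-u\|_{L^\infty_T(B^s_{p,r})}\to0\quad\text{uniformly in }n.
\end{align*}
Then $\|u^n-u\|_{B^s}\le\|u^n-u^n_j\|+\|u^n_j-u_j\|+\|u_j-u\|$, where the middle term tends to $0$ for fixed $j$ by smooth stability. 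The difficulty at the endpoint is that the difference estimate in $B^1_{\infty,1}$ involves $\|w\|_{L^\infty}\|\nabla u_j\|_{B^1_{\infty,1}}$. This must be balanced using the $2^{-j}$ gain in $L^\infty$ against the $2^{j}$ loss, which requires a careful $L^\infty$ (not $B^0_{\infty,\infty}$) difference bound. We would try to derive that bound first along characteristics, using the boundedness of $H$ on $L^\infty$, since $(I-\Delta)^{-1}$ has an integrable kernel.
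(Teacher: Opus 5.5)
Your outline is correct. At the endpoint $B^1_{\infty,1}$ it is essentially the paper's argument:
\begin{itemize}
\item existence by compactness;
\item uniqueness by estimating $w$ in $B^0_{\infty,\infty}$ and closing with the logarithmic interpolation inequality and Osgood's lemma;
\item continuous dependence by Bona--Smith, using an $L^\infty$ bound and a $B^1_{\infty,1}$ bound for the difference, with the $2^{-j}$ gain in $L^\infty$ offsetting the $2^{j}$ growth of $\|\nabla u_j\|_{B^1_{\infty,1}}$.
\end{itemize}

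Off the endpoint your route differs. The paper proves existence by compactness in every case. For $p<\infty$ it proves continuous dependence by splitting $\partial_{x_i}u^n=w^n_i+z^n_i$: $w^n_i$ carries the data and forcing of the limit solution and converges by Lemma~\ref{lem;6}, while $z^n_i$ is removed by Gronwall. Bona--Smith is used only at $B^1_{\infty,1}$.

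You instead show the scheme is Cauchy in $C_T(B^{s-1}_{p,r})$. This relies on the algebra property and the transport estimate at regularity $s-1$, which are exactly what is lost in $B^0_{\infty,1}$. In the critical case $s=1+\frac dp$, $r=1$, $p<\infty$, this closes directly in the algebra $B^{d/p}_{p,1}$, so you need no detour through $B^{d/p}_{p,\infty}$. You then run Bona--Smith uniformly for every $r<\infty$.

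What each route buys:
\begin{itemize}
\item Yours gives a single stability mechanism, needing only persistence of $B^{s+1}_{p,r}$ regularity and the weak-norm Lipschitz bound.
\item Yours also avoids Lemma~\ref{lem;6}, which is stated for $B^{d/p}_{p,1}$ but which the paper applies in $B^{s-1}_{p,r}$.
\item The paper's splitting never needs smoothed solutions with growing higher norms.
\end{itemize}
Your plan to derive the $L^\infty$ difference bound along characteristics, via $\|H[w]\|_{L^\infty}\le 2\|w\|_{L^\infty}$, is also more robust than the paper's. The paper uses an $L^p$ energy estimate with $p\to\infty$, which tacitly requires $w_0\in L^p$; this need not hold for $B^1_{\infty,1}$ data.

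Two points need tightening; the paper glosses over both.

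First, in the compactness step, a convergent subsequence $u^{n_k}\to u$ does not let you pass to the limit in $\partial_tu^{n+1}+\alpha(u^n)\cdot\nabla u^{n+1}=H[u^n]$ unless $u^{n_k+1}$ has the same limit. Your characteristic estimate settles this. Applied to $u^{n+1}-u^n$, it gives
\begin{align*}
\|(u^{n+1}-u^n)(t)\|_{L^\infty}\le\|\Delta_nu_0\|_{L^\infty}+C\int_0^t\|(u^n-u^{n-1})(\tau)\|_{L^\infty}\,d\tau,
\end{align*}
where $C$ depends on the uniform bound of $u^n$ in $L^\infty_T(W^{1,\infty})$. Hence the whole sequence is Cauchy in $C_T(L^\infty)$.

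Second, Bona--Smith needs $\|S_ju_0-u_0\|_{B^s_{p,r}}\to0$, which fails for $r=\infty$. In that case the appropriate statement is continuity of the data-to-solution map into $C_T(B^{s'}_{p,\infty})$ for $s'<s$. This follows from your Lipschitz bound in $B^{s-1}_{p,\infty}$ and interpolation, and you should state it explicitly.
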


\begin{theorem}\label{thm;un}
    Let $(s,p,r)$ satisfy the condition
    \begin{align}\label{cond;un}
       s>1+\frac{d}{p}, 1\leq p,r \leq \infty \ \text{or}\ s = 1+ \frac{d}{p}, r = 1 \ \text{with}\ 1\leq p \leq \infty.
    \end{align}
    Assume $\alpha$ is a smooth function, $\alpha(0)=0$ and $\alpha\not\equiv
0$. Then the data-to-solution map $u_0\mapsto S_t(u_0)$ for the Cauchy problem (\ref{eq;scalar}) is not uniformly continuous on bounded subsets of
$B^s_{p,r}(\mathbb{R}^d)$. More precisely, there exist $T_u>0$, $f_n$, $g_n$and $\kappa_0>0$ such that the
sequences satisfy
\begin{equation}\label{eq:initsep}
\|{(f_n+g_n)-f_n}\|_{B^s_{p,r}}=\|{g_n}\|_{B^s_{p,r}}\longrightarrow0,
\end{equation}
while
\begin{equation}\label{eq:timesep}
\lim_{n\to\infty}\inf\|{S_t(f_n+g_n)-S_t(f_n)}\|_{B^s_{p,r}}\ge\kappa_0\,t
\qquad\text{for every }t\in[0,T_u].
\end{equation}
\end{theorem}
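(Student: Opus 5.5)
We follow the standard construction for Burgers-type transport equations (as in Li–Yu–Zhu and related works). Since $\alpha(0)=0$ and $\alpha\not\equiv0$, after a linear change of coordinates we may fix a direction $e\in\mathbb{S}^{d-1}$ and a point $c_0$ (taken to be $c_0=0$ after shifting $\alpha$ by a constant if needed) such that $\alpha'(0)\cdot e\neq 0$. If $\alpha'(0)=0$, we instead pick $u$-values where $\alpha'\neq0$ and work with a background constant; this only shifts the analysis.

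\textbf{Choice of data.} Fix a Schwartz function $\phi$ with $\hat\phi$ supported in a small ball, and a cut-off $\tilde\phi$ with $\tilde\phi\equiv1$ near the support of $\phi$. Set
\begin{align*}
f_n(x)=2^{-ns}\phi(x)\sin\bigl(\tfrac{17}{12}2^{n}x\cdot e\bigr),\qquad g_n(x)=2^{-n}\tilde\phi(x).
\end{align*}
Then $\|f_n\|_{B^s_{p,r}}\lesssim1$, and $\|g_n\|_{B^s_{p,r}}\lesssim 2^{-n}\to0$, which gives \eqref{eq:initsep}. Moreover $\|f_n\|_{B^{s+\sigma}_{p,r}}\lesssim 2^{n\sigma}$ for $\sigma\ge -1$.

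\textbf{Step 1: approximation of $S_t(f_n)$.} Using the local theory of Theorem~\ref{thm;1}, there is a uniform lifespan $T_u$ on which the solutions stay bounded in $B^s_{p,r}$, with higher-norm bounds $\|S_t(f_n)\|_{B^{s+1}_{p,r}}\lesssim 2^n$. We then show $\|S_t(f_n)-f_n\|_{B^s_{p,r}}\lesssim t\,2^{-n\epsilon}$, or at least $o(1)+Ct^2$. To do this, set $w=S_t(f_n)-f_n$, which solves a transport equation whose forcing is $-\alpha(f_n)\cdot\nabla f_n+H[f_n]$. Since $f_n$ is tiny in $L^\infty$ and Lipschitz-small when $s>1+d/p$, this forcing is small in $B^s$; in the endpoint case the smallness comes from the extra $2^{-n}$ factors. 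We estimate $w$ in $B^{s-1}_{p,r}$ and $B^{s+1}_{p,r}$ and interpolate.

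\textbf{Step 2: the key step, $S_t(f_n+g_n)$.} Define $u_n=S_t(f_n+g_n)$ and compare it with the approximate solution
\begin{align*}
v_n=f_n+g_n-t\,\alpha(g_n)\cdot\nabla f_n.
\end{align*}
Writing $\delta=u_n-v_n$ (error in the text) and estimating it in $B^{s-1}_{p,r}$ gives $\|\delta\|_{B^{s-1}}\lesssim t^2 2^{-n}+2^{-n(1+\epsilon)}$. The $B^{s+1}$ norm of $\delta$ grows at most like $2^n$, so interpolation yields $\|\delta\|_{B^s}\lesssim t^2+o(1)$. The main term satisfies
\begin{align*}
\alpha(g_n)\cdot\nabla f_n\approx 2^{-n}\alpha'(0)\tilde\phi\,\cdot\nabla f_n,
\end{align*}
and its $B^s_{p,r}$ norm is bounded below by $c\,|\alpha'(0)\cdot e|$, using the frequency localization of $\nabla f_n$ near $2^n$ and $\tilde\phi\phi=\phi$. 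This is the main obstacle. It requires careful commutator and product estimates so that the nonlinear remainder of $\alpha(u)-\alpha(0)-\alpha'(0)u$ and the nonlocal term $H$ (which is of order zero and therefore harmless) contribute only $o(1)$. It also requires handling the endpoint $s=1+d/p$, $r=1$, where one of the interpolation exponents degenerates; there we would use $B^{s-1}$ and $B^{s+1}$ estimates combined with the logarithmic-free interpolation $B^s_{p,1}\subset[B^{s-1}_{p,\infty},B^{s+1}_{p,\infty}]_{1/2}$.

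\textbf{Conclusion.} Combining the two steps,
\begin{align*}
\|S_t(f_n+g_n)-S_t(f_n)\|_{B^s_{p,r}}\ge \|g_n\|\!-\!\!\;\; \ldots \ge c\,t-Ct^2-o(1).
\end{align*}
Shrinking $T_u$ so that $CT_u\le c/2$ then gives \eqref{eq:timesep} with $\kappa_0=c/2$.
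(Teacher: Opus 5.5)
Your overall architecture is the paper's: high-frequency $f_n$, low-frequency $g_n$, a first-order-in-time expansion, and separation coming from $t\,\alpha(g_n)\cdot\nabla f_n$. But Steps 1 and 2 as stated are false, because you discard the nonlocal term.

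\textbf{The nonlocal term is not small.} The operator $H=(I-\Delta)^{-1}\Delta=-I+(I-\Delta)^{-1}$ has order zero, and for exactly that reason it is \emph{not} small on $f_n$. On the frequency support of $f_n$ ($|\xi|\approx\frac{17}{12}2^n$) its symbol is $-1+O(2^{-2n})$. Hence $H[f_n]=-f_n+(I-\Delta)^{-1}f_n$ with $\|(I-\Delta)^{-1}f_n\|_{B^s_{p,r}}\lesssim 2^{-2n}$, so $\|H[f_n]\|_{B^s_{p,r}}\approx\|f_n\|_{B^s_{p,r}}\approx 1$.

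Consequently $S_t(f_n)-f_n$ contains $tH[f_n]$, which has size $\approx t$ in $B^s_{p,r}$. Your Step 1 bound ($t2^{-n\epsilon}$, or $o(1)+Ct^2$) therefore fails. Likewise $u_n-v_n$ contains $tH[f_n+g_n]$, again of size $\approx t$. These errors are of the same order as your main term $t\|\alpha(g_n)\cdot\nabla f_n\|_{B^s_{p,r}}$, so the final triangle inequality yields nothing.

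The repair is the paper's Proposition~\ref{prop;4.1}. Approximate $S_t(u_0)$ by $u_0+tV(u_0)$, with $V(u_0)=H[u_0]-\alpha(u_0)\cdot\nabla u_0$, for both $u_0=f_n$ and $u_0=f_n+g_n$. In the difference, the nonlocal parts collapse to $tH[g_n]=O(t2^{-n})$. Only $t\bigl(\alpha(f_n+g_n)\cdot\nabla(f_n+g_n)-\alpha(f_n)\cdot\nabla f_n\bigr)$ survives, and that term is bounded below as in Corollary~\ref{cor;4.1}.

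\textbf{The interpolation step loses the $t^2$.} Even with the correct expansion, from $\|\delta\|_{B^{s-1}_{p,r}}\lesssim t^2 2^{-n}$ and $\|\delta\|_{B^{s+1}_{p,r}}\lesssim 2^n$ you only get $\|\delta\|_{B^s_{p,r}}\lesssim (t^22^{-n})^{1/2}(2^n)^{1/2}=t$. That is again the order of the main term. You would need either $\|\delta\|_{B^{s+1}_{p,r}}\lesssim t^2 2^n$, or, as in the paper, a direct $B^s_{p,r}$ transport estimate for the error into which the low-norm bound is fed. In that estimate the critical forcing term $(\alpha(u_0)-\alpha(S_t(u_0)))\cdot\nabla u_0$ is controlled by $\|S_t(u_0)-u_0\|_{L^\infty}\|\nabla u_0\|_{B^s_{p,r}}\lesssim t2^{-n}\cdot 2^n$, which integrates to $Ct^2$.

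\textbf{The case $\alpha'(0)=0$.} Your reduction via a background constant is unavailable when $p<\infty$, since nonzero constants do not lie in $B^s_{p,r}$. The lower bound genuinely needs a nondegeneracy such as $\alpha'(0)\cdot e\neq 0$. The paper's Corollary~\ref{cor;4.1} also relies on this implicitly when it replaces $(\alpha(f_n+g_n)-\alpha(f_n))\cdot\nabla f_n$ by $g_n\cdot\nabla f_n$.
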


In the analysis of blow-up behaviors, we impose the following assumptions that $f_1=\dots=f_d$ with $f''_i\geq\alpha_0>0$.

\begin{theorem}\label{thm;blowup}
Consider the Cauchy problem (\ref{eq;scalar}) with initial data $u_0 \in H^s(\mathbb{R}^d)$ with $s>1+\frac{d}{2}$. Let $T$ be the maximal existence time of the unique solution $u$. Assume there exists a point $x_0 \in \mathbb{R}^d$ such that
    \begin{align}\label{cond;1.4}
        \sum_{i=1}^{d}\partial_{x_i}u_0(x_0)< \frac{-1-\sqrt{1+4\alpha_0k}}{2\alpha_0},
    \end{align}
where $\delta = \underset{x\in \mathbb{R}^d}{\sup}u_0(x)-\underset{x\in \mathbb{R}^d}{\inf}u_0(x) $,   $k:=\min\{\frac{d}{2}\delta,b_0\}$ and $b_0 := \underset{x\in \mathbb{R}^d}{\sup} \underset{i=1}{\overset{d}{\sum}}\partial_{x_i}u_0(x)$. Then the corresponding solution $u$ blows up in finite time with maximal existence time $T$. The blow-up time $T$ can be estimated as
 \begin{align*}
     T\leq \frac{\log R}{\sqrt{1+4\alpha_0k}}, \ \text{with} \ R:=\frac{-1+\sqrt{1+4\alpha_0k}-2\alpha_0\underset{i=1}{\overset{d}{\sum}}\partial_{x_i}u_0(x_0)}{-1-\sqrt{1+4\alpha_0k}-2\alpha_0\underset{i=1}{\overset{d}{\sum}}\partial_{x_i}u_0(x_0)}.
 \end{align*}
 \end{theorem}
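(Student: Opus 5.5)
The plan is to work with $w:=\sum_{i=1}^d\partial_{x_i}u$ and follow it along characteristics, reducing the problem to a Riccati inequality. Since $f_1=\dots=f_d=:f$, the equation (\ref{eq;scalar}) reads $u_t+f'(u)\,w=H[u]$. Differentiating in $x_j$ and summing over $j$ gives
\begin{align*}
w_t+f'(u)\sum_{i=1}^d\partial_{x_i}w+f''(u)\,w^2=H[w].
\end{align*}
We write $H=(I-\Delta)^{-1}-I$, so that $H[w]=G*w-w$, where $G$ is the (positive, integrable, $\int G=1$) Bessel kernel of $(I-\Delta)^{-1}$. Introduce the characteristic $\dot X(t,x)=f'(u(t,X))(1,\dots,1)$ with $X(0)=x_0$. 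Since $u\in C([0,T);H^s)$ with $s>1+\frac d2$, the map $X(t,\cdot)$ is a $C^1$ diffeomorphism. Along this curve, $W(t)=w(t,X(t))$ satisfies
\begin{align*}
\dot W=-f''(u)W^2-W+(G*w)(t,X(t)).
\end{align*}
The whole argument therefore reduces to the bound $G*w\le k$ for all $t$, with $k=\min\{\frac d2\delta,b_0\}$.

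\textbf{Step 1 (maximum principle for $u$).} From $u_t+f'(u)w=G*u-u$ and $G*u\le \sup u$, a Constantin--Escher type argument (generalised to $\mathbb{R}^d$) shows the following. The function $\overline{m}(t)=\sup_x u(t,x)$ is Lipschitz, and a.e. $\overline m'(t)=(G*u-u)(t,\xi(t))\le 0$ at a maximum point $\xi(t)$. Here one uses decay at infinity of $H^s$ functions, so that the extrema are attained when they are nontrivial. Symmetrically, $\inf u$ is nondecreasing. Hence $\sup u(t)-\inf u(t)\le\delta$.

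\textbf{Step 2 (first bound, $\frac d2\delta$).} We have $G*w=\sum_i(\partial_iG)*u$. Because $\partial_iG$ is odd in $y_i$,
\begin{align*}
(\partial_iG)*u(x)=\int_{y_i>0}\partial_iG(y)\bigl(u(x-y)-u(x-\tilde y)\bigr)\,dy,
\end{align*}
where $\tilde y$ denotes $y$ with $y_i$ reflected. This gives $|\partial_iG*u|\le\frac12\|\partial_iG\|_{L^1}\,\delta$. Moreover $\|\partial_iG\|_{L^1}=2\int_{\mathbb{R}^{d-1}}G(y',0)\,dy'$, which equals twice the one-dimensional kernel $\frac12e^{-|x|}$ at $0$, i.e. equals $1$. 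Thus $G*w\le\frac d2\delta$.

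\textbf{Step 3 (second bound, $b_0$).} The same extremum lemma applied to $w$ gives that $M(t)=\sup_x w(t,x)$ is nonincreasing. Indeed, at a maximum point $\xi(t)$,
\begin{align*}
M'(t)=-f''(u)M^2+G*w-M\le -\alpha_0M^2\le0.
\end{align*}
This is the decay estimate of the upper extremum. Hence $G*w\le M(t)\le b_0$, and combining with Step 2, $G*w\le k$.

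\textbf{Step 4 (Riccati comparison).} Using $f''\ge\alpha_0$, we obtain $\dot W\le-\alpha_0W^2-W+k$. The roots of $\alpha_0W^2+W-k=0$ are $w_\pm=\frac{-1\pm\sqrt{1+4\alpha_0k}}{2\alpha_0}$. Condition (\ref{cond;1.4}) says $W(0)<w_-$, so $W$ stays below $w_-$ and decreases. Comparison with the explicit solution of $\dot Y=-\alpha_0(Y-w_+)(Y-w_-)$, namely
\begin{align*}
\frac{Y-w_+}{Y-w_-}=R\,e^{-\sqrt{1+4\alpha_0k}\,t},
\end{align*}
shows that $W\to-\infty$ no later than $t=\log R/\sqrt{1+4\alpha_0k}$. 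Since $\|\nabla u\|_{L^\infty}$ is then unbounded, the blow-up scenario (the criterion that the $H^s$ solution persists as long as $\int_0^T\|\nabla u\|_{L^\infty}<\infty$) implies $T\le\log R/\sqrt{1+4\alpha_0k}$.

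The main obstacle is the rigorous extremum lemma in $d$ dimensions (Steps 1 and 3). One must prove that $\sup_x w(t,\cdot)$ is attained or otherwise handled (via decay at infinity; when $\sup w\le0$ the bound is trivial) and that it is a.e. differentiable with derivative equal to $w_t$ at a maximizer. I would adapt Constantin--Escher's proof by first working with smooth $H^{s+2}$ data and then using density together with continuous dependence.
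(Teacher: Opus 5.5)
Your proposal is correct and follows essentially the same route as the paper. Both arguments use the maximum principle $\inf u_0\le u\le\sup u_0$, the reflection estimate $|\partial_{x_i}(K*u)|\le\delta/2$ and the bound $\sup_x\sum_i\partial_{x_i}u(t,\cdot)\le b_0$ to get $K*\sum_i\partial_{x_i}u\le k$, and then the same Riccati comparison along characteristics to obtain the stated estimate on $T$. The one technical difference is how you get the two maximum principles: you use a Constantin--Escher extremum lemma (which the paper uses only for Theorem~\ref{thm;Mm}), whereas the paper integrates along characteristics and applies Gronwall's inequality to $\inf u$, $\sup u$ and $\sup\sum_i\partial_{x_i}u$. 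Your explicit computation $\|\partial_{x_i}K\|_{L^1}=1$ and your density/regularity remark fill in details the paper leaves implicit.
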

 \begin{remark}
     Theorem 1.1 in \cite{MR3016436} provides a sufficient condition for finite-time blow-up of the solution to the Cauchy problem, expressed in terms of the initial data $u_0$:
     \begin{align}\label{cond;2}
         \sum_{i=1}^d \partial_{x_i} u_0(x_0) < \omega_*,
     \end{align}
     where
     \begin{align*}
         \omega_* = \frac{-1-\sqrt{1+4\alpha_0\max\{0,B_1\}}}{2\alpha_0},\  B_1=\sup_{x\in\mathbb{R}^d}\sum_{i=1}^d \partial_{x_i} u_0(x).
     \end{align*}
     Comparing (\ref{cond;1.4}) with (\ref{cond;2}) directly, it is easy to see that
     \begin{align*}
         \omega_* \leq \frac{-1-\sqrt{1+4\alpha_0k}}{2\alpha_0}.
     \end{align*}
     Hence, our blow-up result improves the corresponding result in \cite{MR3016436}.
 \end{remark}

\begin{theorem}\label{thm;Mm}
    Consider the Cauchy problem (\ref{eq;scalar}) with initial data $u_0\in H^s(\mathbb R^d)$ with $s>1+\frac d2$.
Let $T$ be the maximal existence time of the unique solution $u$. Define $m(t):=\underset{x\in\mathbb R^d}{\inf}\underset{i=1}{\overset{d}{\sum}}u_{x_i}(t,x),\
M(t):=\underset{x\in\mathbb R^d}{\sup}\underset{i=1}{\overset{d}{\sum}}u_{x_i}(t,x).$ Assume that $m(0)$ and $M(0)$ satisfy
\begin{align}  \label{cond;bu}
m(0)<-\frac{3+\sqrt{1+8\alpha_0M(0)}}{4\alpha_0}.
\end{align}
Then the corresponding solution $u$ blows up in finite time with maximal existence time $T$.
The blow-up time $T$ can be estimated as
\begin{align*}
T\leq
\log \left(
1-\frac{4}{4\alpha_0 m(0)+3+\sqrt{1+8\alpha_0 M(0)}}\right)
.
\end{align*}
\end{theorem}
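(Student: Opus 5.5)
Write $f_1=\dots=f_d=:f$, so that $\alpha(u)=f'(u)(1,\dots,1)$. Set $w:=\sum_{i=1}^d u_{x_i}$. The plan is to derive a scalar transport equation for $w$ along characteristics. We then control the nonlocal term by the extrema of $w$ itself, and close with a Riccati comparison for $m(t)$, using the monotonicity of $M(t)$.

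\textbf{Step 1: equation for $w$.} Applying $\sum_i\partial_{x_i}$ to (\ref{eq;scalar}) gives
\begin{align*}
w_t+f'(u)\sum_{j=1}^d w_{x_j}=-f''(u)\,w^2+H[w].
\end{align*}
Since $H=(I-\Delta)^{-1}\Delta=-I+(I-\Delta)^{-1}$ and $(I-\Delta)^{-1}$ is convolution with the Bessel kernel $G\ge0$, $\int G=1$, we get $H[w]=-w+G*w$. Moreover $m(t)\le G*w\le M(t)$ pointwise. Along the characteristics $\dot X=f'(u(t,X))(1,\dots,1)$, which are well defined for $H^s$ solutions with $s>1+\frac d2$, this reads
\begin{align*}
\tfrac{d}{dt}w(t,X)=-f''(u)w^2-w+G*w .
\end{align*}

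\textbf{Step 2: evolution of extrema.} I would generalize the Constantin--Escher lemma to $\mathbb R^d$. Since $w(t)\in H^{s-1}\subset C_0$ and $w$ has zero mean, we have $m(t)\le0\le M(t)$. The extrema are attained, and $m,M$ are locally Lipschitz with $m'(t)=w_t(t,\xi(t))$ for a.e. $t$, at a point $\xi(t)$ where the minimum is attained. At extremal points $\nabla w=0$, so the transport term vanishes. Using $f''\ge\alpha_0$, $M\ge 0$ and $G*w\le M$, we get
\begin{align*}
M'\le-\alpha_0M^2-M+M\le0,\qquad\text{so } M(t)\le M(0).
\end{align*}
This is the decay estimate \eqref{zyM}. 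For the minimum,
\begin{align*}
m'(t)\le-\alpha_0m^2-m+M(0).
\end{align*}

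\textbf{Step 3: Riccati comparison.} The roots of $\alpha_0y^2+y-M(0)=0$ are $y_\pm=\frac{-1\pm\sqrt{1+4\alpha_0M(0)}}{2\alpha_0}$. I will show that condition (\ref{cond;bu}) implies $m(0)<y_-$ with a quantitative margin. Then $m$ stays below $y_-$ and is decreasing, so $m'\le-\alpha_0(m-y_-)(m-y_+)$. Integrating, $m(t)\to-\infty$ in finite time. Since $\|\nabla u\|_{L^\infty}\ge |m|/d$, the blow-up criterion from the blow-up scenario (the $H^s$ solution persists as long as $\|\nabla u\|_{L^\infty}$ stays bounded) shows $T<\infty$.

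\textbf{Main obstacle: matching the stated constants.} The statement's specific form $4\alpha_0m(0)+3+\sqrt{1+8\alpha_0M(0)}<0$, together with a time bound that is a plain logarithm without a factor $\alpha_0$, suggests the authors split the quadratic. For instance, absorbing $-m$ via $-m\le \frac{\alpha_0}{2}m^2+\frac1{2\alpha_0}$ would give an inequality $m'\le-\frac{\alpha_0}{2}m^2+c$ with explicit constants. Alternatively they may use a linear lower bound $\alpha_0m^2+m-M(0)\ge \beta(m-\gamma)^2$ or $\ge -(m-\gamma)\cdot(\dots)$ so that integration yields $\log\bigl(1-\frac{4}{4\alpha_0m(0)+3+\sqrt{1+8\alpha_0M(0)}}\bigr)$.

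I would try the splitting $y=\frac{-3-\sqrt{1+8\alpha_0M(0)}}{4\alpha_0}$ as the threshold. I would check that $\alpha_0y^2+y-M(0)\ge \alpha_0(y-y_*)^2/2$, or a comparable bound, holds below it, then solve the resulting Bernoulli/Riccati inequality explicitly. Either way the qualitative blow-up follows from Step 3. Only the exact numerical form of the time bound depends on this bookkeeping.
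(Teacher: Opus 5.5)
There is a genuine gap in Step 3: condition (\ref{cond;bu}) does \emph{not} imply $m(0)<y_-$. Put $x:=\alpha_0M(0)\ge0$. Squaring twice shows that $\frac{3+\sqrt{1+8x}}{4}\ge\frac{1+\sqrt{1+4x}}{2}$ is equivalent to $0\ge16x^2$. Hence for $M(0)>0$ the threshold in (\ref{cond;bu}) lies strictly \emph{above} $y_-$. For example, with $\alpha_0M(0)=1$ the theorem's threshold is $-\frac{3}{2\alpha_0}$, while $y_-=-\frac{1+\sqrt5}{2\alpha_0}\approx-\frac{1.618}{\alpha_0}$. The paper's own example $m(0)=-M(0)=-\frac{7}{4\alpha_0}$ satisfies (\ref{cond;bu}), yet $m(0)>y_-=-\frac{1+2\sqrt2}{2\alpha_0}$. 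For such data the right-hand side of $m'\le-\alpha_0m^2-m+M(0)$ is positive at $m(0)$, and the comparison solution increases to $y_+$, so your inequality gives no blow-up. In fact $m(0)<y_-$ is exactly the earlier criterion (\ref{cond;2}) with $B_1=M(0)$, so your argument reproduces the old result, not the improved one. Since $y_-$ is the sharp threshold for the autonomous inequality with constant source $M(0)$, no splitting of that quadratic can reach (\ref{cond;bu}). Your ``bookkeeping'' alternatives fail for the same reason: on $(y_-,\text{threshold})$ one has $\alpha_0y^2+y-M(0)<0$. So what is wrong is the qualitative conclusion itself, not just the constants.

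The missing ingredient is already in your Step 2. You derived $M'\le-\alpha_0M^2$ but kept only $M'\le0$. Integrating it gives the decay $M(t)\le\frac{M(0)}{1+\alpha_0M(0)t}$, which is what \eqref{zyM} actually says. This yields $m'\le-\alpha_0m^2-m+\frac{M(0)}{1+\alpha_0M(0)t}$, with a source that decays in time. The paper exploits this through a time-dependent barrier
\[
a(t)=\tfrac34+\tfrac14\sqrt{1+\tfrac{8\alpha_0M(0)}{1+\alpha_0M(0)t}},
\]
which satisfies $(a-1)(2a-1)=\frac{\alpha_0M(0)}{1+\alpha_0M(0)t}$ and $a'=-\frac{(a-1)^2(2a-1)^2}{4a-3}$. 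For $\psi:=-\alpha_0m-a$ one obtains $\psi'\ge\psi^2+(2a-1)\psi+\frac{4(a-1)^4}{4a-3}$, and this is at least $\psi^2+\psi$ while $\psi>0$. Now $\psi(0)>0$ is precisely (\ref{cond;bu}), so $\psi$ stays positive. Comparison with $y'=y^2+y$ then forces blow-up before $\log\bigl(1+1/\psi(0)\bigr)$, which is exactly the stated bound. This also explains why that bound has no $\alpha_0$ prefactor.
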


\begin{remark}
Fix \(\alpha_0>0\),
\(d\ge2\), and \(h>0\), and consider
\begin{align*}
    u_0(x)
    =
    \frac{7h}{4\alpha_0}
    \sqrt{\frac{\mathrm{e}}{2d}}\,
    \mathrm{e}^{-|x|^2/h^2},
    \qquad x\in\mathbb{R}^d.
\end{align*}
Then \(u_0\in\mathcal{S}(\mathbb{R}^d)\subset H^s(\mathbb{R}^d)\), and
\begin{align*}
    \sum_{i=1}^{d}\partial_{x_i}u_0(x)
    =
    -\frac{7}{2\alpha_0h}
    \sqrt{\frac{\mathrm{e}}{2d}}\,
    \left(\sum_{i=1}^{d}x_i\right)\mathrm{e}^{-|x|^2/h^2}.
\end{align*}
Using the fact that
\begin{align*}
\left|\sum_{i=1}^{d}x_i\right|
\mathrm{e}^{-|x|^2/h^2}
\le
\sqrt{d}\,|x|\mathrm{e}^{-|x|^2/h^2}
\le
h\sqrt{\frac{d}{2\mathrm{e}}},
\end{align*}
we obtain
\begin{align*}
    m(0)=-\frac{7}{4\alpha_0},
    \qquad
    M(0)=\frac{7}{4\alpha_0}.
\end{align*}
Consequently,
\begin{align*}
    m(0)
    =
    -\frac{7}{4\alpha_0}
    <
    -\frac{3+\sqrt{1+8\alpha_0M(0)}}{4\alpha_0}
    .
\end{align*}
 Hence the blow-up condition in Theorem \ref{thm;Mm} is satisfied.

On the other hand, for the criterion \eqref{cond;2}, we have
\begin{align*}
    B_1=M(0)=\frac{7}{4\alpha_0},\
    \omega_*
    =
    -\frac{1+2\sqrt{2}}{2\alpha_0}.
\end{align*}
Since
\begin{align*}
-\frac{7}{4\alpha_0}
>
-\frac{1+2\sqrt{2}}{2\alpha_0}
=\omega_*,
\end{align*}
it follows that
\begin{align*}
\sum_{i=1}^{d}\partial_{x_i}u_0(x)>\omega_*
\qquad
\text{for all }x\in\mathbb{R}^d.
\end{align*}
Therefore, condition (\ref{cond;2}) fails everywhere. This example shows that our blow-up criterion enlarges the class of initial data covered by the result in
\cite{MR3016436}. 
\end{remark}

\begin{theorem}\label{thm;6}
Let  $1\leq p\leq\infty$, $1<r\leq\infty$, and assume that
$f_1=\dots=f_d$ with $f_i''\geq\alpha_0>0$. Then the Cauchy problem
\eqref{eq;scalar} is ill-posed in $B^{1+\frac{d}{p}}_{p,r}(\mathbb{R}^d)$ in the sense
of norm inflation: for any $N\in\mathbb{N}$ large enough there exists
$u_0\in H^{\infty}(\mathbb{R}^d)\cap B^{1+\frac{d}{p}}_{p,r}(\mathbb{R}^d)$ such that the following hold:
\begin{enumerate}
    \item[(1)] $\displaystyle
    \|u_0\|_{B^{1+\frac{d}{p}}_{p,r}}\leq\frac{1}{\log N}
    \;\longrightarrow\;0$ \quad as $N\to\infty$;
    \item[(2)] the corresponding solution
    $u\in C\big([0,T);H^{\infty}(\mathbb{R}^d)\big)$ of \eqref{eq;scalar} is
    unique, and its maximal existence time $T$ satisfies
    \begin{align*}
        0<T\leq\frac{1}{\log N};
    \end{align*}
    \item[(3)] $\displaystyle
    \|u\|_{L^{\infty}([0,T);B^{1+\frac{d}{p}}_{p,r})}=+\infty$, i.e.\
    $\underset{t\rightarrow T}{\lim}\sup\|u(t)\|_{B^{1+\frac{d}{p}}_{p,r}}=+\infty$; in particular,
    for every $N$ large enough there exists $t_N\in(0,T)$, hence
    $t_N<T\leq\frac{1}{\log N}$, with
    $\|u(t_N)\|_{B^{1+\frac{d}{p}}_{p,r}}\geq\log N$.
\end{enumerate}

\end{theorem}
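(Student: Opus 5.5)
The plan is to build a family of smooth, compactly localized data $u_0^N$ that is small in the critical norm $B^{1+\frac dp}_{p,r}$ but satisfies the blow-up criterion of Theorem \ref{thm;blowup} with a very negative divergence at one point. Theorem \ref{thm;blowup} then forces blow-up in a time of order $1/|\sum_i\partial_{x_i}u_0^N(x_0)|$. The blow-up itself yields the norm inflation.

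\textbf{Construction.} Fix a Schwartz function $\phi$ whose Fourier transform is supported in an annulus, with $\sum_i\partial_{x_i}\phi(0)\neq0$. I would rather take $\phi$ with $\hat\phi$ supported near a direction $\xi_0$ with $\sum_i(\xi_0)_i\neq0$, so that each dyadic piece sits in a single Littlewood--Paley block. Then set
\begin{align*}
u_0^N(x)=\frac{A}{\log N}\sum_{j=N}^{2N}\frac{1}{j^{\beta}}\,2^{-j}\phi(2^jx)\cdot(\pm1),
\end{align*}
with the sign chosen so that $\sum_i\partial_{x_i}u_0^N(0)$ is negative. Each term has $B^{1+\frac dp}_{p,r}$-block norm $\sim j^{-\beta}(\log N)^{-1}$, since $\|2^{-j}\phi(2^j\cdot)\|_{L^p}\,2^{j(1+d/p)}\sim1$. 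Hence
\begin{align*}
\|u_0^N\|_{B^{1+\frac dp}_{p,r}}\lesssim\frac{A}{\log N}\Big(\sum_{j=N}^{2N} j^{-\beta r}\Big)^{1/r}.
\end{align*}
Choosing $\beta=1/r$ (or simply $\beta=0$ plus a small prefactor) makes this $\lesssim A N^{1/r-\beta}\ldots$. With $\beta=1$ and $r>1$ the sum is bounded, so after adjusting $A$ the norm is $\le1/\log N$.

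\textbf{The divergence at $0$.} Each term contributes $\sum_i\partial_i\phi(0)\,j^{-\beta}$, with no $2^j$ loss, because the $2^{-j}$ amplitude cancels the derivative scaling. So
\begin{align*}
\Big|\sum_i\partial_{x_i}u_0^N(0)\Big|\sim\frac{A}{\log N}\sum_{j=N}^{2N}j^{-\beta}.
\end{align*}
With $\beta=1/r<1$ this is $\sim A N^{1-1/r}/\log N$, which tends to infinity precisely because $r>1$. In that case the $\ell^r$ sum gives norm $\sim A(\log 2)^{1/r}/\log N$, so I choose $\beta=1/r$. The quantities $\delta$ and $b_0$ entering $k$ in Theorem \ref{thm;blowup} are bounded independently of $N$, since $\|u_0^N\|_{L^\infty}$ is tiny and $b_0\lesssim\|\nabla u_0^N\|_{L^\infty}$ is at most of the same size as the negative value. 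The $\frac d2\delta$ part of $k$ is what saves us: $k\le\frac d2\delta\to0$. Hence condition \eqref{cond;1.4} holds for large $N$. The upper bound on $T$ behaves like $\log R/\sqrt{1+4\alpha_0k}$ with $R\to1$, so $T\lesssim 1/|\sum_i\partial_iu_0^N(0)|\lesssim\log N/N^{1-1/r}\le1/\log N$. The data are in $H^\infty$, so by Theorem \ref{thm;1} the solution is unique and smooth up to $T$.

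\textbf{Inflation and the main obstacle.} It remains to show that $\|u(t)\|_{B^{1+\frac dp}_{p,r}}$ is unbounded on $[0,T)$. Blow-up of the $H^s$ solution in Theorem \ref{thm;blowup} comes from $\|\nabla u\|_{L^\infty}\to\infty$ along characteristics. However, the $B^{1+d/p}_{p,r}$ norm with $r>1$ does not embed into $W^{1,\infty}$, so inflation cannot be read off directly. The case $p=\infty$, $r=\infty$, where a Lipschitz bound is not controlled, must also be checked. I would argue by contradiction. Suppose $\sup_{t<T}\|u(t)\|_{B^{1+d/p}_{p,r}}<\infty$. Then use the blow-up scenario (continuation criterion) established in the paper, which I expect to say that the solution persists as long as $\int_0^T\|\nabla u\|_{L^\infty}dt<\infty$. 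Combine this with the logarithmic interpolation inequality
\begin{align*}
\|\nabla u\|_{L^\infty}\lesssim\|u\|_{B^{1+d/p}_{p,r}}\log\Big(e+\frac{\|u\|_{H^s}}{\|u\|_{B^{1+d/p}_{p,r}}}\Big)
\end{align*}
and an Osgood/Gronwall argument on the $H^s$ energy estimate. Together these would continue the solution past $T$, a contradiction. This step is the main obstacle: the logarithmic inequality with $r>1$ and a general $p$ requires careful handling, since for $r>1$ the $B^{1+d/p}_{p,r}$ bound controls $\nabla u$ only in $B^0_{\infty,r}$. If it fails, I would fall back on a more direct computation. Along the characteristic from $0$, the Riccati inequality shows that $\sum_i\partial_iu$ decreases monotonically to $-\infty$. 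Then I would lower-bound the Besov norm by low-frequency localization around a shrinking region, choosing $t_N$ as the first time the norm reaches $\log N$.
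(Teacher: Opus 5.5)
Your proposal is correct and follows essentially the paper's route. You take a dyadic sum of rescaled bumps $2^{-j}\phi(2^j\cdot)$ with weights that are $\ell^r$-summable but not $\ell^1$-summable; the paper uses $n^{-\sigma}$ with $\frac1r<\sigma<1$ over $2\le n\le N$, and your $j^{-1/r}$ over $N\le j\le 2N$ works equally well. You feed this into Theorem \ref{thm;blowup} with $k\le\frac d2\delta\to0$, and conclude by contradiction with the continuation criterion. The step you flag as the main obstacle is not one: Proposition \ref{thm;2} is already stated with $\int_0^{T}\|\nabla u\|_{B^0_{\infty,\infty}}\,dt$, and $B^{1+\frac dp}_{p,r}\hookrightarrow B^1_{\infty,\infty}$ for every $r$, so $\|\nabla u\|_{B^0_{\infty,\infty}}\le C\|u\|_{B^{1+\frac dp}_{p,r}}$ and a bounded critical norm on $[0,T)$ with $T<\infty$ immediately contradicts blow-up (your logarithmic inequality is exactly the ingredient inside that proposition, and it only needs $\nabla u\in B^0_{\infty,\infty}$).
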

The paper is organized as follows. In Section \ref{Preliminaries}, we recall some preliminary results that will be used throughout the paper. In Section \ref{Local well-posedness}, we establish local well-posedness in $B^s_{p,r}(\mathbb{R}^d)$ for $s>1+\frac{d}{p}$ with $1\leq p,r\leq\infty$, or $s=1+\frac{d}{p}$ with $r=1$ and $1\leq p\leq\infty$. In Section \ref{Nonuniform continuous dependence}, we prove the non-uniform dependence on initial data for (\ref{eq;scalar}) in $B^s_{p,r}$ with $s>1+\frac{d}{p}$, $1\leq p,r \leq \infty$ or $s = 1+ \frac{d}{p}$, $r = 1$, $1 \leq p \leq \infty$. In Section \ref{Blow-up}, we derive a blow-up criterion and obtain two sufficient conditions for finite time blow-up. In Section \ref{Ill-posedness}, we show that the Cauchy problem \eqref{eq;scalar} is ill-posed in the critical Besov spaces $B^{1+\frac{d}{p}}_{p,r}(\mathbb{R}^d)$ with $1\leq p\leq\infty$ and $1<r\leq\infty$ in the sense of norm inflation.

\section{Preliminaries}\label{Preliminaries}

\begin{proposition}\label{prop2.1}
(\cite{bahouri2011fourier}) Let $\mathcal{C}$ be an annulus and $\mathcal{B}$ a ball. A constant $C$ exists such that for any $k\in\mathbb{N}$, $1\leq p\leq q\leq\infty$, and any function $u$ of $L^p(\mathbb{R}^d)$, we have
\begin{align*}
\operatorname{Supp}\, \widehat{u}\subset\lambda\mathcal{B}
\implies
\|D^k u\|_{L^q}
= \sup_{|\alpha|=k} \|\partial^\alpha u\|_{L^q}
\leq C^{k+1}\lambda^{k+d\left(\frac{1}{p}-\frac{1}{q}\right)}\|u\|_{L^p},
\end{align*}
\begin{align*}
\operatorname{Supp}\, \widehat{u}\subset\lambda\mathcal{C}
\implies
C^{-k-1}\lambda^k\|u\|_{L^p}
\leq \|D^k u\|_{L^p}
\leq C^{k+1}\lambda^k\|u\|_{L^p}.
\end{align*}
\end{proposition}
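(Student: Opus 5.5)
Both implications are Bernstein inequalities, and I will prove them by writing $u$ as a convolution with a fixed frequency-localized kernel and then applying Young's inequality.

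\textbf{Reduction to $\lambda=1$.} Set $u_\lambda(x)=u(x/\lambda)$. Then $\widehat{u_\lambda}(\xi)=\lambda^d\widehat u(\lambda\xi)$, so $\operatorname{Supp}\widehat{u_\lambda}\subset\mathcal B$ (resp. $\mathcal C$). Moreover
\begin{align*}
\|\partial^\alpha u_\lambda\|_{L^q}=\lambda^{-|\alpha|+d/q}\|\partial^\alpha u\|_{L^q},
\qquad \|u_\lambda\|_{L^p}=\lambda^{d/p}\|u\|_{L^p}.
\end{align*}
The factors $\lambda^{k+d(1/p-1/q)}$ in the statement are then recovered exactly, so it suffices to treat $\lambda=1$.

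\textbf{First inequality.} Fix $\phi\in\mathcal D(\mathbb R^d)$ with $\phi\equiv1$ near $\mathcal B$, and let $g=\mathcal F^{-1}\phi$. Then $u=g*u$, so $\partial^\alpha u=(\partial^\alpha g)*u$. Young's inequality with $1+\frac1q=\frac1p+\frac1m$ gives
\begin{align*}
\|\partial^\alpha u\|_{L^q}\le\|\partial^\alpha g\|_{L^m}\|u\|_{L^p}.
\end{align*}
It remains to show $\|\partial^\alpha g\|_{L^m}\le C^{k+1}$ uniformly in $1\le m\le\infty$. Interpolating between $L^1$ and $L^\infty$, it is enough to bound the $L^1$ and $L^\infty$ norms. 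Write
\begin{align*}
\|\partial^\alpha g\|_{L^1}\le C\,\big\|(1+|x|^2)^d\partial^\alpha g\big\|_{L^\infty}\le C\,\big\|(\mathrm{Id}-\Delta_\xi)^d\big(\xi^\alpha\phi\big)\big\|_{L^1_\xi}.
\end{align*}
Leibniz' rule produces at most $C k^{2d}$ terms, each bounded by $R^{k}$ times constants, where $R$ is the radius of $\operatorname{Supp}\phi$. Since $k^{2d}R^k\le C^{k+1}$, the geometric constant follows; the $L^\infty$ norm is treated the same way.

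\textbf{Second inequality.} The upper bound follows from the first with $q=p$. For the lower bound, fix $\tilde\phi\in\mathcal D(\mathbb R^d\setminus\{0\})$ with $\tilde\phi\equiv1$ near $\mathcal C$. Using the identity $|\xi|^{2k}=\sum_{|\alpha|=k}\frac{k!}{\alpha!}\xi^{2\alpha}$, write
\begin{align*}
\widehat u=\sum_{|\alpha|=k}\frac{k!}{\alpha!}\,\frac{(-i\xi)^\alpha\tilde\phi(\xi)}{|\xi|^{2k}}\,\widehat{\partial^\alpha u}\cdot(\pm1).
\end{align*}
Hence $u=\sum_{\alpha}\frac{k!}{\alpha!}\,g_\alpha*\partial^\alpha u$, where $\widehat{g_\alpha}=(-i\xi)^\alpha|\xi|^{-2k}\tilde\phi$ up to sign. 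Young's inequality then gives
\begin{align*}
\|u\|_{L^p}\le\sum_{|\alpha|=k}\frac{k!}{\alpha!}\|g_\alpha\|_{L^1}\,\|D^ku\|_{L^p}.
\end{align*}
Note that $\sum_{|\alpha|=k}\frac{k!}{\alpha!}=d^k$.

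\textbf{Main obstacle.} The only nontrivial step is the uniform geometric bound $\|g_\alpha\|_{L^1}\le C^{k+1}$. I would prove it as in the first case, bounding $(1+|x|^2)^d g_\alpha$ by the $L^1$ norm of $(\mathrm{Id}-\Delta)^d$ applied to $\xi^\alpha|\xi|^{-2k}\tilde\phi$ on the compact annulus. There $|\xi|$ stays between two positive constants, so each derivative of $\xi^\alpha|\xi|^{-2k}$ costs at most a polynomial factor in $k$ times $c^k$. This gives $\|u\|_{L^p}\le C^{k+1}\|D^ku\|_{L^p}$, and the scaling step then yields the stated inequality.
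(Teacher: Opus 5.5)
Your proposal is correct and follows essentially the same route as the standard proof of this Bernstein lemma in Bahouri--Chemin--Danchin (Lemma 2.1), which the paper only cites without proof. The steps match: scale to $\lambda=1$, write $\partial^\alpha u=(\partial^\alpha g)*u$ and use Young's inequality with the weighted bound coming from $(\mathrm{Id}-\Delta_\xi)^d$, and for the lower bound use the multinomial identity $|\xi|^{2k}=\sum_{|\alpha|=k}\frac{k!}{\alpha!}\xi^{2\alpha}$ with kernels $g_\alpha$ supported on the annulus. The sign you hedge as $\pm1$ is exactly $+1$, since $(-i\xi)^\alpha(i\xi)^\alpha=\xi^{2\alpha}$.
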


\begin{proposition}
    (\cite{bahouri2011fourier}) Let $\mathcal{B} = \{\xi \in \mathbb{R}^d : |\xi| \leq \tfrac{4}{3}\}$ and $\mathcal{C} = \{\xi \in \mathbb{R}^d : \tfrac{3}{4} \leq |\xi| \leq \tfrac{8}{3}\}$. There exist two smooth, radial functions $\chi$ and $\varphi$, valued in the interval $[0,1]$, belonging respectively to $\mathcal{D}(\mathcal{B})$ and $\mathcal{D}(\mathcal{C})$, such that
\begin{align*}
\forall \xi \in \mathbb{R}^d,\ \chi(\xi) + \sum_{j\geq 0} \varphi(2^{-j}\xi) = 1,
\end{align*}
\begin{align*}
\forall \xi \in \mathbb{R}^d\setminus\{0\},\ \sum_{j\in\mathbb{Z}} \varphi(2^{-j}\xi) = 1,
\end{align*}
\begin{align*}
|j-j'| \geq 2 \implies \operatorname{Supp}\, \varphi(2^{-j}\cdot) \cap \operatorname{Supp}\, \varphi(2^{-j'}\cdot) = \emptyset,
\end{align*}
\begin{align*}
j \geq 1 \implies \operatorname{Supp}\, \chi \cap \operatorname{Supp}\, \varphi(2^{-j}\cdot) = \emptyset,
\end{align*}
the set $\widetilde{\mathcal{C}} \triangleq \mathcal{B}(0,\tfrac{2}{3}) + \mathcal{C}$ is an annulus, and we see
\begin{align*}
|j-j'| \geq 5 \implies 2^j \widetilde{\mathcal{C}} \cap 2^{j'} \mathcal{C} = \emptyset.
\end{align*}

\noindent Let $u \in \mathcal{S}'$. Defining
\begin{align*}
\Delta_j u \triangleq 0 \text{ if } j \leq -2,\ \Delta_{-1}u \triangleq \chi(D)u = \mathcal{F}^{-1}(\chi \mathcal{F}u),
\end{align*}
\begin{align*}
\Delta_j u \triangleq \varphi(2^{-j}D)u = \mathcal{F}^{-1}(\varphi(2^{-j}\cdot)\mathcal{F}u) \text{ if } j \geq 0,
\end{align*}
\begin{align*}
S_j u \triangleq \sum_{j' \leq j-1} \Delta_{j'} u,
\end{align*}
we have the following Littlewood-Paley decomposition:
\begin{align*}
u = \sum_{j\in\mathbb{Z}} \Delta_j u \quad \text{in } \mathcal{S}'.
\end{align*}
\end{proposition}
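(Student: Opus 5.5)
The plan is to follow the classical construction in \cite{bahouri2011fourier}: build $\varphi$ from $\chi$ as a difference of dilates, so that every summation identity reduces to a telescoping sum. All support statements then become elementary inequalities between radii.

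\textbf{Construction.} Fix a smooth radial function $\chi$ with values in $[0,1]$, supported in $\mathcal{B}=\{|\xi|\le \frac43\}$, equal to $1$ on $\{|\xi|\le\frac34\}$, and nonincreasing in $|\xi|$. Such a function exists by radializing a standard one-dimensional mollified cutoff. Define
\begin{align*}
\varphi(\xi):=\chi(\xi/2)-\chi(\xi).
\end{align*}
Radial monotonicity gives $\chi(\xi/2)\ge\chi(\xi)$, so $0\le\varphi\le1$. Moreover $\varphi(\xi)=0$ when $|\xi|\le\frac34$, since both terms equal $1$. Also $\varphi(\xi)=0$ when $|\xi|\ge\frac83$, since both terms vanish. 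Hence $\varphi\in\mathcal{D}(\mathcal{C})$.

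\textbf{Partitions of unity.} For $J\ge0$ the inhomogeneous sum telescopes:
\begin{align*}
\chi(\xi)+\sum_{j=0}^{J}\varphi(2^{-j}\xi)=\chi(2^{-J-1}\xi).
\end{align*}
The right-hand side equals $1$ as soon as $2^{-J-1}|\xi|\le\frac34$, which gives the first identity pointwise. Similarly, the homogeneous sum telescopes:
\begin{align*}
\sum_{|j|\le J}\varphi(2^{-j}\xi)=\chi(2^{-J-1}\xi)-\chi(2^{J}\xi).
\end{align*}
For $\xi\neq0$ this equals $1-0$ once $J$ is large, which gives the second identity.

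\textbf{Support properties.} These are checks of radii.
\begin{itemize}
\item If $j'\ge j+2$, then $2^j\cdot\frac83<2^{j+2}\cdot\frac34$, so the dilated annuli are disjoint.
\item For $j\ge1$, $\frac43<2\cdot\frac34$, so $\operatorname{Supp}\chi$ misses $\operatorname{Supp}\varphi(2^{-j}\cdot)$.
\item $\widetilde{\mathcal{C}}=\mathcal{B}(0,\frac23)+\mathcal{C}=\{\frac1{12}\le|\xi|\le\frac{10}3\}$, which is an annulus.
\item If $j'\ge j+5$, then $2^j\cdot\frac{10}3<2^{j'}\cdot\frac34$.
\item If $j\ge j'+5$, the comparison is $2^j\cdot\frac1{12}$ against $2^{j'}\cdot\frac83$, which is borderline: equality holds at $j-j'=5$.
\end{itemize}

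\textbf{Main obstacle.} The last case is the step I expect to need care, since equality occurs exactly at the endpoint $j-j'=5$. I would handle it by shrinking constants slightly: take $\chi$ supported in a ball of radius strictly less than $\frac43$, and equal to $1$ on a ball of radius slightly above $\frac34$. Then $\operatorname{Supp}\varphi$ lies in a compact subset of the interior of $\mathcal{C}$, and all the inequalities above become strict with the same constants.

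\textbf{Decomposition in $\mathcal{S}'$.} By the first telescoping identity, $S_ju=\chi(2^{-j}D)u$. For $\psi\in\mathcal{S}$ we have $\langle S_ju,\psi\rangle=\langle u,\chi(2^{-j}D)\psi\rangle$, using that $\chi$ is even. Since $\chi(2^{-j}\cdot)\widehat\psi\to\widehat\psi$ in $\mathcal{S}$, by Leibniz's rule and $\chi\equiv1$ near $0$, the Fourier transform gives $\chi(2^{-j}D)\psi\to\psi$ in $\mathcal{S}$. Hence $S_ju\to u$ in $\mathcal{S}'$, that is, $u=\sum_j\Delta_ju$.
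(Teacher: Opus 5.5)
Your construction $\varphi=\chi(\cdot/2)-\chi$, the two telescoping identities, the first two support properties, the identity $S_j=\chi(2^{-j}D)$ and the convergence $S_ju\to u$ in $\mathcal S'$ are all correct. The paper itself gives no proof and only cites \cite{bahouri2011fourier}, where $\varphi$ is built differently, by normalizing a bump $\theta$ by $\sum_{j\in\mathbb Z}\theta(2^{-j}\cdot)$. Your difference-of-dilates route is a standard and shorter alternative.

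The gap is in your treatment of the last assertion. The claim $2^j\widetilde{\mathcal C}\cap 2^{j'}\mathcal C=\emptyset$ concerns the fixed sets $\mathcal C=\{\frac34\le|\xi|\le\frac83\}$ and $\widetilde{\mathcal C}=\mathcal B(0,\frac23)+\mathcal C$. Neither set depends on $\chi$ or $\varphi$. Shrinking $\operatorname{Supp}\chi$ and $\operatorname{Supp}\varphi$ therefore leaves the borderline comparison of $2^j\cdot\frac1{12}$ with $2^{j'}\cdot\frac83$ unchanged, so it does not become strict. In fact, under your closed-ball reading the assertion is false at $j=j'+5$ for every choice of $\chi,\varphi$. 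The point $\xi_0=2^{j'}\frac83e_1$ lies in $2^{j'}\mathcal C$. It also equals $2^{j}\frac1{12}e_1=2^j\bigl(\frac34e_1-\frac23e_1\bigr)$, which lies in $2^j\widetilde{\mathcal C}$.

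What closes this step is to read $\mathcal B(0,\frac23)$ as the \emph{open} ball; this is the reading under which the assertion is true. Then $\widetilde{\mathcal C}=\{\frac1{12}<|\xi|<\frac{10}3\}$, which is still an annulus. For $j\ge j'+5$, every $\xi\in 2^j\widetilde{\mathcal C}$ satisfies $|\xi|>2^j/12\ge 2^{j'}\cdot\frac83$, so $\xi\notin 2^{j'}\mathcal C$. The case $j'\ge j+5$ is as you wrote.

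Your idea of keeping $\operatorname{Supp}\chi$ strictly inside the ball of radius $\frac43$ is the right tool for a related fact, the one actually used in paraproduct estimates. That fact is that the spectrum of $S_{j-1}u\,\Delta_jv$, which is contained in $2^j\bigl(\frac12\operatorname{Supp}\chi+\operatorname{Supp}\varphi\bigr)$, misses $\operatorname{Supp}\varphi(2^{-j'}\cdot)$ when $|j-j'|\ge5$. It does not, however, prove the statement about $\widetilde{\mathcal C}$ and $\mathcal C$ as formulated in the proposition.
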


\begin{definition}

     Let $s \in \mathbb{R}$ and $(p,r) \in [1,\infty]^2$. The nonhomogeneous Besov space $B^s_{p,r}$ consists of all $u \in \mathcal{S}'$ such that
\begin{align*}
\|u\|_{B^s_{p,r}} \triangleq \left\| \left(2^{js} \|\Delta_j u\|_{L^p}\right)_{j\in\mathbb{Z}} \right\|_{\ell^r(\mathbb{Z})} < \infty.
\end{align*}
\end{definition}

\begin{definition}
    Considering $u, v \in \mathcal{S}'$, we have the following Bony decomposition:
\begin{align*}
uv = T_u v + T_v u + R(u,v),
\end{align*}
where
\begin{align*}
T_u v = \sum_j S_{j-1}u \Delta_j v,\quad R(u,v) = \sum_{|k-j|\leq 1} \Delta_k u \Delta_j v.
\end{align*}
\end{definition}

\begin{proposition}\label{prop;3}
(\cite{bahouri2011fourier}) Let $s, s' \in \mathbb{R}$, $1 \leq p,p_1,p_2,r,r_1,r_2 \leq \infty$.
\begin{enumerate}
    \item $B^s_{p,r}$ is a Banach space, and is continuously embedded in $\mathcal{S}'$.
    \item If $r<\infty$, then $\underset{j\to\infty}{\lim}
\|S_j u - u\|_{B^s_{p,r}} = 0$. If $p,r<\infty$, then $\mathcal{C}_0^\infty$ is dense in $B^s_{p,r}$.
    \item If $p_1 \leq p_2$ and $r_1 \leq r_2$, then $B^s_{p_1,r_1} \hookrightarrow B^{s-d\left(\frac{1}{p_1}-\frac{1}{p_2}\right)}_{p_2,r_2}$. If $s<s'$, then the embedding $B^{s'}_{p,r_2} \hookrightarrow B^s_{p,r_1}$ is locally compact.
    \item $B^s_{p,r} \hookrightarrow L^\infty \iff s>\frac{d}{p}$ or $s=\frac{d}{p},\ r=1$.
    \item Fatou property: if $(u_n)_{n\in\mathbb{N}}$ is a bounded sequence in $B^s_{p,r}$, then an element $u\in B^s_{p,r}$ and a subsequence $(u_{n_k})_{k\in\mathbb{N}}$ exist such that
    \begin{align*}
    \underset{k\to\infty}{\lim} u_{n_k} = u \text{ in } \mathcal{S}' \quad \text{and} \quad \|u\|_{B^s_{p,r}} \leq C\lim_{k\to\infty}\inf\|u_{n_k}\|_{B^s_{p,r}}.
    \end{align*}
    \item Let $m\in\mathbb{R}$ and $f$ be a $S^m$-multiplier, i.e. $f$ is a smooth function and satisfies that $\forall \alpha\in\mathbb{N}^d$, $\exists C=C(\alpha)$ such that $|\partial^\alpha f(\xi)| \leq C(1+|\xi|)^{m-|\alpha|},\ \forall \xi\in\mathbb{R}^d$. Then the operator $f(D)=\mathcal{F}^{-1}(f\mathcal{F}\cdot)$ is continuous from $B^s_{p,r}$ to $B^{s-m}_{p,r}$.
    \item If $s<s'$, then for all $\phi\in\mathcal{S}(\mathbb{R}^d)$, multiplication by $\phi$ is a compact operator from $B^{s'}_{p,\infty}$ to $B^s_{p,1}$.
\end{enumerate}
\end{proposition}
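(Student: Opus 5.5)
Every item in Proposition \ref{prop;3} is a standard fact about nonhomogeneous Besov spaces. The plan is to prove each one directly from the dyadic decomposition and the Bernstein inequalities of Proposition \ref{prop2.1}, following \cite{bahouri2011fourier}.

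\textbf{Items (1), (2) and (5).} For the embedding $B^s_{p,r}\hookrightarrow\mathcal{S}'$ in (1), I pair $u=\sum_j\Delta_j u$ with $\phi\in\mathcal{S}$ and write $\langle \Delta_j u,\phi\rangle=\langle \Delta_j u,\widetilde\Delta_j\phi\rangle$, where $\widetilde\Delta_j$ is a slightly enlarged dyadic block. Since $\|\widetilde\Delta_j\phi\|_{L^{p'}}\lesssim 2^{-jN}$ for every $N$, the series converges and is controlled by $\|u\|_{B^s_{p,r}}$ times a Schwartz seminorm of $\phi$.

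For completeness, I take a Cauchy sequence $(u_n)$. By the embedding it converges in $\mathcal{S}'$ to some $u$. Each block $\Delta_j u_n$ converges in $L^p$, and applying Fatou's lemma in $\ell^r$ shows $u\in B^s_{p,r}$ with $u_n\to u$ in $B^s_{p,r}$.

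The Fatou property (5) uses the same tools. Each block $\Delta_j u_n$ is bounded in $L^p$ with frequencies in a fixed annulus. A diagonal extraction gives convergence in $\mathcal{S}'$ (weak-$*$ sense on each block), and lower semicontinuity of the norms yields the stated inequality.

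For (2), $\|S_ju-u\|_{B^s_{p,r}}$ is bounded by the $\ell^r$ tail $\big(\sum_{j'\ge j-1}2^{j'sr}\|\Delta_{j'}u\|_{L^p}^r\big)^{1/r}$, which tends to $0$ when $r<\infty$. Density of $\mathcal{C}_0^\infty$ when $p<\infty$ then comes from multiplying $S_ju$ by a spatial cutoff $\theta(\cdot/R)$ and letting $R\to\infty$. Here $p<\infty$ is used so that the dominated convergence theorem applies in $L^p$.

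\textbf{Items (3) and (4).} The first half of (3) is Bernstein: $\|\Delta_j u\|_{L^{p_2}}\lesssim 2^{jd(1/p_1-1/p_2)}\|\Delta_j u\|_{L^{p_1}}$, combined with $\ell^{r_1}\subset\ell^{r_2}$.

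For the sufficiency in (4), I sum $\|\Delta_j u\|_{L^\infty}\lesssim 2^{jd/p}\|\Delta_j u\|_{L^p}$ over $j$. When $s>d/p$ Hölder's inequality in $j$ controls the sum; when $s=d/p$ it is exactly the $\ell^1$ norm. For necessity, I construct a lacunary counterexample $u=\sum_j a_j 2^{-jd/p}\,\check\psi(2^j\cdot)$, with $\check\psi\ge0$ at the origin. With $a_j\in\ell^r\setminus\ell^1$ this gives $u(0)=\infty$; for $s<d/p$ I take $a_j=1$.

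\textbf{Items (6) and (7).} For (6), I write $\Delta_j f(D)u=\big(f\,\widetilde\varphi(2^{-j}\cdot)\big)(D)\Delta_j u$. The symbol estimates, after rescaling and integrating by parts, show that the kernel of $\big(f\,\widetilde\varphi(2^{-j}\cdot)\big)(D)$ has $L^1$ norm $\lesssim 2^{jm}$; Young's inequality then gives the result.

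For the local compactness in (3) and for (7), I split $\phi u=\phi S_Nu+\phi(u-S_Nu)$. The high part has small norm in $B^{s}_{p,1}$, uniformly on bounded sets of $B^{s'}_{p,\infty}$, thanks to the gap $s'-s>0$ and the product estimate with the Schwartz function $\phi$. The low part is smooth with bounded derivatives and is localized by $\phi$, so Arzelà–Ascoli (or Kolmogorov–Riesz for $p<\infty$) gives precompactness.

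I expect the main obstacle to be (7). Controlling $\phi\,(u-S_Nu)$ in $B^s_{p,1}$ requires a paraproduct decomposition of the product, and for $p=\infty$ the spatial tails must come from the decay of $\phi$ rather than from integrability of $u$.
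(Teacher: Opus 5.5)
Your outline follows the standard textbook route of \cite{bahouri2011fourier}, which is all the paper relies on; it gives no proof beyond the citation. Items (1)--(3), (6) and (7) are sound as sketches.

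The step that fails as written is the necessity counterexample in (4). You already have $\|\check\psi(2^j\cdot)\|_{L^p}=2^{-jd/p}\|\check\psi\|_{L^p}$, so your extra factor $2^{-jd/p}$ gives $2^{jd/p}\|\Delta_ju\|_{L^p}\approx a_j2^{-jd/p}$ instead of $a_j$. Worse, take $p<\infty$ and any bounded $(a_j)$, which covers both $a_j\in\ell^r$ and $a_j=1$. Then your series converges absolutely in $L^\infty$, since the $j$-th term has sup norm $a_j2^{-jd/p}\|\check\psi\|_{L^\infty}$. So your $u$ is bounded, and $u(0)=\check\psi(0)\sum_ja_j2^{-jd/p}<\infty$. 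The formula is correct only for $p=\infty$.

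The fix is to drop the factor. Take $\psi\ge0$, $\psi\not\equiv0$, supported in $\{\frac43<|\xi|<\frac32\}$. On this annulus $\chi$ and all $\varphi(2^{-j'}\cdot)$ with $j'\ge1$ vanish, hence $\varphi\equiv1$ there. Then $u=\sum_{j\ge0}a_j\check\psi(2^j\cdot)$ has $\Delta_ju=a_j\check\psi(2^j\cdot)$ exactly, so $2^{js}\|\Delta_ju\|_{L^p}=a_j2^{j(s-d/p)}\|\check\psi\|_{L^p}$. Choose $a_j=(j+1)^{-1}$ for $s=\frac dp$, $r>1$, and $a_j=1$ for $s<\frac dp$.

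Also, ``$u(0)=\infty$'' says nothing about an element of $L^\infty$, which is defined only up to null sets. Conclude with $S_k=\chi(2^{-k}D)$ instead. If $u\in L^\infty$, then $\|S_ku\|_{L^\infty}\le\|\mathcal{F}^{-1}\chi\|_{L^1}\|u\|_{L^\infty}$ for all $k$. But $S_ku$ is smooth with $S_ku(0)=\check\psi(0)\sum_{j=0}^{k-1}a_j\to\infty$, a contradiction.

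There is a smaller omission in (5). Convergence of each block separately does not yet produce $u$ with $u_{n_k}\to u$ in $\mathcal{S}'$. You need the uniform tail bound $\sum_{j>J}|\langle\Delta_ju_{n_k},\phi\rangle|\le\sup_n\|u_n\|_{B^s_{p,r}}\sum_{j>J}2^{-js}\|\widetilde\Delta_j\phi\|_{L^{p'}}$. More simply, your estimate in (1) makes $(u_n)$ an equicontinuous subset of $\mathcal{S}'$. Since $\mathcal{S}$ is separable, it therefore has a weak-$*$ convergent subsequence. Lower semicontinuity of $\|\Delta_j\cdot\|_{L^p}$ and Fatou's lemma in $\ell^r$ then give the stated inequality.
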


\begin{lemma}\label{lem;1}
(\cite{bahouri2011fourier})The following estimates hold:
\begin{enumerate}
    \item For any $s>0$ and any $(p,r)\in[1,\infty]^2$, the space $L^\infty\cap B^s_{p,r}$ is an algebra, and a constant $C=C(s,d)$ exists such that
    \begin{align*}
    \|uv\|_{B^s_{p,r}} \leq C\bigl(\|u\|_{L^\infty}\|v\|_{B^s_{p,r}} + \|u\|_{B^s_{p,r}}\|v\|_{L^\infty}\bigr).
    \end{align*}
    \item If $1\leq p,r\leq\infty$, $s_1\leq s_2$, $s_2>\frac{d}{p}$ ($s_2\geq\frac{d}{p}$ if $r=1$) and $s_1+s_2>\max\bigl\{0,\frac{2d}{p}-d\bigr\}$, there exists $C=C(s_1,s_2,p,r,d)$ such that
    \begin{align*}
    \|uv\|_{B^{s_1}_{p,r}} \leq C\|u\|_{B^{s_1}_{p,r}}\|v\|_{B^{s_2}_{p,r}}.
    \end{align*}
\end{enumerate}
\end{lemma}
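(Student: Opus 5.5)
The statement is the classical product estimate, so I plan to reprove both parts with the Bony decomposition $uv=T_uv+T_vu+R(u,v)$. Each of the three pieces gets a block-by-block estimate, using Proposition \ref{prop2.1} (Bernstein) and the support properties of the dyadic blocks.

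\textbf{Part (1).} For the paraproduct $T_uv=\sum_j S_{j-1}u\,\Delta_jv$, each term has Fourier support in an annulus $2^j\widetilde{\mathcal C}$. Hence $\Delta_{j'}T_uv$ only involves $|j-j'|\le 4$. Since $\|S_{j-1}u\|_{L^\infty}\le C\|u\|_{L^\infty}$, this gives
$$2^{j's}\|\Delta_{j'}T_uv\|_{L^p}\le C\|u\|_{L^\infty}\sum_{|j-j'|\le4}2^{js}\|\Delta_jv\|_{L^p},$$
so $\|T_uv\|_{B^s_{p,r}}\le C\|u\|_{L^\infty}\|v\|_{B^s_{p,r}}$ for every real $s$. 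The same bound holds for $T_vu$ with the roles of $u$ and $v$ exchanged. For the remainder, $\Delta_ku\,\widetilde\Delta_kv$ (where $\widetilde\Delta_k=\Delta_{k-1}+\Delta_k+\Delta_{k+1}$) has spectrum only in a ball $2^k\mathcal B$. Therefore $\Delta_jR(u,v)=\sum_{k\ge j-N}\Delta_j(\Delta_ku\,\widetilde\Delta_kv)$ for a fixed $N$, and
$$2^{js}\|\Delta_jR\|_{L^p}\le C\|v\|_{L^\infty}\sum_{k\ge j-N}2^{(j-k)s}\,2^{ks}\|\Delta_ku\|_{L^p}.$$
Because $s>0$, the sequence $2^{-ms}\mathbf 1_{m\ge -N}$ lies in $\ell^1$. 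Young's inequality for series then gives $\|R(u,v)\|_{B^s_{p,r}}\le C\|u\|_{B^s_{p,r}}\|v\|_{L^\infty}$, and summing the three pieces proves (1).

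\textbf{Part (2).} The hypothesis $s_2>\frac dp$ (or $s_2=\frac dp$ with $r=1$) gives $B^{s_2}_{p,r}\hookrightarrow L^\infty$ by Proposition \ref{prop;3}(4). The estimate for $T_vu$ above then yields $\|T_vu\|_{B^{s_1}_{p,r}}\le C\|v\|_{L^\infty}\|u\|_{B^{s_1}_{p,r}}$.

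For $T_uv$, Bernstein gives $\|\Delta_ku\|_{L^\infty}\le C2^{kd/p}\|\Delta_ku\|_{L^p}$, and hence
$$\|S_{j-1}u\|_{L^\infty}\le C\sum_{k\le j-2}2^{k(\frac dp-s_1)}\,c_k\,\|u\|_{B^{s_1}_{p,r}},\qquad (c_k)\in\ell^r.$$
I split into two cases.
\begin{itemize}
\item If $s_1<\frac dp$, the sum is $\lesssim 2^{j(\frac dp-s_1)}\|u\|_{B^{s_1}_{p,r}}$. Then $2^{js_1}\|S_{j-1}u\,\Delta_jv\|_{L^p}\lesssim 2^{j(\frac dp-s_2)}\,2^{js_2}\|\Delta_jv\|_{L^p}\,\|u\|_{B^{s_1}_{p,r}}$, which is summable in $\ell^r$ because $s_2\ge\frac dp$. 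In the endpoint $r=1$ I will track the $\ell^1$ sums directly rather than taking suprema.
\item If $s_1\ge\frac dp$, then $u\in L^\infty$, and since $s_1\le s_2$ I use part (1) type bounds with $B^{s_2}\hookrightarrow B^{s_1}$. The borderline $s_1=\frac dp$ with $r>1$ must be handled carefully; there I would instead bound the geometric sum by $j\,2^{0}$ and absorb the factor $j$ using the strict inequality $s_2>\frac dp$.
\end{itemize}

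\textbf{Remainder in part (2), the main obstacle.} Here the low regularity $s_1$ may be negative, and the condition $s_1+s_2>\max\{0,\frac{2d}{p}-d\}$ is what is needed.
\begin{itemize}
\item For $p\ge2$, I estimate $\Delta_ku\,\widetilde\Delta_kv$ in $L^{p/2}$ by Hölder. Summing over $k\ge j-N$ as in part (1) is allowed since $s_1+s_2>0$, and gives $R\in B^{s_1+s_2}_{p/2,r}$. Proposition \ref{prop;3}(3) then embeds this into $B^{s_1+s_2-\frac dp}_{p,r}\hookrightarrow B^{s_1}_{p,r}$, using $s_2\ge \frac dp$.
\item For $p<2$, I put $\Delta_ku$ in $L^p$ and $\widetilde\Delta_kv$ in $L^{p'}$, where $2^{k(\frac dp-\frac d{p'})}$ is the Bernstein loss. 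This places the product in $L^1$, and the dyadic sum converges exactly when $s_1+s_2-\frac dp+\frac d{p'}>0$, i.e.\ $s_1+s_2>\frac{2d}p-d$. Finally I embed $B^{\sigma}_{1,r}\hookrightarrow B^{\sigma-d+\frac dp}_{p,r}$ and check that the resulting index is at least $s_1$.
\end{itemize}
I expect this remainder bookkeeping to be the delicate step, in particular the $p<2$ case and the equality cases with $r=1$, where Young's inequality must be applied with $\ell^1$ kernels and no room is available to lose.
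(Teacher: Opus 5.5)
Your argument is correct and is the standard Bony-decomposition proof from the cited Bahouri--Chemin--Danchin reference; the paper gives no proof of its own and only quotes that source. The ingredients match: the paraproducts are controlled through the $L^\infty$ norm or a Bernstein geometric sum on the low-frequency factor, and the remainder is placed in $B^{s_1+s_2}_{p/2,r}$ when $p\ge2$, or in $B^{s_1+s_2-\frac{2d}{p}+d}_{1,r}$ when $p<2$, before embedding back into $B^{s_1}_{p,r}$ using $s_2\ge\frac dp$. One phrase is inaccurate: ``if $s_1\ge\frac dp$ then $u\in L^\infty$'' fails at $s_1=\frac dp$ with $r>1$, but you already treat that borderline separately via the strict gap $s_2>s_1$, so nothing is missing.
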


\begin{lemma}\label{lem;2}
(\cite{bahouri2011fourier})
Consider the following class of transport equations:
\begin{align}\label{eq;trans}
\begin{cases} 
\partial_t f + v \cdot \nabla f + A  f = g \\ 
f|_{t=0} = f_0, 
\end{cases}
\end{align}
Let $1 \leq p \leq p_1 \leq \infty$, $1 \leq r \leq \infty$. Assume that
\begin{align}\label{cond;3.14}
\sigma \geq -d\min\left(\frac 1{p_1},\frac 1{p'}\right)
\quad\text{or}\quad
\sigma \geq -1 -d\min\left(\frac 1{p_1},\frac 1{p'}\right)
\quad\text{if } \operatorname{div}v=0  
\end{align}
with strict inequality if $r<\infty$.

There exists a constant $C$, depending only on $d,p,p_1,r$, and $\sigma$, such that
for all solutions $f\in L^\infty\big([0,T];B_{p,r}^\sigma\big)$ of (\ref{eq;trans}) with $A\equiv0$, initial data $f_0$ in $B_{p,r}^\sigma$,
and $g$ in $L^1\big([0,T];B_{p,r}^\sigma\big)$, we have, for a.e. $t\in[0,T]$,
\begin{align}\label{syfs}
\|f\|_{\widetilde{L}_t^\infty(B_{p,r}^\sigma)}
&\leq \bigg(\|f_0\|_{B_{p,r}^\sigma}
+\int_0^t \exp\big(-C V_{p_1}(t')\big)\|g(t')\|_{B_{p,r}^\sigma}dt'\bigg)\exp\big(C V_{p_1}(t)\big)  
\end{align}
with, if the inequality is strict in (\ref{cond;3.14}),
\begin{align}\label{3.14;est}
V_{p_1}'(t) :=
\begin{cases}
\|\nabla v(t)\|_{B_{p_1,\infty}^{\frac{d}{p_1}}\cap L^\infty},
& \text{if } \sigma < 1+\dfrac{d}{p_1},\\[4pt]
\|\nabla v(t)\|_{B_{p_1,1}^{\sigma-1}},
& \text{if } \sigma >1+\dfrac{d}{p_1} \;\text{or}\;
\big\{\sigma=1+\dfrac{d}{p_1} \;\text{and}\; r=1\big\},
\end{cases}
\end{align}
and, if equality holds in (\ref{cond;3.14}) and $r=\infty$,
\begin{align*}
V_{p_1}'(t) \overset{\mathrm{def}}{=} \|\nabla v(t)\|_{B_{p_1,1}^{\frac{d}{p_1}}}.
\end{align*}

If $f=v$, then for all $\sigma>0$ ($\sigma>-1$, if $\operatorname{div}v=0$), the estimate (\ref{syfs}) holds with
\begin{align*}
V_{p_1}'(t)=\|\nabla v(t)\|_{L^\infty}.
\end{align*}
\end{lemma}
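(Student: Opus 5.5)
We follow the classical route of Bahouri--Chemin--Danchin: localize the transport equation in frequency, estimate each dyadic block by an $L^p$ energy argument along the flow, and control the commutator between $\Delta_j$ and the transport operator. Throughout we take $A\equiv 0$, as in the statement.

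\textbf{Step 1: localized equations.} Applying $\Delta_j$ to (\ref{eq;trans}) gives
\begin{align*}
\partial_t \Delta_j f + v\cdot\nabla \Delta_j f = \Delta_j g + R_j,\qquad R_j:=[v\cdot\nabla,\Delta_j]f .
\end{align*}
For $p<\infty$ we multiply by $|\Delta_j f|^{p-2}\Delta_j f$ and integrate by parts, which yields
\begin{align*}
\frac{d}{dt}\|\Delta_j f\|_{L^p}\le \frac1p\|\operatorname{div} v\|_{L^\infty}\|\Delta_j f\|_{L^p}+\|\Delta_j g\|_{L^p}+\|R_j\|_{L^p}.
\end{align*}
For $p=\infty$ we argue instead along the characteristics of $v$. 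Since the flow is measure preserving up to $\exp(\int\|\operatorname{div}v\|_{L^\infty})$, the same bound holds. Integrating in time, we get
\begin{align*}
\|\Delta_j f(t)\|_{L^p}\le \|\Delta_j f_0\|_{L^p}+\int_0^t\Big(\|\Delta_j g\|_{L^p}+\|R_j\|_{L^p}+C\|\nabla v\|_{L^\infty}\|\Delta_j f\|_{L^p}\Big)\,dt'.
\end{align*}

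\textbf{Step 2: commutator estimate (the main obstacle).} The core of the proof is the bound
\begin{align}\label{comm;plan}
\Big\|\big(2^{j\sigma}\|R_j\|_{L^p}\big)_j\Big\|_{\ell^r}\le C\,V_{p_1}'(t)\,\|f\|_{B^\sigma_{p,r}},
\end{align}
with $V'_{p_1}$ as in (\ref{3.14;est}). To prove it we write $v\cdot\nabla f=\sum_k v^k\partial_k f$ and apply the Bony decomposition to $v^k\partial_k f$. This splits $R_j$ into five pieces:
\begin{itemize}
\item $[T_{v^k},\Delta_j]\partial_k f$;
\item $T_{\partial_k\Delta_j f}v^k$ and $\Delta_j T_{\partial_k f}v^k$;
\item $\Delta_j R(v^k,\partial_k f)$ and $R(v^k,\partial_k\Delta_j f)$.
\end{itemize}

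The paraproduct commutator $[T_{v^k},\Delta_j]\partial_k f$ is handled by the first-order Taylor formula for the kernel of $\Delta_j$. Only $|j-j'|\le4$ contribute, and this piece is bounded by $\|\nabla v\|_{L^\infty}\,2^{-j\sigma}c_j\|f\|_{B^\sigma_{p,r}}$ with $(c_j)_j\in\ell^r$ of norm one.

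The terms where $v$ carries the high frequency are where the threshold $\sigma$ versus $1+\frac d{p_1}$ enters. We estimate $\Delta_{j'}v$ in $L^{p_1}$ and $S_{j'-1}\nabla f$ in $L^{p_1^*}$ via Bernstein (Proposition \ref{prop2.1}), with $\frac1p=\frac1{p_1}+\frac1{p_1^*}$ when $p\le p_1$. If $\sigma<1+\frac d{p_1}$, the low-frequency sum over $f$ converges, and we obtain the norm $\|\nabla v\|_{B^{d/p_1}_{p_1,\infty}\cap L^\infty}$. If $\sigma>1+\frac d{p_1}$, or $\sigma=1+\frac d{p_1}$ with $r=1$, we instead put $\nabla v$ in $B^{\sigma-1}_{p_1,1}$ and $f$ in $L^\infty$-type norms.

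The remainder terms require $\sigma+\frac{d}{p_1}$-type positivity to sum the high frequencies $k\ge j-3$. This is exactly where condition (\ref{cond;3.14}) is used, with $\min(\frac1{p_1},\frac1{p'})$ arising from the duality choice when $p_1>p'$. When $\operatorname{div}v=0$, we write $v^k\partial_k f=\partial_k(v^kf)$ in the remainder, which gains one derivative and lowers the threshold by $1$. In the borderline case of equality with $r=\infty$, the remainder sum is only log-divergent, and we close it by using the stronger norm $\|\nabla v\|_{B^{d/p_1}_{p_1,1}}$. Checking all these index cases is the tedious part; I would first prove the generic case $\sigma\in(-d\min(\cdot),1+\frac d{p_1})$ and then adapt.

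\textbf{Step 3: summation and Gronwall.} We multiply the inequality of Step 1 by $2^{j\sigma}$ and take the $\ell^r$ norm after the $\sup_t$. This makes the $\widetilde L^\infty_t$ norm appear, since the sup is taken blockwise. Using (\ref{comm;plan}) and $\|\nabla v\|_{L^\infty}\lesssim V'_{p_1}$, we obtain
\begin{align*}
\|f\|_{\widetilde L^\infty_t(B^\sigma_{p,r})}\le \|f_0\|_{B^\sigma_{p,r}}+\int_0^t\|g\|_{B^\sigma_{p,r}}\,dt'+C\int_0^t V'_{p_1}\|f\|_{B^\sigma_{p,r}}\,dt'.
\end{align*}
Gronwall's lemma in its integrating-factor form then gives (\ref{syfs}).

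\textbf{The case $f=v$.} Here the terms where $v$ carries the high frequency are themselves part of $f$. We can therefore bound them by $\|\nabla v\|_{L^\infty}\|v\|_{B^\sigma_{p,r}}$ for all $\sigma>0$, or $\sigma>-1$ if $\operatorname{div}v=0$, using the symmetric structure. This yields $V'_{p_1}=\|\nabla v\|_{L^\infty}$.
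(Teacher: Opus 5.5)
Your proposal is correct and follows the standard Bahouri--Chemin--Danchin proof (their a priori transport estimate, built on their commutator lemma), which the paper cites rather than proves: localize with $\Delta_j$, bound $[v\cdot\nabla,\Delta_j]f$ through the five-piece Bony splitting, take the time supremum blockwise to get the $\widetilde L^\infty_t$ norm, and close with the integrating-factor form of Gronwall. Two small slips need fixing: the $L^1$/duality route giving $\min(\frac1{p_1},\frac1{p'})=\frac1{p'}$ is needed when $p_1<p'$ (i.e.\ $\frac1p+\frac1{p_1}>1$), not when $p_1>p'$; and the block $\Delta_{-1}v$ entering the two remainder pieces is not controlled by $\nabla v$ (for $v\equiv c$ each piece is proportional to $c$, although the two cancel), so you should split off $\Delta_{-1}v$, run the Bony splitting only on $v-\Delta_{-1}v$, whose spectrum avoids a neighbourhood of the origin, and bound $[\Delta_{-1}v\cdot\nabla,\Delta_j]f$ directly by the Taylor-formula commutator estimate.
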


\begin{lemma}\label{lem;3}
(\cite{MR3609245})   Let \( F : \mathbb{R} \to \mathbb{R} \) be smooth with \( F(0) = 0 \). For all \( 1 \leq p, r \leq \infty \) and \( s > 0 \), it holds that \( F(u) \in \dot{B}_{p,r}^s \cap L^\infty \) for \( u \in \dot{B}_{p,r}^s \cap L^\infty \), and
\begin{align*}
\|F(u)\|_{\dot{B}_{p,r}^s} \leq C \|u\|_{\dot{B}_{p,r}^s}
\end{align*}
with \( C \) depending only on \( \|u\|_{L^\infty} \), \( F' \) (and higher derivatives), \( s \), \( p \) and \( d \).

In the case \( s > -\min\left(\frac{d}{p}, \frac{d}{p'}\right) \) then \( u \in \dot{B}_{p,r}^s \cap \dot{B}_{p,1}^{\frac{d}{p}} \) implies that \( F(u) \in \dot{B}_{p,r}^s \cap \dot{B}_{p,1}^{\frac{d}{p}} \), and
\begin{align*}
\|F(u)\|_{\dot{B}_{p,r}^s} \leq C(1 + \|u\|_{\dot{B}_{p,1}^{\frac{d}{p}}}) \|u\|_{\dot{B}_{p,r}^s}.
\end{align*}
\end{lemma}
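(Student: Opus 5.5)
Lemma~\ref{lem;3} is a composition estimate in homogeneous Besov spaces, and it is cited from \cite{MR3609245}. I would prove it with the standard paralinearization (Meyer's telescoping) argument in the Littlewood--Paley framework.

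\textbf{First part ($s>0$, $u\in \dot B^s_{p,r}\cap L^\infty$).} Write
\begin{align*}
F(u)=\sum_{j\in\mathbb{Z}}\bigl(F(\dot S_{j+1}u)-F(\dot S_j u)\bigr)=\sum_j m_j\,\dot\Delta_j u,\qquad m_j:=\int_0^1F'(\dot S_ju+\tau\dot\Delta_ju)\,d\tau .
\end{align*}
The sum starts from $F(\dot S_j u)\to F(0)=0$ as $j\to-\infty$, which is where $F(0)=0$ is used; to control the low frequencies one uses $u\in L^\infty$ together with the realization convention for $\dot B^s_{p,r}$. Since $\|\dot S_ju\|_{L^\infty}\lesssim\|u\|_{L^\infty}$, the chain rule and Bernstein's inequality (Proposition~\ref{prop2.1}) give
\begin{align*}
\|\partial^\beta m_j\|_{L^\infty}\le C_\beta 2^{j|\beta|},
\end{align*}
with $C_\beta$ depending only on $\|u\|_{L^\infty}$ and on $F'$ and its derivatives. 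Then I would estimate $\dot\Delta_k(m_j\dot\Delta_ju)$ in two regimes.
\begin{itemize}
\item For $j\ge k$, use the crude bound $\|\dot\Delta_k(m_j\dot\Delta_ju)\|_{L^p}\lesssim\|\dot\Delta_ju\|_{L^p}$.
\item For $j<k$, apply $N>s$ derivatives: $\|\dot\Delta_k(m_j\dot\Delta_ju)\|_{L^p}\lesssim 2^{-kN}\|\partial^N(m_j\dot\Delta_ju)\|_{L^p}\lesssim2^{(j-k)N}\|\dot\Delta_ju\|_{L^p}$.
\end{itemize}
Multiplying by $2^{ks}$ gives the kernel $2^{(k-j)s}\mathbf 1_{j\ge k}+2^{(k-j)(s-N)}\mathbf 1_{j<k}$. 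This kernel is summable exactly because $s>0$ and $N>s$. Young's inequality for series then yields $\|F(u)\|_{\dot B^s_{p,r}}\le C\|u\|_{\dot B^s_{p,r}}$.

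\textbf{Second part ($s>-\min(d/p,d/p')$, $u\in\dot B^s_{p,r}\cap\dot B^{d/p}_{p,1}$).} For $s\le0$ the argument above fails, because the $j\ge k$ sum diverges. Here I would instead write $F(u)=F'(0)u+G(u)u$ with $G$ smooth and $G(0)=0$. Then:
\begin{itemize}
\item $\dot B^{d/p}_{p,1}\hookrightarrow L^\infty$, so the first part applied at $s=d/p>0$ gives $\|G(u)\|_{\dot B^{d/p}_{p,1}}\le C\|u\|_{\dot B^{d/p}_{p,1}}$.
\item The homogeneous analogue of Lemma~\ref{lem;1}(2), with $s_1=s$, $s_2=d/p$ and $r=1$, requires $s+d/p>\max(0,2d/p-d)$, i.e.\ $s>-\min(d/p,d/p')$. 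It gives $\|G(u)u\|_{\dot B^s_{p,r}}\lesssim\|G(u)\|_{\dot B^{d/p}_{p,1}}\|u\|_{\dot B^s_{p,r}}$.
\end{itemize}
Combining the two bounds produces the factor $(1+\|u\|_{\dot B^{d/p}_{p,1}})$. Membership $F(u)\in\dot B^{d/p}_{p,1}$ follows from the first part.

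\textbf{Main obstacle.} The delicate point is the low-frequency/realization issue in homogeneous spaces: making sure $\sum_j$ converges in $\mathcal S'_h$. I would handle it by working with the modulo-polynomials convention, and by using $u\in L^\infty$ (respectively $\dot B^{d/p}_{p,1}$) to guarantee $\dot S_ju\to0$ uniformly as $j\to-\infty$.
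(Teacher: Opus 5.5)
The paper gives no proof of this lemma; it is quoted from \cite{MR3609245}, which rests on the Bahouri--Chemin--Danchin composition theory. Your route is that standard argument: Meyer's first linearization $F(u)=\sum_j m_j\dot\Delta_ju$ with the two-regime bound for $s>0$, then $F(u)=F'(0)u+uG(u)$ and a product law for the remaining range. The estimates in your first part are correct. Two steps need repair.

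\textbf{The product law.} The law you invoke does not give what you need. Lemma~\ref{lem;1}(2) uses the same third index $r$ on both factors and allows $s_2=d/p$ only when $r=1$. Its homogeneous analogue therefore yields only $\|G(u)u\|_{\dot B^s_{p,1}}\lesssim\|G(u)\|_{\dot B^{d/p}_{p,1}}\|u\|_{\dot B^s_{p,1}}$, i.e.\ the case $r=1$. You need the mixed estimate
\begin{align*}
\|ab\|_{\dot B^{s}_{p,r}}\lesssim\|a\|_{\dot B^{d/p}_{p,1}}\|b\|_{\dot B^{s}_{p,r}},\qquad -\min\Big(\frac dp,\frac d{p'}\Big)<s<\frac dp .
\end{align*}
It follows from Bony's decomposition:
\begin{itemize}
\item $T_ab$ is bounded by $\|a\|_{L^\infty}\|b\|_{\dot B^s_{p,r}}$;
\item $T_ba$ uses $b\in\dot B^{s-d/p}_{\infty,r}$ with $s-d/p<0$;
\item $R(a,b)$ needs $s+\frac dp>0$ (estimate in $L^{p/2}$, $p\ge2$) or $s+\frac d{p'}>0$ (estimate in $L^1$, $p<2$).
\end{itemize}
This estimate fails for $s>d/p$: take $b\in\mathcal S$ equal to $1$ on the support of a rough compactly supported $a$, so that $ab=a$. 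So say explicitly that for $s>0$ the second assertion already follows from your first part, because $\|u\|_{L^\infty}\lesssim\|u\|_{\dot B^{d/p}_{p,1}}$. The product argument is then needed only for $s\le 0$, which lies in the admissible range when $p<\infty$. At $p=\infty$ one has $d/p=0$, so your appeal to the first part ``at $s=d/p$'' is unavailable, and your argument does not give $F(u)\in\dot B^0_{\infty,1}$.

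\textbf{The realization remark.} Boundedness does not force $\|\dot S_ju\|_{L^\infty}\to0$ as $j\to-\infty$: a nonzero constant is bounded, has zero seminorm, and satisfies $\dot S_jc=c$ for all $j$. Working modulo polynomials is also incompatible with a nonlinear $F$, since $F(u+c)-F(u)$ is not a polynomial, so $F$ does not act on equivalence classes. Use instead the $\mathcal S'_h$ convention of \cite{bahouri2011fourier}, in which $\|\dot S_ju\|_{L^\infty}\to0$ is part of the definition.
\begin{itemize}
\item At the lower end of the telescoping sum, this convention together with $F(0)=0$ gives $F(\dot S_ju)\to0$ uniformly.
\item At the upper end, $\sum_{j\ge J}\|\dot\Delta_ju\|_{L^p}\lesssim 2^{-Js}\|u\|_{\dot B^s_{p,\infty}}\to0$ because $s>0$. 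Combined with the Lipschitz bound of $F$ on the ball of radius $C\|u\|_{L^\infty}$, this gives $F(\dot S_Ju)\to F(u)$ in $L^p$.
\end{itemize}
Your claim that $\dot B^{d/p}_{p,1}$ forces $\dot S_ju\to0$ uniformly is correct for $p<\infty$.
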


\begin{lemma}\label{lem;4}
(\cite{bahouri2011fourier,MR3552560})    Let $1 \leq p,\, r \leq \infty$ and $\theta>-\min \big\{\frac{d}{p}, \frac{d}{p'}\big\}$.
Let $f_{0} \in B_{p, r}^{\theta}$, $g \in L^{1}(0, T ; B_{p, r}^{\theta})$, and $v \in L^{\rho}(0, T ; B_{\infty, \infty}^{-M})$ for some $\rho>1$ and $M>0$ such that
\begin{align*}
\nabla v &\in L^{1}\big([0, T] ; B_{p, \infty}^{\frac{d}{p}} \cap L^{\infty}\big), \quad \text{if } \theta<1+\frac{d}{p},\\
\nabla v &\in L^{1}\big([0, T] ; B_{p, r}^{\theta}\big), \quad \text{if } \theta=1+\frac{d}{p},\ r>1,\\
\nabla v &\in L^{1}\big([0, T] ; B_{p, r}^{\theta-1}\big), \quad \text{if } \theta>1+\frac{d}{p}\big(\text{or } \theta=1+\frac{d}{p},\ r=1\big).
\end{align*}
Then the problem (\ref{eq;trans}) has a unique solution $f$ in
\[
\begin{cases}
C([0, T] ; B_{p, r}^{\theta}), & \text{if } r<\infty,\\
\big(\underset{\theta'<\theta}{\bigcap}
 C([0, T] ; B_{p, \infty}^{\theta'})\big) \cap C_{w}([0, T] ; B_{p, \infty}^{\theta}), & \text{if } r=\infty.
\end{cases}
\]
\end{lemma}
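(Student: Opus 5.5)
The plan follows the classical scheme of \cite{bahouri2011fourier}: regularize, solve by characteristics, obtain uniform bounds from the a priori estimate of Lemma \ref{lem;2}, pass to the limit by compactness, and then prove uniqueness and time continuity separately. I treat the case $A\equiv 0$. If $A$ is a zeroth-order Fourier multiplier, as in our application where $A$ is a combination of $(I-\Delta)^{-1}\Delta$, then $Af$ is bounded in $B^\theta_{p,r}$ by Proposition \ref{prop;3}(6). It can therefore be moved to the right-hand side and absorbed by a Gronwall argument, or handled in a final fixed-point step.

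\textbf{Step 1 (approximation).} I set $f_0^n=S_nf_0$, $g^n=S_ng$ and $v^n=S_nv$. The hypothesis $v\in L^\rho(0,T;B^{-M}_{\infty,\infty})$ gives $v^n\in L^\rho(0,T;C_b^\infty)$, so the flow $\psi^n$ of $v^n$ is well defined and smooth in $x$. The formula
\[
f^n(t)=f_0^n\circ(\psi^n_t)^{-1}+\int_0^t g^n(\tau)\circ\psi^n_\tau\circ(\psi^n_t)^{-1}\,d\tau
\]
then gives a smooth solution of the regularized problem with $f^n\in C([0,T];B^\sigma_{p,r})$ for every $\sigma$. This is the only place where the condition on $v$ with $\rho>1$ and $M>0$ is used.

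\textbf{Step 2 (uniform bounds and limit).} I apply Lemma \ref{lem;2} with $p_1=p$ and $\sigma=\theta$, which is admissible since $\theta>-\min\{\frac dp,\frac d{p'}\}$. This yields
\[
\|f^n\|_{L^\infty_T(B^\theta_{p,r})}\le \Big(\|f_0\|_{B^\theta_{p,r}}+\|g\|_{L^1_T(B^\theta_{p,r})}\Big)e^{CV(T)},
\]
where $V'$ is exactly the norm of $\nabla v$ listed in the three cases of the hypothesis, so the bound is uniform in $n$. The case $\theta=1+\frac dp$ with $r>1$ is not literally covered by (\ref{3.14;est}). There I would redo the commutator estimate $\|[v\cdot\nabla,\Delta_j]f\|_{L^p}\lesssim c_j2^{-j\theta}\|\nabla v\|_{B^\theta_{p,r}}\|f\|_{B^\theta_{p,r}}$, using the extra regularity $\nabla v\in B^\theta_{p,r}$ to control the paraproduct $T_{\nabla f}v$, which otherwise loses the logarithm. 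I expect this endpoint commutator to be the main technical obstacle.

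The equation gives $\partial_tf^n$ bounded in $L^1_T(B^{\theta-1-\epsilon}_{p,r})$. By Ascoli's theorem, the local compactness of Proposition \ref{prop;3}(7), and a diagonal argument, a subsequence converges in $C([0,T];B^{\theta-\epsilon}_{p,r})_{\mathrm{loc}}$, hence in $\mathcal S'$. Passing to the limit in the equation and applying the Fatou property of Proposition \ref{prop;3}(5) produces a solution $f\in L^\infty_T(B^\theta_{p,r})$.

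\textbf{Step 3 (uniqueness and continuity).} For uniqueness, the difference $\delta f$ of two solutions solves the homogeneous problem with zero data. Applying Lemma \ref{lem;2} in $B^{\theta-1}_{p,r}$, or in some lower admissible index, gives $\delta f=0$. For time continuity when $r<\infty$, I write $f=S_kf+(I-S_k)f$. Each $\Delta_jf$ is continuous in time with values in $L^p$, since $\partial_t\Delta_jf\in L^1_T(L^p)$. The tail estimate of Lemma \ref{lem;2} applied to the equation for $\Delta_jf$ gives $\sup_t\big(\sum_{j\ge k}2^{jr\theta}\|\Delta_jf\|^r_{L^p}\big)^{1/r}\to0$ as $k\to\infty$, uniformly in $t$, because $f_0$ and $g$ have small high-frequency tails. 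This yields $f\in C([0,T];B^\theta_{p,r})$. For $r=\infty$, interpolating the uniform $B^\theta_{p,\infty}$ bound with the continuity in lower norms gives $f\in C([0,T];B^{\theta'}_{p,\infty})$ for every $\theta'<\theta$. Weak continuity in $B^\theta_{p,\infty}$ then follows from this bound together with convergence in $\mathcal S'$.
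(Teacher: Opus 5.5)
The paper gives no proof of this lemma and simply imports it from \cite{bahouri2011fourier,MR3552560}; your plan is the argument of those references: regularize, solve by characteristics, get uniform bounds from the a priori estimate, pass to the limit by compactness, prove uniqueness from the same estimate, and get time continuity from high-frequency tails, with the case $\theta=1+\frac dp$, $r>1$ handled by using $\nabla v\in B^{\theta}_{p,r}$ to absorb the logarithmic loss in $T_{\nabla f}v$ (the estimate the paper records as Lemma \ref{lem;5}). Two points need tightening: Ascoli requires $\|\partial_tf^n(t)\|_{B^{\theta-1-\epsilon}_{p,r}}\le\beta(t)$ for a single $\beta\in L^1(0,T)$, not a mere $L^1_T$ bound (this holds since $\|S_nh\|\le C\|h\|$, and it uses $\|\Delta_{-1}v\|_{L^\infty}\lesssim\|v\|_{B^{-M}_{\infty,\infty}}$, so that hypothesis is not confined to Step 1), and the uniform-in-time smallness of the tails comes from the Chemin--Lerner $\widetilde L^\infty_T$ form of Lemma \ref{lem;2} rather than from the tails of $f_0$ and $g$ alone, because the commutator also feeds the high frequencies.
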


\begin{lemma}\label{lem;5}
(\cite{bahouri2011fourier,MR3552560})    Let $1 \leq p,\, r \leq\infty$ and $\theta>-\min\big\{\frac{d}{p},\frac{d}{p'}\big\}$. There exists a constant $C$ such that for all solutions $f \in L^{\infty}(0, T ; B_{p, r}^{\theta})$ of (\ref{eq;trans}) with $f_{0}\in B_{p,r}^\theta$ and $g\in L^1([0,T];B_{p,r}^\theta)$,
\begin{align*}
\| f(t)\| _{B_{p, r}^{\theta}} \leq\left\| f_{0}\right\| _{B_{p, r}^{\theta}}+\int_{0}^{t}\left\| g\left(t'\right)\right\| _{B_{p, r}^{\theta}} d t'+\int_{0}^{t} V'\left(t'\right)\left\| f\left(t'\right)\right\| _{B_{p, r}^{\theta}} d t'
\end{align*}
or
\begin{align*}
\| f(t)\| _{B_{p, r}^{\theta}} \leq \mathrm{e}^{C V(t)}\left(\left\| f_{0}\right\| _{B_{p, r}^{\theta}}+\int_{0}^{t} \mathrm{e}^{-C V\left(t'\right)}\left\| g\left(t'\right)\right\| _{B_{p, r}^{\theta}} d t'\right)
\end{align*}
with
\begin{align*}
V^{\prime }(t)=\begin{cases}
\| \nabla v(t)\| _{B_{p,\infty }^{\frac {d}{p}}\cap L^{\infty }},& \text{if } \theta <1+\frac {d}{p},\\
\| \nabla v(t)\| _{B_{p,r}^{\theta }},& \text{if } \theta =1+\frac {d}{p},\ r>1,\\
\| \nabla v(t)\| _{B_{p,r}^{\theta -1}},& \text{if } \theta >1+\frac {d}{p}\big(\text{or } \theta =1+\frac {d}{p},\ r=1\big).
\end{cases}
\end{align*}
If $\theta>0$, then there exists a constant $C=C(p, r, \theta)$ such that
\begin{align*}
\| f(t)\| _{B_{p, r}^{\theta}} \leq & \left\| f_{0}\right\| _{B_{p, r}^{\theta}}+\int_{0}^{t}\| g(\tau)\| _{B_{p, r}^{\theta}} d \tau+C \int_{0}^{t}\Big(\| f(\tau)\| _{B_{p, r}^{\theta}}\left\| \nabla v(\tau)\right\| _{L^{\infty}}\\
&+\left\| \nabla v(\tau)\right\| _{B_{p, r}^{\theta-1}}\left\| \nabla f(\tau)\right\| _{L^{\infty}}\Big) d \tau .
\end{align*}
\end{lemma}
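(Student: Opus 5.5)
The statement just above is Lemma \ref{lem;5}, an a priori estimate for the transport equation \eqref{eq;trans} (with $A\equiv0$, as in Lemma \ref{lem;2}). I would prove it by the classical Littlewood--Paley commutator method of \cite{bahouri2011fourier}.

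\textbf{Localization.} First assume $f$, $v$, $g$ are smooth; the general case follows by mollifying the data and passing to the limit with the Fatou property, Proposition \ref{prop;3}(5). Apply $\Delta_j$ to \eqref{eq;trans}:
\begin{align*}
\partial_t\Delta_j f+v\cdot\nabla\Delta_j f=\Delta_j g+R_j,\qquad R_j:=[v\cdot\nabla,\Delta_j]f.
\end{align*}
Along the flow of $v$ (or by multiplying by $|\Delta_j f|^{p-2}\Delta_j f$ and integrating by parts) we get
\begin{align*}
\|\Delta_jf(t)\|_{L^p}\le\|\Delta_jf_0\|_{L^p}+\int_0^t\Big(\|\Delta_jg\|_{L^p}+\|R_j\|_{L^p}+\tfrac1p\|\operatorname{div}v\|_{L^\infty}\|\Delta_jf\|_{L^p}\Big)\,dt'.
\end{align*}
Multiply by $2^{j\theta}$ and take the $\ell^r$ norm, using Minkowski to move the $\ell^r$ norm inside the time integral. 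This gives the first (integral) form of the estimate, provided we have the commutator bound
\begin{align*}
\Big\|\big(2^{j\theta}\|R_j\|_{L^p}\big)_j\Big\|_{\ell^r}\le C\,V'(t)\,\|f\|_{B^\theta_{p,r}}.
\end{align*}
Gronwall's lemma then yields the exponential form.

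\textbf{Commutator estimate.} This is the main obstacle. Write $v\cdot\nabla f=T_{v}\nabla f+T_{\nabla f}v+R(v,\nabla f)$ and split $R_j$ into five pieces. The first is $[T_{v^k},\Delta_j]\partial_kf$, controlled by $\|\nabla v\|_{L^\infty}2^{-j\theta}c_j\|f\|_{B^\theta_{p,r}}$ using the first-order Taylor commutator identity. Next come $T_{\partial_k\Delta_jf}v^k$ and $\Delta_jT_{\partial_kf}v^k$. The last two are the remainder $\Delta_jR(v^k,\partial_kf)$ and $R(v^k,\partial_k\Delta_jf)$. The paraproduct $T_{\nabla f}v$ is where the regimes split:
\begin{itemize}
\item For $\theta<1+\frac dp$, $S_{j'-1}\nabla f$ can be bounded by $2^{j'(1+\frac dp-\theta)}\|f\|_{B^\theta_{p,r}}$ times the $L^\infty$ norm of $\Delta_{j'}v$, giving $\|\nabla v\|_{B^{d/p}_{p,\infty}\cap L^\infty}$.
\item For $\theta>1+\frac dp$, or $\theta=1+\frac dp$ with $r=1$, use $\|\nabla f\|_{L^\infty}\lesssim\|f\|_{B^\theta_{p,r}}$ and put $v$ in $B^\theta_{p,r}$, i.e.\ $\nabla v\in B^{\theta-1}_{p,r}$.
\item For the borderline $\theta=1+\frac dp$ with $r>1$, $\nabla f\notin L^\infty$. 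Here I would use the $B^{-1}_{\infty,\infty}$ control of $\nabla f$ and pay with one extra derivative on $v$, i.e.\ $\nabla v\in B^\theta_{p,r}$.
\end{itemize}
The remainder terms need $\theta>-\min\{\frac dp,\frac d{p'}\}$, using the $L^{p'}$ duality when $p<2$. The low-frequency block $j=-1$ is handled separately by Bernstein, Proposition \ref{prop2.1}.

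\textbf{Tame form for $\theta>0$.} Keep the two paraproduct roles separate. Bound $T_{\nabla f}v$ by $\|\nabla f\|_{L^\infty}\|\nabla v\|_{B^{\theta-1}_{p,r}}$, bound the remaining pieces by $\|\nabla v\|_{L^\infty}\|f\|_{B^\theta_{p,r}}$, and integrate in time.
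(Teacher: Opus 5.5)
The paper does not prove this lemma; it quotes it from \cite{bahouri2011fourier,MR3552560}. Your outline is the standard argument behind those references: localize with $\Delta_j$, use the $L^p$ bound along the flow, split $R_j=[v\cdot\nabla,\Delta_j]f$ into the five Bony pieces, let the three regimes differ only through $\Delta_jT_{\partial_kf}v^k$, and close with Gronwall. Your treatment of the borderline case $\theta=1+\frac dp$, $r>1$ is right. There $\|S_{j'-1}\nabla f\|_{L^\infty}\le C2^{j'}\|f\|_{B^0_{\infty,\infty}}$, and since only blocks $\Delta_{j'}v$ with $j'\ge1$ enter that paraproduct, Proposition \ref{prop2.1} turns $2^{j'(\theta+1)}\|\Delta_{j'}v\|_{L^p}$ into $C2^{j'\theta}\|\Delta_{j'}\nabla v\|_{L^p}$. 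One step, however, fails as written.

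\textbf{The gap: low frequencies of $v$.} Only $\nabla v$ enters $V'$; a nonzero constant field gives $R_j=0$ and $V'=0$. So $\Delta_{-1}v$ is not controlled at all, and Bernstein's inequality cannot help, since on a ball it bounds derivatives by the function, not conversely. The pieces $T_{\partial_kf}v^k$ and $T_{\partial_k\Delta_jf}v^k$ are harmless, because they involve only $\Delta_{j'}v^k$ with $j'\ge1$. Likewise $[T_{v^k},\Delta_j]\partial_kf$ only sees $\nabla S_{j'-1}v$. But both remainders $R(v^k,\partial_k\Delta_jf)$ and $\Delta_jR(v^k,\partial_kf)$ contain $\Delta_{-1}v^k$, which cannot be estimated in either of them separately.

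The missing idea, which is why \cite{bahouri2011fourier} works with $\widetilde v=(\mathrm{Id}-\Delta_{-1})v$, is to split $v=\Delta_{-1}v+\widetilde v$. All dyadic blocks of $\widetilde v$ are spectrally supported in annuli, so $\|\Delta_{k'}\widetilde v\|_{L^q}\le C2^{-k'}\|\Delta_{k'}\nabla v\|_{L^q}$, and your remainder bounds go through for $\widetilde v$. The $\Delta_{-1}v$ parts recombine into commutators:
\[
R(\Delta_{-1}v^k,\partial_k\Delta_jf)-\Delta_jR(\Delta_{-1}v^k,\partial_kf)=\sum_{k'\in\{-1,0\}}\,\sum_{|j'-k'|\le1}\big[\Delta_{k'}\Delta_{-1}v^k,\Delta_j\big]\partial_k\Delta_{j'}f .
\]
These vanish except for finitely many small $j$. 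They are bounded by $C\|\nabla v\|_{L^\infty}\|f\|_{B^\theta_{p,r}}$ via the same first-order commutator estimate you use for $[T_{v^k},\Delta_j]\partial_kf$.

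\textbf{Two smaller corrections.}
\begin{enumerate}
\item For $\theta<1+\frac dp$, pair $\|S_{j'-1}\nabla f\|_{L^\infty}\le C2^{j'(1+\frac dp-\theta)}c_{j'}$ with $\|\Delta_{j'}v\|_{L^p}$, not with $\|\Delta_{j'}v\|_{L^\infty}$. Here
\[
c_{j'}=\sum_{j''\le j'-2}2^{(j''-j')(1+\frac dp-\theta)}2^{j''\theta}\|\Delta_{j''}f\|_{L^p},
\]
and $\|(c_{j'})\|_{\ell^r}\le C\|f\|_{B^\theta_{p,r}}$ by Young's inequality, precisely because $\theta<1+\frac dp$. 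Keeping this $\ell^r$ sequence on the $f$ side is what lets $v$ be measured only in $B^{d/p}_{p,\infty}$; a bare $\|f\|_{B^\theta_{p,r}}$ would force $\nabla v\in B^{d/p}_{p,r}$.
\item The preliminary mollification is unnecessary and, as stated, circular. Identifying the limit of the regularized solutions with the given $f$ requires uniqueness in $L^\infty(0,T;B^\theta_{p,r})$, which is normally deduced from this very estimate. Since $\Delta_jf$ is already smooth in $x$ for any such $f$, run the flow argument on $\Delta_jf$ directly.
\end{enumerate}
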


\begin{lemma}\label{lem;6}
(\cite{bahouri2011fourier,MR3552560})    Let $1 \leq p<\infty$. Define $\overline{\mathbb{N}}=\mathbb{N} \cup\{\infty\}$. Suppose $f \in L^{1}(0, T ; B_{p, 1}^{\frac{d}{p}})$ and $a_{0} \in B_{p, 1}^{\frac{d}{p}}$. For $n \in \overline{\mathbb{N}}$ denote by $a^{n} \in C([0, T] ; B_{p, 1}^{\frac{d}{p}})$ the solution of
\[
\begin{cases}
\partial _{t}a^{n}+A^{n}\cdot\nabla a^{n}=f,\\
a^{n}(0,x)=a_{0}(x).
\end{cases}
\]
Assume for some $\beta \in L^{1}(0, T)$, $\underset{{n \in \overline{\mathbb{N}}}}{\sup} \left\|A^{n}\right\|_{B_{p, 1}^{1+\frac{d}{p}}} \leq\beta(t)$. If $A^{n}\to A^{\infty}$ in $L^{1}(0, T ; B_{p, 1}^{\frac{d}{p}})$, then the sequence $\{a^{n}\}_{n\in\mathbb{N}}$ converges to $a^{\infty}$ in $C([0, T] ; B_{p, 1}^{\frac{d}{p}})$.
\end{lemma}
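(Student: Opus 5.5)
The statement is Lemma \ref{lem;6}, a convergence result for transport equations with a fixed right-hand side $f$ and convergent velocity fields $A^n$. It is quoted from \cite{bahouri2011fourier,MR3552560}. I would prove it with the standard decomposition into a smooth-data part and a high-frequency remainder, as in the Bahouri--Chemin--Danchin treatment.

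The first step fixes $\varepsilon>0$. Since $p<\infty$, the space $B^{d/p}_{p,1}$ admits frequency truncation, so I choose $j$ with $\|a_0-S_ja_0\|_{B^{d/p}_{p,1}}$ and $\int_0^T\|f-S_jf\|_{B^{d/p}_{p,1}}dt$ smaller than $\varepsilon$; for $f$ the latter follows from dominated convergence in time. For each $n\in\overline{\mathbb N}$ I write $a^n=a^n_j+r^n_j$. Here $a^n_j$ solves the transport equation with velocity $A^n$, data $S_ja_0$ and source $S_jf$, and $r^n_j$ solves it with data $(I-S_j)a_0$ and source $(I-S_j)f$. Lemma \ref{lem;5} at regularity $\theta=d/p$ (the case $\theta<1+d/p$) gives a uniform bound $V'\lesssim\|A^n\|_{B^{1+d/p}_{p,1}}\le\beta(t)$. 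Hence
\[
\sup_n\|r^n_j\|_{L^\infty_T(B^{d/p}_{p,1})}\le C e^{C\int_0^T\beta}\,\varepsilon,
\]
uniformly in $n\in\overline{\mathbb N}$.

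The second step handles the smooth parts. The same lemma at the higher regularity $\theta=1+d/p$ (with $r=1$, so $V'=\|\nabla A^n\|_{B^{d/p}_{p,1}}\lesssim\beta$) bounds $a^n_j$ in $L^\infty_T(B^{1+d/p}_{p,1})$ by $C_j e^{C\int\beta}$, where $C_j\lesssim 2^j$ depends only on $j$. The difference $\delta^n=a^n_j-a^\infty_j$ then solves
\[
\partial_t\delta^n+A^n\cdot\nabla\delta^n=-(A^n-A^\infty)\cdot\nabla a^\infty_j,\qquad \delta^n(0)=0.
\]
Applying Lemma \ref{lem;5} at $\theta=d/p$, together with the product law of Lemma \ref{lem;1} in the algebra $B^{d/p}_{p,1}$, gives
\[
\|\delta^n\|_{L^\infty_T(B^{d/p}_{p,1})}\le e^{C\int_0^T\beta}\int_0^T\|A^n-A^\infty\|_{B^{d/p}_{p,1}}\|\nabla a^\infty_j\|_{B^{d/p}_{p,1}}dt\le C_j e^{C\int\beta}\,\|A^n-A^\infty\|_{L^1_T(B^{d/p}_{p,1})}.
\]
This tends to $0$ as $n\to\infty$ for fixed $j$.

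Combining the two steps gives
\[
\limsup_n\|a^n-a^\infty\|_{L^\infty_T(B^{d/p}_{p,1})}\lesssim\varepsilon.
\]
Since $\varepsilon$ is arbitrary, this yields convergence in $L^\infty_T(B^{d/p}_{p,1})$, and continuity in time is already provided by Lemma \ref{lem;4}.

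The main obstacle is the loss of one derivative in the difference equation. Controlling $(A^n-A^\infty)\cdot\nabla a^\infty$ in $B^{d/p}_{p,1}$ requires $a^\infty\in B^{1+d/p}_{p,1}$, which is unavailable for general data. The frequency cutoff resolves this, because it trades the missing derivative for the constant $C_j$. The point that needs checking is that the remainder bound is uniform in $n$ and $j$, and this is exactly where the hypothesis $\sup_n\|A^n\|_{B^{1+d/p}_{p,1}}\le\beta$ is used.
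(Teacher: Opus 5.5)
Your argument is correct and is essentially the standard proof from the sources the paper cites; the paper itself gives no proof of Lemma~\ref{lem;6}, only the citation. As in those sources, you truncate the data and source with $S_j$, bound the remainder at the $B^{d/p}_{p,1}$ level uniformly in $n\in\overline{\mathbb{N}}$, and use the $2^j$-size $B^{1+d/p}_{p,1}$ bound on the truncated solutions to absorb the derivative loss in $(A^n-A^\infty)\cdot\nabla a^\infty_j$.

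One small correction: $S_ja_0\to a_0$ in $B^{d/p}_{p,1}$ holds because the third index is $1<\infty$, not because $p<\infty$. The restriction $p<\infty$ is what you need for two other steps: the transport estimate at $\theta=d/p$, which requires $\theta>-\min\{d/p,d/p'\}$, and the algebra property of $B^{d/p}_{p,1}$ in your product estimate.
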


\begin{lemma}\label{lem;7}
(\cite{bahouri2011fourier})    Let $\varepsilon \in (0,1)$. There exists a constant $C>0$ such that for any $f \in B_{\infty,\infty}^\varepsilon$,
\begin{align*}
\|f\|_{B^0_{\infty,1}} \leq \frac{C}{\varepsilon} \|f\|_{B_{\infty,\infty}^0} \left(1 + \log \frac{\|f\|_{B_{\infty,\infty}^\varepsilon}}{\|f\|_{B_{\infty,\infty}^0}}\right).
\end{align*}
\end{lemma}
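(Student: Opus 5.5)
The statement immediately above is Lemma \ref{lem;7}, the logarithmic interpolation inequality. It is quoted from \cite{bahouri2011fourier}, but here is how I would prove it directly from the Littlewood--Paley decomposition.

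\emph{Frequency splitting.} Fix an integer $N\ge -1$, to be chosen later, and split the $B^0_{\infty,1}$ norm into low and high frequencies:
\begin{align*}
\|f\|_{B^0_{\infty,1}}=\sum_{j\le N}\|\Delta_j f\|_{L^\infty}+\sum_{j>N}\|\Delta_j f\|_{L^\infty}.
\end{align*}
For the low frequencies, each term satisfies $\|\Delta_j f\|_{L^\infty}\le \|f\|_{B^0_{\infty,\infty}}$. There are $N+2$ of them, so the low part is at most $(N+2)\|f\|_{B^0_{\infty,\infty}}$.

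For the high frequencies, we use $\|\Delta_j f\|_{L^\infty}\le 2^{-j\varepsilon}\|f\|_{B^\varepsilon_{\infty,\infty}}$. Summing the geometric series gives
\begin{align*}
\sum_{j>N}\|\Delta_j f\|_{L^\infty}\le \frac{2^{-(N+1)\varepsilon}}{1-2^{-\varepsilon}}\|f\|_{B^\varepsilon_{\infty,\infty}}.
\end{align*}
Since $\varepsilon\in(0,1)$, we have $1-2^{-\varepsilon}\ge \varepsilon/2$, so this is at most $\frac{2}{\varepsilon}2^{-(N+1)\varepsilon}\|f\|_{B^\varepsilon_{\infty,\infty}}$.

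\emph{Choice of $N$.} We may assume $f\neq0$. Set
\begin{align*}
\Lambda:=\frac{\|f\|_{B^\varepsilon_{\infty,\infty}}}{\|f\|_{B^0_{\infty,\infty}}}\ge 1,
\end{align*}
where $\Lambda\ge1$ holds because $2^{j\varepsilon}\ge 2^{-\varepsilon}$ for $j\ge -1$; if needed, this costs a harmless constant factor. Choose $N$ to be the smallest integer $\ge -1$ with $2^{(N+1)\varepsilon}\ge \Lambda$. This is the main point of the proof, since it is what balances the two parts.

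With this choice, the high part is at most $\frac{2}{\varepsilon}\|f\|_{B^0_{\infty,\infty}}$. For the low part, minimality of $N$ gives $N+1\le 1+\frac{\log_2\Lambda}{\varepsilon}$, so
\begin{align*}
N+2\le \frac{C}{\varepsilon}\bigl(1+\log\Lambda\bigr),
\end{align*}
using $\varepsilon<1$ to absorb the additive constants into $C/\varepsilon$.

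\emph{Conclusion.} Adding the two bounds yields
\begin{align*}
\|f\|_{B^0_{\infty,1}}\le \frac{C}{\varepsilon}\|f\|_{B^0_{\infty,\infty}}\left(1+\log\frac{\|f\|_{B^\varepsilon_{\infty,\infty}}}{\|f\|_{B^0_{\infty,\infty}}}\right),
\end{align*}
which is the claim.

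The only delicate step is the bookkeeping that makes $C$ independent of $\varepsilon$. This comes from combining the bound $1-2^{-\varepsilon}\ge\varepsilon/2$ with the fact that the count $N+2$ of low-frequency blocks is of size $\log\Lambda/\varepsilon$. There is no real obstacle beyond this.
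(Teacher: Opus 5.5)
Your proof is correct. It is the standard low/high frequency splitting argument that Bahouri--Chemin--Danchin use for this inequality, and the paper simply cites that source without reproducing a proof. One small imprecision: because of the $j=-1$ block you only have $\Lambda\ge 2^{-\varepsilon}$, not $\Lambda\ge1$, but in that case $N=-1$ and $1+\log\Lambda\ge 1-\log 2>0$, so, as you say, this only affects the constant.
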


\begin{lemma}\label{lem;8}
(\cite{bahouri2011fourier})     Let $\rho$ be a measurable function from $[t_0,T]$ to $[0,a]$, $\gamma$ a locally integrable function from $[t_0,T]$ to $\mathbb{R}^+$, and $\mu$ a continuous and nondecreasing function from $[0,a]$ to $\mathbb{R}^+$. Assume that, for some nonnegative real number $c$, the function $\rho$ satisfies
\begin{align*}
\rho(t) \leq c + \int_{t_0}^{t} \gamma(t')\mu(\rho(t'))\,dt' \quad \text{for a.e.}\ t\in [t_0,T]. 
\end{align*}
\begin{itemize}
\item If $c$ is positive, then we have, for a.e. $t\in [t_0,T]$,
\begin{align*}
-\mathcal{M}(\rho(t)) + \mathcal{M}(c) \leq \int_{t_0}^{t} \gamma(t')\,dt' \quad \text{with}\quad \mathcal{M}(x)=\int_{x}^{a}\frac{dr}{\mu(r)}. 
\end{align*}
\item If $c=0$ and $\mu$ is an Osgood modulus of continuity, then $\rho=0$ a.e.
\end{itemize}
\end{lemma}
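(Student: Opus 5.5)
The plan is to compare $\rho$ with the absolutely continuous majorant
\begin{align*}
R(t):=c+\int_{t_0}^{t}\gamma(t')\mu(\rho(t'))\,dt' ,
\end{align*}
and then differentiate $\mathcal{M}$ composed with it. Since $\rho$ takes values in $[0,a]$ and $\mu$ is continuous on $[0,a]$, $\mu(\rho)$ is bounded. Hence $\gamma\mu(\rho)\in L^1_{loc}$, so $R$ is absolutely continuous and nondecreasing, with $R(t_0)=c$. By hypothesis $\rho\le R$ a.e., and therefore $\rho\le \min\{R,a\}$ a.e. Because $\mu$ is nondecreasing, we get for a.e. $t$
\begin{align*}
R'(t)=\gamma(t)\mu(\rho(t))\le \gamma(t)\,\mu\big(\min\{R(t),a\}\big).
\end{align*}

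\textbf{Case $c>0$.} Take $\mu>0$ on $(0,a]$; otherwise $\mathcal{M}(c)=+\infty$ and the statement degenerates under the usual convention. Then $\mathcal{M}$ is $C^1$ and nonincreasing on $[c,a]$, with $\mathcal{M}'(x)=-1/\mu(x)$, and it is Lipschitz there since $\mu\ge\mu(c)>0$.

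Let $t_1:=\sup\{t: R(t)<a\}$. On $[t_0,t_1)$ the function $\mathcal{M}\circ R$ is absolutely continuous, and the chain rule gives
\begin{align*}
\frac{d}{dt}\big(-\mathcal{M}(R(t))\big)=\frac{R'(t)}{\mu(R(t))}\le\gamma(t).
\end{align*}
Integrating yields $-\mathcal{M}(R(t))+\mathcal{M}(c)\le\int_{t_0}^t\gamma$ for $t<t_1$. For $t\ge t_1$, we have $\mathcal{M}(\min\{R,a\})=0$, and the bound $\mathcal{M}(c)\le\int_{t_0}^{t_1}\gamma\le\int_{t_0}^{t}\gamma$ follows by letting $t\uparrow t_1$.

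Finally, since $\mathcal{M}$ is nonincreasing and $\rho\le\min\{R,a\}$, we have $-\mathcal{M}(\rho(t))\le-\mathcal{M}(\min\{R(t),a\})$. This gives the first assertion for a.e. $t$.

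\textbf{Case $c=0$ with $\mu$ Osgood.} Here $\int_0^a dr/\mu(r)=+\infty$. For every $\varepsilon>0$ the hypothesis still holds with $c$ replaced by $\varepsilon$. The first case then gives
\begin{align*}
\mathcal{M}(\varepsilon)\le\int_{t_0}^{t}\gamma+\mathcal{M}(\rho(t))\quad\text{for a.e. } t .
\end{align*}
Suppose $\rho(t)>0$ on a set of positive measure. At any such $t$ the right-hand side is finite and independent of $\varepsilon$, because $\gamma$ is locally integrable and $\mathcal{M}(\rho(t))<\infty$ when $\rho(t)>0$. The left-hand side, however, tends to $+\infty$ as $\varepsilon\to0$ by the Osgood condition, which is a contradiction. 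Hence $\rho=0$ a.e. To avoid an $\varepsilon$-dependent null set, take $\varepsilon$ along a countable sequence.

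\textbf{Main obstacle.} The delicate step is the regularity bookkeeping in the first case. One must justify the chain rule for $\mathcal{M}\circ R$, which needs $\mathcal{M}$ Lipschitz on the range of $R$, and this is where $c>0$ is used. One must also handle the possibility that $R$ exceeds $a$, where $\mu$ is undefined; capping at $a$ and the fact that $\mathcal{M}(a)=0$ take care of this. Everything else is monotonicity.
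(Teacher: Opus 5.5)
Your proof is correct and follows the standard argument for Osgood's lemma in Bahouri--Chemin--Danchin, which the paper cites without giving a proof of its own: compare $\rho$ with the absolutely continuous majorant $R$, use $\frac{d}{dt}\bigl(-\mathcal{M}(R)\bigr)=R'/\mu(R)\le\gamma$ together with the monotonicity of $\mathcal{M}$, and for $c=0$ apply the first case with $c=\varepsilon\downarrow0$ along a countable sequence. Your truncation at $a$ is extra care that the reference glosses over; the one loose point is the aside on degenerate $\mu$, where the claim that otherwise $\mathcal{M}(c)=+\infty$ is not accurate, but your chain-rule step only needs $\mu(c)>0$, so the argument stands in the intended setting.
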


\begin{lemma}\label{lem;9}
(\cite{MR3016436})  For $1\leq p\leq \infty$, let $u$ be a strong solution of (\ref{eq;scalar}). Then the $L^p$ norm of $u$ is bounded by
    \begin{align*}
        \|u\|_{L^p(\mathbb{R}^d)}\leq\|u_0\|_{L^p(\mathbb{R}^d)},\  \forall t\in[0,T). 
    \end{align*}
\end{lemma}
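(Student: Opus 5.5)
This is Lemma \ref{lem;9}, the $L^p$ maximum principle for strong solutions of (\ref{eq;scalar}). The plan is to treat $H[u]=(I-\Delta)^{-1}\Delta u = G*u-u$ as a damping term plus a positive averaging operator. Here $G$ is the Bessel kernel, i.e. the kernel of $(I-\Delta)^{-1}$, which satisfies $G\ge0$ and $\int G=1$. The equation then reads
\begin{align*}
\partial_t u+\alpha(u)\cdot\nabla u+u=G*u .
\end{align*}
Since $\|G*u\|_{L^p}\le\|u\|_{L^p}$ by Young's inequality, the linear part cannot increase the $L^p$ norm. The argument reduces to showing that the nonlinear transport term does not contribute.

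\textbf{Case $1\le p<\infty$.} Multiply by $|u|^{p-2}u$ (for $p=1$, use a smooth regularization $\eta_\varepsilon(u)\approx|u|$) and integrate in $x$. The transport term is a divergence: with $\Phi_i'(u)=f_i'(u)|u|^{p-2}u$ and $\Phi_i(0)=0$,
\begin{align*}
\alpha(u)\cdot\nabla u\,|u|^{p-2}u=\nabla\cdot\big(\Phi_1(u),\dots,\Phi_d(u)\big).
\end{align*}
It therefore integrates to zero, because $u\in H^s\subset L^\infty\cap L^2$ decays and $|\Phi_i(u)|\le C|u|^p$. This yields
\begin{align*}
\frac1p\frac{d}{dt}\|u\|_{L^p}^p+\|u\|_{L^p}^p=\int (G*u)|u|^{p-2}u\,dx\le \|G*u\|_{L^p}\|u\|_{L^p}^{p-1}\le\|u\|_{L^p}^p,
\end{align*}
by Hölder and Young. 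Hence $\frac{d}{dt}\|u\|_{L^p}^p\le0$.

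\textbf{Case $p=\infty$.} Either let $p\to\infty$ in the previous bound, which works when $u_0\in L^p$ for large finite $p$ (true for $H^s$ data by interpolation with $L^\infty$), or argue along characteristics. For the latter, let $X(t)$ solve $\dot X=\alpha(u(t,X))$; this flow is well-defined since $\nabla u\in L^\infty$. Then
\begin{align*}
\frac{d}{dt}u(t,X(t))=-u+G*u\le -u(t,X(t))+\sup_x u(t,\cdot).
\end{align*}
A Constantin--Escher-type argument on $\sup_x u(t)$, which is attained or approximated, shows that $\sup u$ is nonincreasing and $\inf u$ is nondecreasing.

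The main technical point is justifying the energy identity for merely strong solutions, in particular the case $p=1$ and the case $1<p<2$, where $|u|^{p-2}u$ is not Lipschitz. I would handle this with the convex regularization $\eta_\varepsilon$, which gives $\frac{d}{dt}\int\eta_\varepsilon(u)\le\int\eta_\varepsilon'(u)(G*u-u)$, and then pass to the limit $\varepsilon\to0$ using dominated convergence.
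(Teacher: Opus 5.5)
The paper gives no proof of Lemma~\ref{lem;9}; it is quoted from \cite{MR3016436}. Your argument is the standard one, and it uses the same tools the paper applies to related estimates: the $|u|^{p-2}u$ multiplier followed by $p\to\infty$ (as in the proof of \eqref{et;1}), and the characteristic argument behind the maximum principle \eqref{ineq;1} in Section~\ref{Blow-up}. The splitting $H[u]=G*u-u$ with $G\ge0$, $\int G=1$ (the paper's $K$), the vanishing of the transport term through $\Phi_i$, and the H\"older--Young bound are all correct once $u(t)\in L^p$ is known, e.g.\ for $p\ge2$ and $H^s$ solutions.

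Two points need tightening.

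\textbf{The case $1\le p<2$.} Take $u_0\in L^p\cap H^s$. You do not know a priori that $u(t)\in L^p$, so there is no dominating function for the limit $\varepsilon\to0$, and the H\"older--Young step has nothing finite to act on. Close the estimate at the regularized level instead. Let $\eta_\varepsilon(v)=(v^2+\varepsilon^2)^{p/2}-\varepsilon^p$. It is convex, satisfies $\eta_\varepsilon(v)\le C_\varepsilon v^2$, and increases to $|v|^p$ as $\varepsilon\downarrow0$. Convexity and Jensen's inequality for the probability measure $G(x-y)\,dy$ give
\begin{align*}
\eta_\varepsilon'(u)(G*u-u)\le\eta_\varepsilon(G*u)-\eta_\varepsilon(u),\qquad \int\eta_\varepsilon(G*u)\,dx\le\int G*\eta_\varepsilon(u)\,dx=\int\eta_\varepsilon(u)\,dx .
\end{align*}
Hence $\frac{d}{dt}\int\eta_\varepsilon(u)\,dx\le0$, and monotone convergence yields $\|u(t)\|_{L^p}\le\|u_0\|_{L^p}$ without any a priori integrability.

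\textbf{The case $p=\infty$.} The paper applies the lemma to solutions in $B^s_{\infty,r}$, e.g.\ $B^1_{\infty,1}$ in Proposition~\ref{prop;4.1}. Such solutions lie in no $L^q$ with $q<\infty$, and their supremum need not be attained. So the $p\to\infty$ limit is unavailable, and a Constantin--Escher argument at a maximum point is not justified there. Integrate your characteristic inequality instead of evaluating at a maximizer. With $B(t)=\sup_xu(t,x)$,
\begin{align*}
u(t,X(t,x))\le \mathrm{e}^{-t}u_0(x)+\int_0^t\mathrm{e}^{-(t-\tau)}B(\tau)\,d\tau .
\end{align*}
Since $X(t,\cdot)$ is onto, this gives $B(t)\le \mathrm{e}^{-t}B(0)+\int_0^t\mathrm{e}^{-(t-\tau)}B(\tau)\,d\tau$. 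Gronwall applied to $\mathrm{e}^tB(t)$ then gives $B(t)\le B(0)$. This is precisely the paper's proof of \eqref{ineq;1}, and together with the symmetric bound for the infimum it settles the $L^\infty$ case.
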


\begin{lemma}\label{lem;10}
(\cite{CHEN20162343})  If $f(t)\in C^1(\mathbb{R})$ satisfies
    \begin{align*}
        f'(t)\leq -af^2(t)+b,
    \end{align*}
    and $f(0)<-\sqrt{\frac{b}{a}}$, where $a,b>0$, then
    \begin{align*}
        f(t)\rightarrow-\infty, \ \text{as} \ t\rightarrow \tilde{t} \leq \frac{1}{2\sqrt{ab}}\log \left( \frac{f(0)-\sqrt{\frac{b}{a}}}{f(0)+\sqrt{\frac{b}{a}}} \right).
    \end{align*}
\end{lemma}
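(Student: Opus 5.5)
The statement to prove is Lemma \ref{lem;10}: a Riccati-type comparison. I will use a direct ODE comparison with the explicit solution of the Riccati equation, with no PDE input.

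Set $\beta=\sqrt{b/a}>0$. By hypothesis $f(0)<-\beta$.

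\emph{Step 1: $f$ stays below $-\beta$.} Let $t_1=\sup\{t: f<-\beta \text{ on } [0,t)\}$. On $[0,t_1)$ we have $f^2>\beta^2$, hence $f'\le -a(f^2-\beta^2)<0$. So $f$ is strictly decreasing there, which gives $f(t)\le f(0)<-\beta$. By continuity $t_1$ cannot be finite unless the solution ceases to exist, so $f(t)<-\beta$ for as long as $f$ is defined.

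\emph{Step 2: integrate the logarithmic quantity.} On that interval, dividing by $f^2-\beta^2>0$ gives
\begin{align*}
\frac{f'}{f^2-\beta^2}\le -a .
\end{align*}
The left side is $\frac{d}{dt}\frac{1}{2\beta}\log\frac{f-\beta}{f+\beta}$. Both $f-\beta$ and $f+\beta$ are negative, so the ratio is positive and greater than $1$. Integrating from $0$ to $t$ yields
\begin{align*}
\log\frac{f(t)-\beta}{f(t)+\beta}\le \log\frac{f(0)-\beta}{f(0)+\beta}-2a\beta t.
\end{align*}

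\emph{Step 3: conclude blow-up.} Since $f<-\beta$, the ratio $\frac{f-\beta}{f+\beta}$ exceeds $1$, so its logarithm is positive. This forces
\begin{align*}
t<\frac{1}{2a\beta}\log\frac{f(0)-\beta}{f(0)+\beta}=\frac{1}{2\sqrt{ab}}\log\frac{f(0)-\sqrt{b/a}}{f(0)+\sqrt{b/a}} .
\end{align*}
Thus $f$ cannot remain finite on $[0,T]$ for any $T$ beyond this bound.

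Equivalently, solving the inequality of Step 2 for $f$ gives $f(t)\le -\beta\,\frac{1+Ke^{-2a\beta t}}{1-Ke^{-2a\beta t}}$ with $K=\frac{f(0)-\beta}{f(0)+\beta}^{-1}\in(0,1)$. This upper bound tends to $-\infty$ as $t\uparrow \tilde t_0:=\frac{1}{2a\beta}\log\frac{f(0)-\beta}{f(0)+\beta}$. Since $f$ is monotone decreasing, $f(t)\to-\infty$ at some $\tilde t\le\tilde t_0$.

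\emph{Main point of care.} The lemma formally assumes $f\in C^1(\mathbb{R})$, which would contradict blow-up. It must therefore be read as saying that $f$ exists on a maximal interval $[0,\tilde t)$. The only delicate step is Step 1: keeping $f$ below $-\beta$, so that the division is legitimate and the logarithm well defined. The strict monotonicity argument handles this.
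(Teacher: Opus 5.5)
Your argument is correct, and it is the standard proof of this Riccati comparison. The paper does not prove the lemma; it only cites it from the reference, so there is no different route to compare against.

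There is one sign slip, in your ``equivalently'' remark. Inverting $R(t)\le R(0)e^{-2a\beta t}$, with $R=\frac{f-\beta}{f+\beta}$, through the increasing map $R\mapsto-\beta\frac{R+1}{R-1}$ on $(1,\infty)$ gives $f(t)\le-\beta\,\frac{1+Ke^{2a\beta t}}{1-Ke^{2a\beta t}}$ with $K=R(0)^{-1}\in(0,1)$. The exponent should be $+2a\beta t$: with your $e^{-2a\beta t}$ the bound would increase towards $-\beta$ rather than tend to $-\infty$ as $t\uparrow\tilde t_0$. Step 3 does not depend on this formula and is unaffected.
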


\begin{lemma}\label{lem;11}
(\cite{bahouri2011fourier})Let $s>0$ and $1\le p\le\infty$.
\begin{enumerate}
    \item $B_{p,1}^s(\mathbb{R}^d)\cap L^\infty(\mathbb{R}^d)$ is a Banach algebra. Moreover, $B_{\infty,1}^0(\mathbb{R}^d)\hookrightarrow L^\infty(\mathbb{R}^d)\hookrightarrow B_{\infty,\infty}^0(\mathbb{R}^d)$.
    \item
    \begin{align*}
    \lim_{j\to\infty}\|S_j u - u\|_{B_{p,1}^s}=0
    \end{align*}
    \item For any $u,v\in B_{p,1}^s(\mathbb{R}^d)\cap L^\infty(\mathbb{R}^d)$, we have
    \begin{align*}
    \|uv\|_{B_{p,1}^s}\le C\big(\|u\|_{B_{p,1}^s}\|v\|_{L^\infty}+\|v\|_{B_{p,1}^s}\|u\|_{L^\infty}\big).
    \end{align*}
    We also have the following interpolation inequality
    \begin{align*}
    \|u\|_{B_{\infty,1}^1}\le C\|u\|_{B_{\infty,\infty}^0}^{\frac12}\|u\|_{B_{\infty,\infty}^2}^{\frac12}.
    \end{align*}
    \item Let $m\in\mathbb{R}$ and $f$ be an $S^m-$multiplier (i.e., $f:\mathbb{R}^d\to\mathbb{R}^d$ is smooth and satisfies that $\forall \alpha\in\mathbb{N}^d$, there exists a constant $C_\alpha$ such that $|\partial^\alpha f(\xi)|\le C_\alpha(1+|\xi|)^{m-|\alpha|}$ for all $\xi\in\mathbb{R}^d$). Then the operator $f(D)$ is continuous from $B_{p,1}^s(\mathbb{R}^d)$ to $B_{p,1}^{s-m}(\mathbb{R}^d)$.
\end{enumerate}
\end{lemma}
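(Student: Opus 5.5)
These are standard facts from \cite{bahouri2011fourier}, so the plan is to derive each item from the Littlewood--Paley decomposition, Bernstein's inequalities (Proposition \ref{prop2.1}) and the Bony decomposition. I would take them in the order (3) product estimate, then (1), then (2), the interpolation inequality in (3), and finally (4), since (1) follows from (3).

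For the product estimate in (3), write $uv=T_uv+T_vu+R(u,v)$.
\begin{itemize}
\item For the paraproduct $T_uv$, each piece $S_{j-1}u\,\Delta_jv$ has Fourier support in an annulus $2^j\widetilde{\mathcal C}$. Only indices $|j-j'|\le 4$ contribute to $\Delta_{j'}(T_uv)$, and $\|S_{j-1}u\|_{L^\infty}\le C\|u\|_{L^\infty}$. This gives $\|T_uv\|_{B^s_{p,1}}\le C\|u\|_{L^\infty}\|v\|_{B^s_{p,1}}$ for every real $s$. The term $T_vu$ is symmetric.
\item For the remainder $R(u,v)$, each block $\Delta_ku\,\widetilde\Delta_kv$ has Fourier support only in a ball $2^k\mathcal B$, so $\Delta_{j}R(u,v)=\sum_{k\ge j-N_0}\Delta_j(\Delta_ku\widetilde\Delta_kv)$. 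Hence
\[
2^{js}\|\Delta_jR(u,v)\|_{L^p}\le C\|v\|_{L^\infty}\sum_{k\ge j-N_0}2^{(j-k)s}\,2^{ks}\|\Delta_ku\|_{L^p}.
\]
\item Young's inequality for series then closes the estimate in $\ell^1$. This step needs $s>0$, and it is the only place where positivity of $s$ enters.
\end{itemize}

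For (1), the algebra property is now immediate from (3). For the embeddings:
\begin{itemize}
\item $B^0_{\infty,1}\hookrightarrow L^\infty$ follows from summing $\|\Delta_ju\|_{L^\infty}$, since $u=\sum_j\Delta_ju$ converges in $L^\infty$.
\item $L^\infty\hookrightarrow B^0_{\infty,\infty}$ holds because $\Delta_j$ is convolution with $2^{jd}h(2^j\cdot)$, whose $L^1$ norm is bounded uniformly in $j$.
\end{itemize}

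For (2), note that $\|S_ju-u\|_{B^s_{p,1}}\le C\sum_{j'\ge j-1}2^{j's}\|\Delta_{j'}u\|_{L^p}$, up to finitely many neighbouring blocks. This is the tail of a convergent series, so it tends to $0$.

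The interpolation inequality in (3) is where I expect the only mild care to be needed.
\begin{itemize}
\item Split $\|u\|_{B^1_{\infty,1}}=\sum_{j<N}2^j\|\Delta_ju\|_{L^\infty}+\sum_{j\ge N}2^j\|\Delta_ju\|_{L^\infty}$.
\item Bound the low part by $C2^N\|u\|_{B^0_{\infty,\infty}}$ and the high part by $C2^{-N}\|u\|_{B^2_{\infty,\infty}}$.
\item Choose $N$ with $2^{2N}\approx\|u\|_{B^2_{\infty,\infty}}/\|u\|_{B^0_{\infty,\infty}}$. When this ratio is below $1$, i.e.\ $u$ is concentrated at low frequency, take $N=0$ and use $\|u\|_{B^0_{\infty,\infty}}\le\|u\|_{B^2_{\infty,\infty}}$.
\end{itemize}

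For (4), by Bernstein's inequality it suffices to show $\|f(D)\Delta_ju\|_{L^p}\le C2^{jm}\|\Delta_ju\|_{L^p}$. For $j\ge0$, write $f(D)\Delta_j=\big(f\,\widetilde\varphi(2^{-j}\cdot)\big)(D)\Delta_j$ with $\widetilde\varphi\equiv1$ on the support of $\varphi$. The kernel of $f(\xi)\widetilde\varphi(2^{-j}\xi)$, rescaled by $2^j$, has $L^1$ norm $\lesssim2^{jm}$. This follows by integrating by parts $d+1$ times and using the symbol bounds $|\partial^\alpha f|\le C_\alpha(1+|\xi|)^{m-|\alpha|}$. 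The block $j=-1$ is handled identically with $\chi$ in place of $\varphi$. Young's convolution inequality then gives the claim.
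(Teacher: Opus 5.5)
Your proposal is correct and follows the standard arguments of \cite{bahouri2011fourier}, to which the paper simply defers without giving a proof: the Bony decomposition with $s>0$ used only in the remainder, the tail of a convergent $\ell^1$ series for $S_ju\to u$, dyadic splitting and optimization in $N$ for the interpolation inequality, and $L^1$ bounds on the kernel of $f(\xi)\widetilde\varphi(2^{-j}\xi)$. The only slip is cosmetic: because of the block $j=-1$ one has $\|u\|_{B^0_{\infty,\infty}}\le 4\|u\|_{B^2_{\infty,\infty}}$ rather than the inequality with constant $1$, so your low-frequency case is really $\|u\|_{B^2_{\infty,\infty}}/\|u\|_{B^0_{\infty,\infty}}\in[\tfrac14,1)$, where taking $N=0$ still works and only the constant changes.
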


\section{Local well-posedness}\label{Local well-posedness}
\begin{proof}[\normalfont\bfseries Proof of Theorem~\ref{thm;1}]
The argument for the local well-posedness theory within the framework of subcritical spaces $E^s_{p,r}$ ($s>1+\frac{d}{p}$, $1\leq p,r\leq \infty$ or $s = 1+\frac{d}{p},r=1$ with $1\leq p<\infty$) consists of three key steps outlined below.\\
\textbf{Step 1. Existence.}\\
    Set $u^0 = 0$. Define a sequence $\{u^n\}_{n\in N}$ of smooth functions by solving the following linear transport equations:
    \[
    \begin{cases}
        u^{n+1}_t + \alpha(u^n) \cdot \nabla u^{n+1} = H[u^n],\\
        u^{n+1}|_{t = 0} = u_0.
    \end{cases}
    \]
    First of all, due to Proposition \ref{prop;3}, we get
    \begin{align}\label{est}
        \|H[u]\|_{B^s_{p,r}}\leq \|u\|_{B^s_{p,r}}.
    \end{align}
        Combining Lemma\ \ref{lem;4} and (\ref{est}), we obtain a unique solution. Moreover, Lemma\ \ref{lem;5} and (\ref{est}) together give that 
    \begin{align}\label{ineq;uniform}
        \|u^{n+1}\|_{B^s_{p,r}}\leq \left(\|u_0\|_{B^s_{p,r}}+\int^t_0\mathrm{e}^{-C\int^{t'}_0\| \nabla\alpha(u^n)\|_{B^{s-1}_{p,r}}dt''}\|u^n\|_{B^s_{p,r}}dt'\right)\mathrm{e}^{C\int^{t}_0\| \nabla\alpha(u^n)\|_{B^{s-1}_{p,r}}dt'}.
    \end{align}
    Assume $\|u^n\|_{B^s_{p,r}}\leq \frac{2\|u_0\|_{B^s_{p,r}}}{1-2Ct\|u_0\|_{B^s_{p,r}}}$, $T<\min\{ \frac{1}{2C\|u_0\|_{B^s_{p,r}}},\frac{\log2}{C}\}$, combining Lemma\ \ref{lem;3} and (\ref{ineq;uniform}) we have
    \begin{align}\label{et;ini}
        \|u^{n+1}(t)\|_{B_{p,r}^s}
&\leq \|u_0\|_{B_{p,r}^s} \mathrm{e}^{C\int_0^t \|u^n\|_{B_{p,r}^s}d\tau} \times \mathrm{e}^{Ct} \\
&\leq \frac{2\|u_0\|_{B_{p,r}^s}}{1 - 2C\|u_0\|_{B_{p,r}^s} t},
\qquad \forall\, t\in [0,T].
    \end{align}
    Therefore $\{u^n\}$ is uniformly bounded in $L^{\infty}([0,T];{B^s_{p,r}})$.
    Then we will use the compactness method for approximating sequence $\{u^n\}_{n\in N}$ to get a solution $u$ of (\ref{eq;scalar}). Since $u^n$ is uniformly bounded in $L^{\infty}([0,T];{B^s_{p,r}})$ and $\|H[u]\|_{B^s_{p,r}}\leq \|u\|_{B^s_{p,r}}$, we deduce that $\partial_tu^n$ is uniformly bounded in $L^{\infty}([0,T];{B^{s-1}_{p,r}})$. Thus, $u^n$ is uniformly bounded in $C([0,T];{B^s_{p,r}}) \cap C^{\frac{1}{2}}([0,T];{B^{s-1}_{p,r}})$.

    Let $\{\phi_j\}_{j\in N}$ be a sequence of smooth functions with value in $[0,1]$ supported in the ball $B(0,j+1)$ and equal to 1 on $B(0,j)$. The map $z \rightarrow\phi_j z$ is compact from ${B^s_{p,r}}$ to ${B^{s-1}_{p,r}}$ by Theorem 2.94 in \cite{bahouri2011fourier}. Taking advantage of the Ascoli's Theorem and Cantor's diagonal process, there exists $u_j$ such that for all $\phi \in D$, $\phi_j u^n \rightarrow  u_j$ for $j \in N$. Combining the uniform boundedness of $u_n$ and Fatou's proposition for Besov spaces, we obtain a limit function $u \in L^{\infty}([0,T];{B^{s}_{p,r}})$. By virtue of interpolation inequality, $\phi u^n$ tends to $\phi u$ in $C([0,T];B^{s-\varepsilon}_{p,r})$ for any $\varepsilon>0$. It is then pass to the limit to conclude $u$ is a solution of (\ref{eq;scalar}).\\
\textbf{Step 2. Uniqueness.}\\
    Suppose that we are given 
    \begin{align*}
        (u,v)\in \left( C ([0,T];{B^{s}_{p,r}})\cap C^1([0,T];{B^{s-1}_{p,r}}) \right)^2,
    \end{align*}
    two solutions of (\ref{eq;scalar}) with initial data $u_0,v_0\in{B^{s}_{p,r}}$. Denote $w:=u-v, w_0 = u_0-v_0.$ 
    It is obvious that $w$ solves the equation 
    \begin{align*}
        \partial_tw+\alpha(u)\cdot\nabla w = \left( \alpha(v)-\alpha(u) \right)\cdot\nabla v+H[w]
    \end{align*}
    According to Lemma\ \ref{lem;2}, Lemma\ \ref{lem;1} and (\ref{est}), we obtain that
    \begin{align*}
        \|w\|_{{B^{s-1}_{p,r}}}\leq&\|w_0\|_{B^{s-1}_{p,r}}\mathrm{e}^{C\int^t_0\|\nabla \alpha(u)\|_{B^{s-1}_{p,r}}dt'}\\
        &+C\int^t_0\mathrm{e}^{-C\int^t_{t'}\|\nabla\alpha(u)\|_{B^{s-1}_{p,r}}dt''}(\|\alpha(u)-\alpha(v)\|_{B^{s-1}_{p,r}}\|v\|_{B^{s}_{p,r}}+\|w\|_{B^{s-1}_{p,r}})dt'\\
        \leq&\|w_0\|_{B^{s-1}_{p,r}}\mathrm{e}^{C\int^t_0\| \alpha(u)\|_{B^{s}_{p,r}}dt'}\\
        &+C\int^t_0\mathrm{e}^{-C\int^t_{t'}\|\alpha(u)\|_{B^{s}_{p,r}}dt''}\|w\|_{B^{s-1}_{p,r}} \left( (\|u\|_{B^{s}_{p,r}}+\|v\|_{B^{s}_{p,r}})\|v\|_{B^{s}_{p,r}}+1\right)dt'.
    \end{align*}
    Applying Gronwall's inequality completes the proof.\\
\textbf{Step 3. The continuous dependence.}\\
    Assume $u^n_0$ tends to $u^{\infty}_0$ in ${B^{s}_{p,r}}$, we find the solutions $u^n,u^{\infty}$ with a common lifespan $T$. By \textbf{step 1-step 2}, we have $u^n, u^{\infty}$ are uniformly bounded in $L^{\infty}([0,T];{B^{s}_{p,r}})$ and
    \begin{align*}
        \|(u^n-u^{\infty})(t)\|_{B^{0}_{p,\infty}}\leq C\|u^n_0 -u^{\infty}_0\|_{B^{s}_{p,r}},\forall t\in [0,T].
    \end{align*}
    Taking advantage of interpolation inequality, we see that $u^n\rightarrow u^{\infty}$ in $C([0,T];{B^{s-\varepsilon}_{p,r}})$ for any $\varepsilon>0$. Choosing $\varepsilon = 1$, we gain $u^n\rightarrow u^{\infty}$ in $C([0,T];{B^{s-1}_{p,r}})$.

    In order to prove $u^n\rightarrow u^{\infty}$ in $C([0,T];{B^{s}_{p,r}})$, we only need to prove $\nabla u^n\rightarrow \nabla u^{\infty}$ in $C([0,T];{B^{s-1}_{p,r}})$. Let $v^n_i = u^n_{x_i}$. Split $v^n_i$ into $w^n_i+z^n_i$ with $(w^n_i,z^n_i)$ satisfying
    \[
    \begin{cases}
        \partial_t w_i^n + \alpha(u^n)\cdot\nabla w_i^n = \partial_{x_i}\left(H[u^{\infty}]\right)-\partial_{x_i}\left(\alpha(u^{\infty})\right)\cdot \nabla u^\infty,\\
        w_i^n(0,x) = v^\infty_{i,0} = \partial_{x_i}u^{\infty}_0
    \end{cases}
    \]
 and
    \[
    \begin{cases}
        \partial_tz_i^n+\alpha(u^n)\cdot \nabla z_i^n= \partial_{x_i}\left( H[u^n]-H[u^{\infty}]\right)-\left( \partial_{x_i}(\alpha(u^n))\cdot \nabla u^n-\partial_{x_i}(\alpha(u^{\infty}))\cdot \nabla u^{\infty} \right),\\
        z_i^n(0,x) = v^n_{i,0}-v^{\infty}_{i,0}=\partial_{x_i}u^n_0-\partial_{x_i}u^{\infty}_0.
    \end{cases}
    \]
    On the one hand, Lemma\ \ref{lem;6} ensures $w^n \rightarrow w^{\infty}$ in $C([0,T];{B^{s-1}_{p,r}})$.
    As
    \begin{align*}
        \|\partial_{x_i}\left( H[u^n]-H[u^{\infty}]\right)\|_{B^{s-1}_{p,r}}\leq C\|u^n-u^{\infty}\|_{B^{s}_{p,r}}\leq C\left(\|u^n-u^{\infty}\|_{B^{s-1}_{p,r}}+\sum_{j=1}^d\|u^n_{x_j}-u^{\infty}_{x_j}\|_{B^{s-1}_{p,r}}\right),
    \end{align*}
    it follows from Lemma\ \ref{lem;2} that
    \begin{align*}
        \|z_i^n(t)\|_{B^{s-1}_{p,r}}\leq C\left( \|v_{i}^n(0)-v_{i}^{\infty}(0)\|_{B^{s-1}_{p,r}}+\int^t_0\|u^n-u^{\infty}\|_{B^{s-1}_{p,r}}+\sum_{j=1}^d\|w^n_j-w^{\infty}_j\|_{B^{s-1}_{p,r}}+\sum_{j=1}^d\|z^n_j\|_{B^{s-1}_{p,r}}dt'\right).
    \end{align*}
    Using the fact that $v^n_0 \rightarrow v^{\infty}_0$ in ${B^{s-1}_{p,r}}$, $u^n \rightarrow u^{\infty}$ in $C([0,T];{B^{s-1}_{p,r}})$ and $w^n \rightarrow w^{\infty}$ in $C([0,T];{B^{s-1}_{p,r}})$.
   Applying the Gronwall's inequality, we obtain that $z^n_i$ tends to $0$ in $C([0,T];{B^{s-1}_{p,r}})$. In light of Lemmas \ref{lem;4}-\ref{lem;5}, we obtain $z^{\infty}=0$ in $C([0,T];{B^{s-1}_{p,r}})$.
    Thus
    \begin{align*}
        \|v^n_i-v^{\infty}_i\|_{L^{\infty}([0,T];{B^{s-1}_{p,r}})}&\leq\|w^n_i-w^{\infty}_i\|_{L^{\infty}([0,T];{B^{s-1}_{p,r}})}+\|z^n_i-z^{\infty}_i\|_{L^{\infty}([0,T];{B^{s-1}_{p,r}})}\\
        &\leq\|w^n_i-w^{\infty}_i\|_{L^{\infty}([0,T];{B^{s-1}_{p,r}})}+\|z^n_i\|_{L^{\infty}([0,T];{B^{s-1}_{p,r}})}\rightarrow0  \ \text{as} \ n\rightarrow\infty,
    \end{align*}
    that is 
    \begin{align*}
        u^n_{x_i}\rightarrow u^{\infty}_{x_i} \ \text{in}\ C([0,T];{B^{s-1}_{p,r}}).
    \end{align*}
    We thus prove the continuous dependence of (\ref{eq;scalar}) in $C([0,T];{B^{s}_{p,r}})$ with $1\leq p < +\infty$. 
    Consequently, we finish the proof combining with \textbf{Step 1-Step 3}.

    For the endpoint case in $E^1_{\infty,1}$, the existence part follows by the same argument as before thus we only need to prove the uniqueness and the continuous dependence. Before that, let us present a useful lemma.
    \begin{lemma}
    Let $u,v \in L^{\infty} ( [0,T],B^1_{\infty,1} )$ be two solutions of (\ref{eq;scalar}) associated with $u_0$ and $v_0$. Then we have the estimates for the difference $w = u-v$
    \begin{align}
        \|w\|_{L^\infty} &\leq C \|w_0\|_{L^\infty}, \label{et;1} \\ 
        \|w\|_{B^1_{\infty,1}}&\leq C\left( \|w_0\|_{B^1_{\infty,1}}+\int_0^t\|\nabla u\|_{B^1_{\infty,1}}\|w\|_{L^\infty}d\tau \right). \label{et;2}
    \end{align}
\end{lemma}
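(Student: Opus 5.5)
We work with the equation satisfied by the difference $w=u-v$,
\begin{align*}
\partial_t w+\alpha(u)\cdot\nabla w=\bigl(\alpha(v)-\alpha(u)\bigr)\cdot\nabla v+H[w],\qquad w|_{t=0}=w_0,
\end{align*}
and prove the two estimates separately: a maximum-principle argument along characteristics for the $L^\infty$ bound, and a transport estimate in $B^1_{\infty,1}$ for the second bound. Throughout, $C$ may depend on $T$ and on $\sup_{t\le T}\bigl(\|u\|_{B^1_{\infty,1}}+\|v\|_{B^1_{\infty,1}}\bigr)$. This is harmless, because both solutions lie in $L^\infty([0,T];B^1_{\infty,1})\hookrightarrow L^\infty([0,T];W^{1,\infty})$.

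For \eqref{et;1}, the velocity $\alpha(u)$ is Lipschitz in $x$, so the flow $X(t,x)$ with $\partial_tX=\alpha(u)(t,X)$ is a bi-Lipschitz homeomorphism. Integrating along characteristics gives
\begin{align*}
\|w(t)\|_{L^\infty}\le\|w_0\|_{L^\infty}+\int_0^t\Bigl(\|\alpha(u)-\alpha(v)\|_{L^\infty}\|\nabla v\|_{L^\infty}+\|H[w]\|_{L^\infty}\Bigr)d\tau .
\end{align*}
The first integrand is at most $\|\alpha'\|_{L^\infty(B_R)}\|\nabla v\|_{L^\infty}\|w\|_{L^\infty}$, where $R$ bounds $\|u\|_{L^\infty}$ and $\|v\|_{L^\infty}$ (by Lemma \ref{lem;9} or directly from the $B^1_{\infty,1}$ bound). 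For the second, write $H[w]=(I-\Delta)^{-1}w-w$. The Bessel kernel $G$ of $(I-\Delta)^{-1}$ is positive with $\|G\|_{L^1}=1$, so $\|H[w]\|_{L^\infty}\le 2\|w\|_{L^\infty}$. Gronwall's inequality then yields \eqref{et;1} with $C=\exp\bigl(C\int_0^T(1+\|\nabla v\|_{L^\infty})d\tau\bigr)$. The characteristic argument is justified for Lipschitz $w$, which holds here.

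For \eqref{et;2}, we apply Lemma \ref{lem;2} (equivalently the first estimate of Lemma \ref{lem;5}) with $\sigma=1$, $p=p_1=\infty$, $r=1$, which is the case $\sigma=1+d/p_1$, $r=1$. This gives
\begin{align*}
\|w\|_{B^1_{\infty,1}}\le\|w_0\|_{B^1_{\infty,1}}+\int_0^t\Bigl(\|(\alpha(v)-\alpha(u))\cdot\nabla v\|_{B^1_{\infty,1}}+\|H[w]\|_{B^1_{\infty,1}}+C\|\nabla\alpha(u)\|_{B^0_{\infty,1}}\|w\|_{B^1_{\infty,1}}\Bigr)d\tau .
\end{align*}
By \eqref{est}, $\|H[w]\|_{B^1_{\infty,1}}\le\|w\|_{B^1_{\infty,1}}$. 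By Lemma \ref{lem;3}, $\|\nabla\alpha(u)\|_{B^0_{\infty,1}}\le C\|u\|_{B^1_{\infty,1}}$, which is bounded.

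The product term is where the right-hand side $\|\nabla u\|_{B^1_{\infty,1}}\|w\|_{L^\infty}$ appears, and it needs care. Write $\alpha(v)-\alpha(u)=-w\,g$ with $g=\int_0^1\alpha'(v+\theta w)\,d\theta$, which is bounded in $B^1_{\infty,1}\cap L^\infty$. By Lemma \ref{lem;11}(3),
\begin{align*}
\|w g\nabla v\|_{B^1_{\infty,1}}\le C\Bigl(\|w\|_{B^1_{\infty,1}}\|g\nabla v\|_{L^\infty}+\|w\|_{L^\infty}\|g\nabla v\|_{B^1_{\infty,1}}\Bigr).
\end{align*}
Since $\|g\nabla v\|_{B^1_{\infty,1}}\le C\|\nabla v\|_{B^1_{\infty,1}}$, the second term gives $\|\nabla v\|_{B^1_{\infty,1}}\|w\|_{L^\infty}$. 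The first term is absorbed by Gronwall.

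The main obstacle is that the statement has $\nabla u$ rather than $\nabla v$. The roles of $u$ and $v$ are symmetric, so we rewrite the equation as
\begin{align*}
\partial_tw+\alpha(v)\cdot\nabla w=(\alpha(v)-\alpha(u))\cdot\nabla u+H[w],
\end{align*}
transporting along $\alpha(v)$ instead. The same estimate then produces $\|\nabla u\|_{B^1_{\infty,1}}\|w\|_{L^\infty}$. A final Gronwall step gives \eqref{et;2}, with $C=\exp\bigl(C\int_0^T(1+\|u\|_{B^1_{\infty,1}}+\|v\|_{B^1_{\infty,1}})d\tau\bigr)$.

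Note that $\nabla u\in B^1_{\infty,1}$ is not guaranteed for general solutions. The estimate is meant to be used with one solution smoothed, as in the Bona–Smith argument, so the integral may be infinite in general, and the inequality is then trivially true.
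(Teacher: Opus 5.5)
Your proposal is correct. For \eqref{et;2} it is the paper's argument: the $B^1_{\infty,1}$ transport estimate (case $\sigma=1+d/p_1$, $r=1$, $p=p_1=\infty$) applied to $\partial_t w+\alpha(v)\cdot\nabla w=(\alpha(v)-\alpha(u))\cdot\nabla u+H[w]$, then a tame product estimate pairing $\|w\|_{L^\infty}$ with $\|\nabla u\|_{B^1_{\infty,1}}$, then Gronwall. Your detour through the $\alpha(u)$-form is harmless but unnecessary, since swapping $u$ and $v$ only replaces $w$ by $-w$.

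For \eqref{et;1} you take a genuinely different route. The paper tests the equation with $p|w|^{p-2}w$ and bounds the contributions of $\operatorname{div}\alpha(v)$, $(\alpha(v)-\alpha(u))\cdot\nabla u$ and $H[w]$. This gives a Gronwall bound for $\|w\|_{L^p}$ with constant $C(\|\nabla u\|_{L^\infty}+\|\nabla v\|_{L^\infty}+1)$ uniform in $p$, and the paper then lets $p\to\infty$. You instead integrate along the bi-Lipschitz flow of $\alpha(u)$ and use that the Bessel kernel has unit mass.

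What your route buys:
\begin{itemize}
\item It works directly in $L^\infty$ and uses no divergence bound on the velocity.
\item More importantly, it does not need $w(t)\in L^{p_0}$ for some finite $p_0$. The hypothesis $u,v\in B^1_{\infty,1}$ does not provide this (think of periodic differences), and without it the paper's energy identity and the limit $p\to\infty$ are vacuous.
\end{itemize}
The price is constructing the flow and justifying the chain rule along it. That is unproblematic here, because $B^1_{\infty,1}\hookrightarrow C^1_b$ and the equation gives $\partial_t w\in L^\infty_t B^0_{\infty,1}$, so $w$ is a classical solution. Conversely, the energy method yields $L^p$ stability for every finite $p\ge 2$ at once when the data are integrable.

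Your closing caveat is also on target. In Step~5 the lemma must be applied with the smoothed solution $\mathbf{S}_t(S_Nu_0)$ in the role of $u$. That is exactly what the bound $\|\mathbf{S}_t(S_Nu_0)\|_{B^2_{\infty,1}}\le C2^N\|u_0\|_{B^1_{\infty,1}}$ is for.
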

\begin{proof}
    It is obvious that $w$ solves the equation
    \begin{align}\label{eq;difference}  
    \begin{cases}
    \partial_t w +\alpha(v)\cdot \nabla w= \left( \alpha(v)-\alpha(u) \right)\cdot \nabla u + H[w],\\
    w(0) = w_0.
    \end{cases}
    \end{align}
    Let $p\geq2$, taking the inner product of (\ref{eq;difference}) with $p|w|^{p-2}w$, then one has
    \begin{align*}
        \frac{d}{dt}\|w\|_{L^p}^p=\int_{\mathbb{R}^d}\operatorname{div}(\alpha(v))|w|^p dx+p\int_{\mathbb{R}^d}\big(\alpha(v)-\alpha(u)\big)\cdot\nabla u\,|w|^{p-2}w dx+p\int_{\mathbb{R}^d}H[w]|w|^{p-2}w dx.
    \end{align*}
    Using Young inequality, H\"{o}lder inequality and Lemma \ref{lem;3}, we have
    \begin{align*}
        \frac{d\|w\|_{L^p}^p}{dt} &\leq C\|\nabla v\|_{L^\infty}\|w\|_{L^p}^p + \|\nabla u\|_{L^\infty}\|w\|_{L^p}^p + p\int_{\mathbb{R}^d} H[w]|w|^{p-2}w\,dx
\\
        &\leq C\left( \|\nabla u\|_{L^\infty}+\|\nabla v\|_{L^\infty}+1 \right)\|w\|^p_{L^p}.
    \end{align*}
    Using the embedding $B_{\infty,1}^1(\mathbb{R}^d) \hookrightarrow \mathcal{C}^{0,1}(\mathbb{R}^d)$, and integrating the above differential inequality with respect to time $t \in [0, T]$ yields
\begin{align*}
    \|w\|_{L^p} \leq \|w_0\|_{L^p} \exp\left(C\int_0^T \big(\| u\|_{B_{\infty,1}^1} + \|v\|_{B_{\infty,1}^1}+1 \big)ds\right) \leq C\|w_0\|_{L^p}.
\end{align*}
Let $p \to \infty$, we obtain the desired estimate (\ref{et;1}).
Applying Lemma \ref{lem;3}, Lemma \ref{lem;5} and Lemma \ref{lem;11} to (\ref{eq;difference}) yields
\begin{align*}
    \|w\|_{B^1_{\infty,1}}-\|w_0\|_{B^1_{\infty,1}} \leq C\int^t_0\left( \|u\|_{B^1_{\infty,1}} +\|v\|_{B^1_{\infty,1}}+1 \right)\|w\|_{B^1_{\infty,1}}+\|\nabla u\|_{B^1_{\infty,1}}\|w\|_{L^\infty}d\tau.
\end{align*}
Gronwall's inequality yields the desired (\ref{et;2}).
\end{proof}
\noindent\textbf{Step 4. Uniqueness for the endpoint case.}\\
    Suppose 
    \begin{align*}
        (u,v)\in \left( C ([0,T];{B^{1}_{\infty,\infty}})\cap C^1([0,T];{B^{0}_{\infty,\infty}}) \right)^2,
    \end{align*}
    are two solutions of (\ref{eq;scalar}) with initial data $u_0,v_0\in{B^{1}_{\infty,\infty}}$. Denote $w:=u-v, w_0 := u_0-v_0.$ 
    Then $w$ satisfies the inequality
    \begin{align}
    \|w\|_{B^{0}_{\infty,\infty}} 
    &\leq \|w_0\|_{B^{0}_{\infty,\infty}} \mathrm{e}^{C\int^t_0 \| \alpha(u)\|_{B^{1}_{\infty,1}} dt'} \nonumber \\
    &\quad + C\int^t_0 \mathrm{e}^{-C\int^t_{t'} \|\alpha(u)\|_{B^{1}_{\infty,1}} dt''}
    \Bigl( \|(\alpha(u)-\alpha(v))\cdot\nabla v\|_{B^0_{\infty,\infty}} + \|w\|_{B^0_{\infty,\infty}} \Bigr) dt'. \label{ineq;uq}
\end{align}
    Combining Lemma\ \ref{lem;1}, Lemma\ \ref{lem;3} and Lemma\ \ref{lem;7}, we obtain the estimate
    \begin{align*}
        \|\left(\alpha(u)-\alpha(v)\right)\cdot\nabla v\|_{B^0_{\infty,\infty}} &\leq C\|\nabla v\|_{B^0_{\infty,1}}\|\left(\alpha(u)-\alpha(v)\right)\|_{B^0_{\infty,1}}\\
        & \leq C\|\nabla v\|_{B^0_{\infty,1}}(\|u\|_{B^0_{\infty,1}}+\|v\|_{B^0_{\infty,1}})\| w\|_{B^0_{\infty,\infty}}\left( 1+\log\frac{\| w\|_{B^1_{\infty,\infty}}}{\| w\|_{B^0_{\infty,\infty}}} \right).
    \end{align*}
    Substituting the estimate into (\ref{ineq;uq}), we have
    \begin{align*}
        \|w\|_{B^0_{\infty,\infty}} &\leq\|w_0\|_{B^0_{\infty,\infty}}\mathrm{e}^{C\int^t_0\| \alpha(u)\|_{B^{1}_{\infty,1}}dt'}\\
        &+C\int^t_0\mathrm{e}^{-C\int^t_{t'}\|\alpha(u)\|_{B^{1}_{\infty,1}}dt''}\|w\|_{B^0_{\infty,\infty}}\log\left(\mathrm{e}+\frac{\| w\|_{B^1_{\infty,\infty}}}{\| w\|_{B^0_{\infty,\infty}}} \right)dt'.
    \end{align*}
    Setting $u_0 = v_0$, we obtain
    \begin{align*}
        \|w\|_{B^0_{\infty,\infty}} \leq &C\int^t_0\|w\|_{B^0_{\infty,\infty}}\log\left(e+\frac{\| w\|_{B^1_{\infty,\infty}}}{\| w\|_{B^0_{\infty,\infty}}} \right)dt'\\
        \leq&C\int^t_0\|w\|_{B^0_{\infty,\infty}}\left(1-\log(\|w\|_{B^0_{\infty,\infty}})\right)\log(e+\|w\|_{B^1_{\infty,\infty}})dt'.
    \end{align*}
    Obviously, $x(1-\log x)$ is an Osgood modulus of continuity, Lemma\ \ref{lem;8} implies $w = 0 $ almost everywhere. As $B^1_{\infty,1}\hookrightarrow B^1_{\infty,\infty}$, we thus obtain the uniqueness in $B^1_{\infty,1}$.\\
\textbf{Step 5. The continuous dependence for the endpoint case.}\\
We set $\mathbf{S_t}(u_0) = $ the solution map of the radiating gas model (\ref{eq;scalar}) with initial data $u_0$. Let $u = \mathbf{S_t}(u_0)$ and $v =\mathbf{S_t}(S_Nu_0)$, using Lemma (\ref{lem;5}) and (\ref{et;1}), we have
\begin{align*}
    \|\mathbf{S_t}(S_Nu_0)\|_{B^2_{\infty,1}}\leq C\|S_Nu_0\|_{B^2_{\infty,1}}\leq C2^N\|u_0\|_{B^1_{\infty,1}}
\end{align*}
and
\begin{align*}
    \|\mathbf{S_t}(S_N u_0)-\mathbf{S_t}(u_0)\|_{L^\infty}
&\leq C\|S_N u_0 - u_0\|_{L^\infty}\\
&\leq C\|S_N u_0 - u_0\|_{B_{\infty,1}^0}\\
&\leq C 2^{-N}\|S_N u_0 - u_0\|_{B_{\infty,1}^1},
\end{align*}
which combining with (\ref{et;2}) imply
\begin{align*}
    \|\mathbf{S}_t(S_Nu_0)-\mathbf{S}_t(u_0)\|_{B_{\infty,1}^1}\leq C \|S_Nu_0-u_0\|_{B_{\infty,1}^1}.
\end{align*}
Using the triangle inequality, for $u_0$, $\widetilde u_0 \in B^1_{\infty,1}$, we have
\begin{align*}
    \|\mathbf{S_t}(u_0)-\mathbf{S_t}(\widetilde{u}_0)\|_{B_{\infty,1}^1}
\leq& \|\mathbf{S_t}(S_N u_0)-\mathbf{S_t}(u_0)\|_{B_{\infty,1}^1}\\ &+\|\mathbf{S_t}(S_N \widetilde{u}_0)-\mathbf{S_t}(\widetilde{u}_0)\|_{B_{\infty,1}^1}+\|\mathbf{S_t}(S_N u_0)-\mathbf{S_t}(S_N\widetilde{u}_0)\|_{B_{\infty,1}^1}\\
 \leq C&
  \underbrace{\|S_Nu_0-u_0\|_{B_{\infty,1}^1}}_{I} + \underbrace{\|S_N\widetilde{u}_0-\widetilde{u}_0\|_{B_{\infty,1}^1}}_{II}+\underbrace{\|\mathbf{S_t}(S_N u_0)-\mathbf{S_t}(S_N\widetilde{u}_0)\|_{B_{\infty,1}^1}}_{III}
\end{align*}
Using (\ref{et;1}) and the interpolation inequality in Lemma \ref{lem;11}, we get
\begin{align*}
    III &\leq C \bigl\|\mathbf{S}_t(S_N u_0)-\mathbf{S}_t(S_N \widetilde{u}_0)\bigr\|_{B_{\infty,\infty}^0}^{\frac12} \bigl\|\mathbf{S}_t(S_N u_0)-\mathbf{S}_t(S_N \widetilde{u}_0)\bigr\|_{B_{\infty,\infty}^2}^{\frac12}\\
&\leq C \bigl\|\mathbf{S}_t(S_N u_0)-\mathbf{S}_t(S_N \widetilde{u}_0)\bigr\|_{L^\infty}^{\frac12} \bigl\|\mathbf{S}_t(S_N u_0)-\mathbf{S}_t(S_N \widetilde{u}_0)\bigr\|_{B_{\infty,1}^2}^{\frac12}\\
&\leq C \bigl\|S_N u_0 - S_N \widetilde{u}_0\bigr\|_{L^\infty}^{\frac12} \bigl\|\mathbf{S}_t(S_N u_0)-\mathbf{S}_t(S_N \widetilde{u}_0)\bigr\|_{B_{\infty,1}^2}^{\frac12}\\
&\leq C 2^{\frac{N}{2}} \|u_0 - \widetilde{u}_0\|_{B_{\infty,1}^1}^{\frac12},
\end{align*}
which implies that
\begin{align*}
    \|\mathbf{S}_t(u_0)-\mathbf{S}_t(\widetilde{u}_0)\|_{B_{\infty,1}^1}
&\leq C \|S_N u_0 - u_0\|_{B_{\infty,1}^1}
+\|S_N \widetilde{u}_0 - \widetilde{u}_0\|_{B_{\infty,1}^1}
+2^{\frac{N}{2}}\|u_0-\widetilde{u}_0\|_{B_{\infty,1}^1}^{\frac12}.
\end{align*}
For sufficiently large $N$, we may conclude that the data-to-solution map $u_0 \rightarrow S_t(u_0)$ is continuously dependent on the initial data.
\end{proof}

\section{Nonuniform continuous dependence}\label{Nonuniform continuous dependence}
We first introduce a smooth, radial cut-off function in frequency space. Let $\hat{\phi}\in C^{\infty}_0$ be a real-valued and non-negative function satisfying
   \begin{align*}
      \widehat{\phi}(\xi)=
\begin{cases}
1, & |\xi|\leq \dfrac14,\\
0, & |\xi|\geq \dfrac12.
\end{cases}
   \end{align*}
We define the high frequency function $f_n$ and the low frequency function $g_n$ as follows:
\begin{align*}
    f_n(x) &:= 2^{-ns}\phi(x)\sin\left(\frac{17}{12}2^nx_1\right),\\
    g_n (x)&:= \frac{12}{17}2^{-n}\phi(x).
\end{align*}
\begin{lemma}\label{lem;4.1}
    Let $s\in \mathbb{R}$ and $1\leq p\leq \infty$, $1\leq r\leq\infty$. Then for any $\theta \in \mathbb{R}$, we have
    \begin{align*}
        &\|f_n\|_{L^p}\leq C2^{-n},\ \|\nabla f_n\|_{L^p}\leq C2^{-ns+n},\ \|f_n\|_{B^\theta_{p,r}}\leq C2^{n(\theta-s)},\\
        &\|g_n\|_{L^p}\leq C2^{-n},\ \|\nabla g_n\|_{L^p}\leq C2^{-n},\ \|g_n\|_{B^s_{p,r}}\leq C2^{-n-s},
    \end{align*}
    and there is a constant $M>0$ such that
    \begin{align*}
        \underset{n \rightarrow\infty}{\lim}\inf\|g_n \cdot\nabla f_n\|_{B^s_{p,\infty}} \geq M.
    \end{align*}
\end{lemma}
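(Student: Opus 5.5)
The plan is to use the fact that $f_n$ and $g_n\nabla f_n$ have compactly supported Fourier transforms. For large $n$ this lets me compute their Littlewood--Paley pieces exactly, instead of estimating them through multiplier bounds.

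\textbf{Step 1: frequency supports and $\Delta_j f_n$.} Since $\sin(\lambda x_1)=\frac{1}{2i}(\mathrm{e}^{i\lambda x_1}-\mathrm{e}^{-i\lambda x_1})$, with $\lambda=\frac{17}{12}2^n$, the transform $\widehat{f_n}$ is supported in the two balls $B(\pm\lambda e_1,\frac12)$. In the standard dyadic partition one may take $\varphi\equiv1$ on $\{\frac43\le|\xi|\le\frac32\}$; this is where the choice $\frac{17}{12}\in(\frac{16}{12},\frac{18}{12})$ is used. Hence for $n$ large, $\operatorname{Supp}\widehat{f_n}\subset 2^n\{\frac43\le|\xi|\le\frac32\}$, which gives $\Delta_n f_n=f_n$ and $\Delta_j f_n=0$ for $j\neq n$. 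Therefore
\begin{align*}
\|f_n\|_{B^\theta_{p,r}}=2^{n\theta}\|f_n\|_{L^p}\le C2^{n(\theta-s)} \quad\text{for every }\theta.
\end{align*}
The remaining bounds follow directly:
\begin{itemize}
\item $\|f_n\|_{L^p}\le 2^{-ns}\|\phi\|_{L^p}\le C2^{-n}$, using $s>1$;
\item $\|\nabla f_n\|_{L^p}\le C2^{-ns}(1+2^n)$, by the product rule;
\item the bounds on $g_n=\frac{12}{17}2^{-n}\phi$ hold trivially, since $\phi\in\mathcal S$ and $\|\phi\|_{B^s_{p,r}}<\infty$, with all constants depending on $s$ absorbed into $C$.
\end{itemize}
For small $n$ all of these are bounded by constants, so they can be absorbed into $C$ as well.

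\textbf{Step 2: expanding the product.} I interpret $g_n\cdot\nabla f_n$ as $g_n\partial_{x_1}f_n$; the other components are lower order in the same way. Expanding,
\begin{align*}
g_n\partial_{x_1}f_n = 2^{-ns}\phi^2\cos(\lambda x_1)+\tfrac{12}{17}2^{-ns-n}\phi\,\partial_{x_1}\phi\,\sin(\lambda x_1)=:A_n+B_n.
\end{align*}
Here $\widehat{\phi^2}=\widehat\phi*\widehat\phi$ is supported in $B(0,1)$. Hence $\widehat{A_n}$ and $\widehat{B_n}$ lie in $B(\pm\lambda e_1,1)$, which for $n$ large is again inside the region where $\varphi(2^{-n}\cdot)\equiv1$. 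So $\Delta_n(A_n+B_n)=A_n+B_n$, and
\begin{align*}
\|g_n\partial_{x_1}f_n\|_{B^s_{p,\infty}}\ge 2^{ns}\|A_n+B_n\|_{L^p}\ge \|\phi^2\cos(\lambda x_1)\|_{L^p}-C2^{-n}.
\end{align*}

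\textbf{Step 3: uniform lower bound, which is the main obstacle.} It remains to show $\liminf_{\lambda\to\infty}\|\phi^2\cos(\lambda x_1)\|_{L^p}>0$. Since $\widehat\phi$ is real and radial, $\phi$ is real and even, and $\phi(0)=\int\widehat\phi>0$.
\begin{itemize}
\item For $p=\infty$: choose $x=(2\pi k/\lambda,0,\dots,0)\to0$ with $\cos(\lambda x_1)=1$. This gives a liminf of at least $\phi(0)^2$.
\item For $p<\infty$: $|\cos(\lambda x_1)|^p$ converges weakly-$*$ in $L^\infty$ to its mean $c_p=\frac1{2\pi}\int_0^{2\pi}|\cos t|^p\,dt>0$ (a Riemann--Lebesgue type averaging). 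Testing against $|\phi|^{2p}\in L^1$ then gives $\|\phi^2\cos(\lambda x_1)\|_{L^p}^p\to c_p\|\phi\|_{L^{2p}}^{2p}>0$.
\end{itemize}
In either case $M$ can be taken to be half of this limit.
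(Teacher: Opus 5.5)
The paper states Lemma~\ref{lem;4.1} without any proof, so there is no argument of the authors' to compare with. Your proof is correct and is exactly the argument this construction is built for. The frequency $\frac{17}{12}2^n$ lies in $2^n(\frac43,\frac32)$, where $\varphi(2^{-n}\cdot)\equiv1$. The amplitude $\frac{12}{17}2^{-n}$ makes the leading part of $g_n\partial_{x_1}f_n$ exactly $2^{-ns}\phi^2\cos(\frac{17}{12}2^nx_1)$.

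I have three small remarks.

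\textbf{The partition property is automatic.} You do not need to choose $\varphi\equiv1$ on $\{\frac43\le|\xi|\le\frac32\}$; it follows from the support properties of $\chi$ and $\varphi$ listed in Section~\ref{Preliminaries}. On that set $\chi$ and every $\varphi(2^{-j}\cdot)$ with $j\ge1$ vanish, so the partition identity forces $\varphi=1$ there. This matters because the Besov norms are taken with a fixed partition.

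\textbf{The $L^p$ bound needs $s\ge1$.} The bound $\|f_n\|_{L^p}\le C2^{-n}$ genuinely requires a restriction on $s$. The argument of your Step 3 also gives $\|f_n\|_{L^p}\ge c\,2^{-ns}$ for large $n$, so the claim is false for $s<1$. The lemma's hypothesis ``$s\in\mathbb{R}$'' must therefore be read as $s\ge1$. State it as $s\ge1$ rather than $s>1$, so that the endpoint $s=1$ (the space $B^1_{\infty,1}$) allowed by \eqref{cond;un} is included. The bound is immediate there, since $2^{-ns}=2^{-n}$.

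\textbf{A simplification.} For $p=\infty$ in Step 3 you can simply evaluate at $x=0$.
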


\begin{corollary}\label{cor;4.1}
    
    Let $(s,p)$ meet the condition (\ref{cond;un}) in Theorem \ref{thm;un}. There exists a constant $\kappa>0$ such that 
    \begin{align}
        \underset{n\rightarrow \infty}{\lim}\inf\| \alpha(f_n+g_n)\cdot \nabla(f_n+g_n)-\alpha(f_n)\cdot \nabla f_n \|_{B^s_{p,\infty}}\geq \kappa.
    \end{align}
    
\end{corollary}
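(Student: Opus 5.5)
We split the difference into one principal term and a remainder:
\begin{align*}
\alpha(f_n+g_n)\cdot\nabla(f_n+g_n)-\alpha(f_n)\cdot\nabla f_n
=\big(\alpha(f_n+g_n)-\alpha(f_n)\big)\cdot\nabla f_n+\alpha(f_n+g_n)\cdot\nabla g_n .
\end{align*}
Taylor expansion around $0$ (using $\alpha(0)=0$) gives
\begin{align*}
\alpha(f_n+g_n)-\alpha(f_n)=\alpha'(0)g_n+\big[\alpha(f_n+g_n)-\alpha(f_n)-\alpha'(0)g_n\big].
\end{align*}
Since $\alpha\not\equiv0$, some derivative of $\alpha$ is nonzero at $0$. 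We first treat the generic case $\alpha'(0)\neq0$; after a rotation in $x$ we may assume the first component of $\alpha'(0)$ is nonzero. If instead $\alpha'(0)=0$, we replace $g_n$ by a suitable power-adjusted low-frequency profile, or shift the expansion to a point $u^*$ where $\alpha'(u^*)\neq0$. The principal term is then $\alpha'(0)g_n\,\partial_{x_1}f_n$, which is a nonzero multiple of $g_n\cdot\nabla f_n$ in the sense of Lemma \ref{lem;4.1}. By that lemma, its $B^s_{p,\infty}$ norm has $\liminf\geq M|\alpha_1'(0)|>0$. This gives the lower bound $\kappa$, provided everything else tends to $0$.

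The remainders are estimated with the product law Lemma \ref{lem;1}(1), the composition estimate Lemma \ref{lem;3}, and the bounds of Lemma \ref{lem;4.1}. Note that $\|f_n\|_{L^\infty}\lesssim 2^{-ns}$ and $\|g_n\|_{L^\infty}\lesssim 2^{-n}$ both tend to $0$. The key counting is that $\|\nabla f_n\|_{B^s_{p,\infty}}\lesssim 2^{n}$, while $\|\nabla f_n\|_{L^\infty}\lesssim 2^{n(1-s)}$.

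\textbf{Term $\alpha(f_n+g_n)\cdot\nabla g_n$.} Its $B^s_{p,\infty}$ norm is at most
\begin{align*}
C\big(\|\alpha(f_n+g_n)\|_{L^\infty}\|\nabla g_n\|_{B^s}+\|\alpha(f_n+g_n)\|_{B^s}\|\nabla g_n\|_{L^\infty}\big)\lesssim (2^{-ns}+2^{-n})2^{-n}+(1+2^{-n})2^{-n}\to0.
\end{align*}

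\textbf{Term with the Taylor bracket.} Write the bracket as $g_n\int_0^1[\alpha'(f_n+\tau g_n)-\alpha'(0)]\,d\tau=:g_nR_n$. Its $L^\infty$ norm is $O(2^{-n}(2^{-ns}+2^{-n}))$. Its $B^s$ norm is $O(2^{-n}\cdot\text{const})$, since $\|R_n\|_{B^s}\lesssim\|f_n\|_{B^s}+\|g_n\|_{B^s}=O(1)$. Combining these with $\nabla f_n$ in the product law gives
\begin{align*}
\|g_nR_n\|_{L^\infty}\|\nabla f_n\|_{B^s}+\|g_nR_n\|_{B^s}\|\nabla f_n\|_{L^\infty}\lesssim 2^{-n}(2^{-ns}+2^{-n})2^n+2^{-n}2^{n(1-s)}\to0 .
\end{align*}
The main obstacle is the first product $\|g_nR_n\|_{L^\infty}\|\nabla f_n\|_{B^s}$. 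The factor $2^n$ must be exactly absorbed by $2^{-n}$ from $g_n$, and one extra small factor must still survive. That extra factor comes from $R_n=O(\|f_n\|_{L^\infty}+\|g_n\|_{L^\infty})$, which tends to $0$ because $s>0$.

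The $B^s_{p,r}$ norms used above are the relevant ones because, for the indices in \eqref{cond;un}, $s>d/p$ (or $s=d/p+1$). In the endpoint case $r=1$ we bound by $B^s_{p,1}$ norms and use $B^s_{p,1}\hookrightarrow B^s_{p,\infty}$. Since each of the two remainders tends to $0$, the triangle inequality finishes the proof. The bounds $\|f_n\|_{B^s_{p,1}}$ are still $O(1)$ because $\hat f_n$ is supported in an annulus of size $2^n$, so only a bounded number of dyadic blocks are involved.
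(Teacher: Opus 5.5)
In the case $f_1''(0)=\alpha_1'(0)\neq0$ your argument is correct, and it uses the same splitting as the paper. The difference is that the paper simply asserts $\|(\alpha(f_n+g_n)-\alpha(f_n))\cdot\nabla f_n\|_{B^s_{p,\infty}}\ge C\|g_n\cdot\nabla f_n\|_{B^s_{p,\infty}}$. Your Taylor remainder $g_nR_n$, with the bound $\|g_nR_n\|_{L^\infty}\|\nabla f_n\|_{B^s_{p,r}}+\|g_nR_n\|_{B^s_{p,r}}\|\nabla f_n\|_{L^\infty}\lesssim 2^{-n}+2^{-ns}$, actually proves that inequality. One small correction: the principal term is $\sum_i\alpha_i'(0)g_n\partial_{x_i}f_n$, not only the $i=1$ term. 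For $i\ge2$, $\partial_{x_i}f_n=2^{-ns}\partial_{x_i}\phi\,\sin(\frac{17}{12}2^nx_1)$ carries no factor $2^n$, so these terms are $O(2^{-n})$ in $B^s_{p,r}$.

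\textbf{The gap.} The gap is your reduction to that case, and it cannot be repaired: the corollary fixes $f_n,g_n$, with $f_n$ oscillating along $e_1$.

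A rotation is not without loss of generality here. Since $\phi$ and $g_n$ are radial and the Besov norms are rotation invariant, rotating $x$ by $Q$ replaces $\alpha$ by $Q^{T}\alpha$ \emph{and} the oscillation direction $e_1$ by $Q^{T}e_1$. The relevant quantity $\alpha'(0)\cdot e_1$ is therefore unchanged.

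When $\alpha'(0)=0$, you may neither alter $g_n$ nor expand around some $u^*\neq0$. Both $f_n$ and $f_n+g_n$ take values in $[-C2^{-n},C2^{-n}]$, since $s\ge1$.

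In fact your own remainder bounds show the claim is false whenever $\alpha_1'(0)=0$: every term is then $O(2^{-n}+2^{-ns})$ in $B^s_{p,r}$. Two concrete cases:
\begin{itemize}
\item for $\alpha(u)=(u^2,0,\dots,0)$ the expression equals $(2f_ng_n+g_n^2)\partial_{x_1}f_n+(f_n+g_n)^2\partial_{x_1}g_n$;
\item for $\alpha(u)=(0,u,0,\dots,0)$ it equals $g_n\partial_{x_2}f_n+(f_n+g_n)\partial_{x_2}g_n$.
\end{itemize}
Both tend to $0$ in $B^s_{p,\infty}$.

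So you have correctly proved the corollary under the extra hypothesis $f_1''(0)\neq0$. That is exactly what the paper's unjustified inequality tacitly requires. Under $\alpha\not\equiv0$ alone, the lower bound needs $f_n,g_n$ chosen to fit $\alpha$. Your rotation and modified-$g_n$ ideas change the construction itself, so they belong to the proof of Theorem~\ref{thm;un}, not to this corollary.
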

\begin{proof}
Combining the triangle inequality, Lemma \ref{lem;4.1} and the embedding $B^s_{p,r}\hookrightarrow B^s_{p,\infty}$, we obtain
    \begin{align*}
\big\| \alpha(f_n+g_n)\cdot \nabla(f_n+g_n) -& \alpha(f_n)\cdot \nabla f_n \big\|_{B_{p,\infty}^s}
\\
&\ge \big\| \big(\alpha(f_n+g_n)-\alpha(f_n)\big)\cdot \nabla f_n + \alpha(f_n+g_n)\cdot \nabla g_n \big\|_{B_{p,\infty}^s} \\
&\geq C \big\| \big(\alpha(f_n+g_n)-\alpha(f_n)\big)\cdot \nabla f_n \big\|_{B_{p,\infty}^s} - \big\| \alpha(f_n+g_n)\cdot \nabla g_n \big\|_{B_{p,r}^s} \\
&\geq C \|g_n\cdot \nabla f_n\|_{B_{p,\infty}^s} - \|\alpha(f_n+g_n)\|_{B_{p,r}^s} \|\nabla g_n\|_{B_{p,r}^s} \\
&\geq C M - 2^{-n-s-1}.
\end{align*}
Taking $ n\to\infty$, there exists $\kappa$ such that
\begin{align*}
\lim_{n\to\infty}\inf\big\| \alpha(f_n+g_n)\cdot \nabla(f_n+g_n) - \alpha(f_n)\cdot \nabla f_n \big\|_{B_{p,\infty}^s} \ge \kappa.
\end{align*} 
\end{proof}
\begin{proposition}\label{prop;4.1}
    Let $(s,p,r)$ meet the condition (\ref{cond;un}) in Theorem \ref{thm;un} and $\mathbf{S}_t(u_0)$ be the solution to (\ref{eq;scalar}) with initial data $u_0$. Define $V(u_0):=H[u_0]-\alpha(u_0)\cdot\nabla u_0$. There exist $T_u>0$ and $C>0$, independent of $n$, such that for every $u_0\in \{ f_n,f_n+g_n \}$ and every $t\in[0,T_u]$,
    \begin{align*}
        \| \mathbf{S}_t(u_0)-u_0-tV(u_0)\|_{B^s_{p,r}}\leq Ct^2.
    \end{align*}
\end{proposition}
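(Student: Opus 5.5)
Set $u(t)=\mathbf{S}_t(u_0)$ and write the remainder as
\begin{align*}
w(t):=u(t)-u_0-tV(u_0),\qquad V(u)=H[u]-\alpha(u)\cdot\nabla u .
\end{align*}
The plan is the standard one for non-uniform dependence: bound the remainder in a space one derivative rougher, bound it in a space one derivative smoother, and interpolate.

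\textbf{Step 1: uniform bounds on the data and the solution.}
By Lemma \ref{lem;4.1}, for both $u_0\in\{f_n,f_n+g_n\}$ we have
\begin{align*}
\|u_0\|_{B^{s}_{p,r}}\le C,\qquad \|u_0\|_{B^{s+\sigma}_{p,r}}\le C2^{n\sigma},\qquad \|u_0\|_{B^{s-1}_{p,r}}\le C2^{-n}
\end{align*}
(the last for the $f_n$ part; $g_n$ contributes $O(2^{-n})$ in every norm). Theorem \ref{thm;1} together with the estimate (\ref{et;ini}) then gives a time $T_u$, independent of $n$, on which $\|u(t)\|_{B^s_{p,r}}\le C$. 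Applying Lemma \ref{lem;5} in $B^{s+1}_{p,r}$ with $V'=\|\nabla u\|_{L^\infty}$ (use its last inequality with $\theta=s+1$) propagates $\|u(t)\|_{B^{s+1}_{p,r}}\le C2^{n}$. Similarly the $B^{s-1}_{p,r}$ norm of $u$ stays $\le C2^{-n}$, since $\|\nabla u\|_{L^\infty}$ is bounded.

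\textbf{Step 2: the equation for $w$ and two estimates.}
The remainder satisfies
\begin{align*}
w(t)=\int_0^t\bigl(V(u(\tau))-V(u_0)\bigr)d\tau .
\end{align*}
Using $u(\tau)-u_0=\int_0^\tau\partial_t u$, one gets
\begin{align*}
\|u(\tau)-u_0\|_{B^{\theta}_{p,r}}\le C\tau\sup_{t'}\|V(u(t'))\|_{B^\theta_{p,r}} .
\end{align*}
\emph{Low norm.} Take $\theta=s-1$, or $s-2$ if $s-1$ is too low. Estimate $V(u)-V(u_0)$ by splitting
\begin{align*}
\alpha(u)\nabla u-\alpha(u_0)\nabla u_0=(\alpha(u)-\alpha(u_0))\nabla u+\alpha(u_0)\nabla(u-u_0).
\end{align*}
Lemma \ref{lem;1}(2) and Lemma \ref{lem;3} then give
\begin{align*}
\|w(t)\|_{B^{s-1}_{p,r}}\le Ct^2\,\|V\|_{B^{s-1}}\lesssim t^2\cdot O(1),
\end{align*}
and, being careful with powers, something like $\|w\|_{B^{s-1}}\le Ct^2 2^{-n}$.

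\emph{High norm.} In the same way,
\begin{align*}
\|w(t)\|_{B^{s+1}_{p,r}}\le Ct^2\,2^{n} .
\end{align*}
Here one uses that $\|V(u)\|_{B^{s+1}}$ costs one extra derivative, i.e.\ one extra factor $2^n$, relative to $B^s$, and that $\|V(u)\|_{B^{s}}\le C$ by the tame product estimates (Lemma \ref{lem;1}(1)).

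\textbf{Step 3: interpolation.}
Combine the two bounds:
\begin{align*}
\|w\|_{B^s_{p,r}}\le \|w\|_{B^{s-1}_{p,r}}^{1/2}\|w\|_{B^{s+1}_{p,r}}^{1/2}\le Ct^2 .
\end{align*}
This is where the powers must balance: $2^{-n/2}\cdot 2^{n/2}=1$.

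\textbf{Main obstacle.}
The hard part is getting the low-norm bound with the decaying factor $2^{-n}$. For $f_n+g_n$, the product $g_n\nabla f_n$ is only $O(1)$ in $B^{s}$ and $O(2^{-n})$ in $B^{s-1}$. The key is to estimate $\|u-u_0\|_{B^{s-1}}\lesssim t\,\|V\|_{B^{s-1}}\lesssim t2^{-n}$ using the product rule with the low-regularity factor, placing the $B^{s-1}$ norm on the oscillating piece and the $L^\infty$ or $B^{s}$ norm on the other.

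In the endpoint case $s=1+d/p$, $r=1$, the low space $B^{s-1}_{p,1}=B^{d/p}_{p,1}$ is still an algebra. For $p=\infty$ one must use $B^0_{\infty,1}$ products with care: I would first try $s-1$ and fall back to $B^{s-2}$ with $B^{s+2}$ interpolation only if the product law of Lemma \ref{lem;1}(2) fails there.
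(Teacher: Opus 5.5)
\textbf{Comparison with the paper.} Your route differs from the paper's. The paper writes a transport equation for $E=\mathbf{S}_t(u_0)-u_0-tV(u_0)$ and first obtains only $\|E\|_{B^{s-1}_{p,r}}\le Ct2^{-n}$. It then estimates $\|E\|_{B^{s}_{p,r}}$ directly by the transport estimate; the dangerous term $\|E+tV(u_0)\|_{L^\infty}\|\nabla u_0\|_{B^s_{p,r}}$ is $O(t2^{-n}\cdot 2^{n})$, so the paper needs neither a $B^{s+1}$ bound on the remainder nor interpolation. Your $B^{s-1}$/$B^{s+1}$ interpolation is a valid alternative whenever $B^{s-1}_{p,r}$ is an algebra, i.e.\ for $s>1+\frac dp$, and for $s=1+\frac dp$, $r=1$, $p<\infty$. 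There $\|u-u_0\|_{B^{s-1}_{p,r}}\lesssim t2^{-n}$, $\|u-u_0\|_{B^{s}_{p,r}}\lesssim t$ and $\|\alpha(u_0)\|_{B^{s-1}_{p,r}}\lesssim 2^{-n}$ do give $\|V(u)-V(u_0)\|_{B^{s-1}_{p,r}}\lesssim t2^{-n}$, hence $\|w\|_{B^{s-1}_{p,r}}\lesssim t^22^{-n}$. However, your high-norm bound via $w=\int_0^t(V(u)-V(u_0))\,d\tau$ needs more than Step 1 gives. The term $\alpha(u_0)\cdot\nabla(u-u_0)$ in $B^{s+1}_{p,r}$ requires $\|u-u_0\|_{B^{s+2}_{p,r}}\lesssim t2^{2n}$, i.e.\ $\|u(t)\|_{B^{s+3}_{p,r}}\lesssim 2^{3n}$, together with $\|u(t)\|_{L^\infty}\lesssim 2^{-n}$ (Lemma~\ref{lem;9}). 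Step 1 only propagates $B^{s+1}_{p,r}$. The extra bounds follow routinely from the last inequality of Lemma~\ref{lem;5}, but they must be added.

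\textbf{The gap: the endpoint $(s,p,r)=(1,\infty,1)$.} This case is included in condition (\ref{cond;un}). Your low space is then $B^0_{\infty,1}$, which is not an algebra: Lemma~\ref{lem;1}(2) needs $s_1+s_2>0$. Products there can only be bounded with a loss, namely a factor $n$ from the resonant part, or $2^{\varepsilon n}$ if you shift regularity. Moreover, Lemma~\ref{lem;5} does not propagate $\|u(t)\|_{B^0_{\infty,1}}$, since for $p=\infty$ it requires $\theta>0$. Any such loss destroys the exact balance $2^{-n/2}\cdot 2^{n/2}=1$.

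Your fallback of interpolating between $B^{s-2}$ and $B^{s+2}$ cannot work either. Already for $\alpha\equiv0$ one has $w=(e^{tH}-I-tH)(f_n+g_n)$. Its $B^{s-2}$ norm is of size $t^22^{-n}$, because the low-frequency bump $g_n$ is $O(2^{-n})$ in every norm. Its $B^{s+2}$ norm is of size $t^22^{2n}$, coming from $f_n$. So this interpolation yields only $t^22^{n/2}$.

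What is missing is to take $L^\infty$ as the low norm. Products are trivial there, and Lemma~\ref{lem;9} gives $\|u(t)\|_{L^\infty}\le\|u_0\|_{L^\infty}\lesssim 2^{-n}$. Hence $\|u-u_0\|_{L^\infty}\lesssim t2^{-n}$ and $\|w\|_{L^\infty}\lesssim t^22^{-n}$. The interpolation inequality $\|w\|_{B^1_{\infty,1}}\le C\|w\|^{1/2}_{B^0_{\infty,\infty}}\|w\|^{1/2}_{B^2_{\infty,\infty}}$ of Lemma~\ref{lem;11} then gives $\|w\|_{B^1_{\infty,1}}\lesssim t^2$. Alternatively, as the paper does in Case~2, estimate $\|V(u)-V(u_0)\|_{B^1_{\infty,1}}$ directly with the tame product law, pairing $\|u-u_0\|_{L^\infty}\lesssim t2^{-n}$ with $\|u\|_{B^2_{\infty,1}}\lesssim 2^n$.
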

\begin{proof}
 \textbf{Case 1. }For $s>1+\frac{d}{p}, 1\leq p,r \leq \infty \ \text{or}\ s = 1+ \frac{d}{p}, r = 1 \ \text{with}\ 1\leq p < \infty,$   from Theorem $\ref{thm;1}$, we know that there exists a $T_u = T_u\big(\|u_0\|_{B_{p,r}^s}\big)$ such that (\ref{eq;scalar}) with initial data $u_0$ has a unique solution $\mathbf{S}_t(u_0)\in E_{p,r}^s(T)$ and $T\approx 1$. Moreover, we see from (\ref{et;ini}) that
\begin{align}\label{ineq;cor}
\|\mathbf{S}_t(u_0)\|_{L^\infty([0,T_u];B_{p,r}^s)} \leq C\|u_0\|_{B_{p,r}^s} \leq C. 
\end{align}
Combining Lemma \ref{lem;4.1} and (\ref{ineq;cor}), we obtain
\begin{align}
    \underset{0\leq t \leq T}{\sup}\| \mathbf{S}_t(u_0) \|_{B^{s+k}_{p,r}}\leq C2^{kn} \ (k = 0,\pm1).
\end{align}
Owing to $V(u_0):=H[u_0]-\alpha(u_0)\cdot\nabla u_0$ and the definition of $u_0$, $k = 0,\pm1$, we deduce that
\begin{align*}
    \|V(u_0)\|_{B^{s+k}_{p,r}}\leq C \|u_0\|_{B^{s+k}_{p,r}}+\|\nabla u_0\|_{L^\infty}\|u_0\|_{B^{s+k}_{p,r}}+\|u_0\|_{B^{s+k+1}_{p,r}}\|u_0\|_{B^{s+k-1}_{p,r}}\leq C 2^{kn}.
\end{align*}
Set $E = \mathbf{S}_t(u_0)-u_0-tV(u_0) $. We know that $E$ satisfies
\begin{align}\label{eq;E}
    \begin{cases}
\begin{aligned}
\partial_tE+\alpha(\mathbf{S}_t(u_0))\cdot \nabla E
&=\left( \alpha(u_0)-\alpha(\mathbf{S}_t(u_0))\right)\cdot\nabla u_0 \\
&\quad -t\alpha(\mathbf{S}_t(u_0))\cdot\nabla V(u_0)+H[E+tV(u_0)],
\end{aligned}\\
E(0,x) = 0. 
\end{cases}
\end{align}
For $k = -1$, applying Lemma \ref{lem;2}, Lemma \ref{lem;3} and Lemma \ref{lem;9} to (\ref{eq;E}), we see for all $t\in[0,T]$
\begin{align}\label{ineq-E(s-1)}
        \|E\|_{B^{s-1}_{p,r}}\leq C &\int^t_0 \|E\|_{B^{s-1}_{p,r}}+t\|V(u_0)\|_{B^{s-1}_{p,r}}+\|E+tV(u_0)\|_{L^\infty}\|u_0\|_{B^{s}_{p,r}}+\|E+tV(u_0)\|_{B^{s-1}_{p,r}}\|\nabla u_0\|_{L^\infty} \notag\\
    &+t\|\alpha(\mathbf{S}_t(u_0))\cdot\nabla V(u_0)\|_{B^{s-1}_{p,r}}+\|\nabla \alpha(\mathbf{S}_t(u_0))\|_{B^{s-2}_{p,r}}\|E\|_{B^{s-1}_{p,r}}d\tau.
\end{align}
Since $\mathbf{S}_t(u_0)$ is bounded in $L^\infty([0,T];B^s_{p,r})$, we deduce from Lemma \ref{lem;1} and Lemma \ref{lem;3} that
\begin{align*}
    &\|V(u_0)\|_{B^s_{p,r}}\leq C \|u_0\|_{B^s_{p,r}}+\|u_0\|_{B^{s-1}_{p,r}}\|u_0\|_{B^{s+1}_{p,r}} + \|u_0\|_{B^s_{p,r}}^2, \\
    &\|\alpha(\mathbf{S}_t(u_0))\cdot\nabla V(u_0)\|_{B^{s-1}_{p,r}}\leq C \|u_0\|_{L^\infty}\|\nabla V(u_0)\|_{B^{s-1}_{p,r}}+\|\nabla V(u_0)\|_{L^\infty}\|u_0\|_{B^{s-1}_{p,r}},\\
    &\|V(u_0)\|_{L^\infty}\|u_0\|_{B^{s}_{p,r}}\leq C\|\alpha(\mathbf{S}_t(u_0))\cdot\nabla u_0\|_{B^{s-1}_{p,r}}\|u_0\|_{B^{s}_{p,r}}+\|u_0\|_{B^{s-1}_{p,r}}\|u_0\|_{B^{s}_{p,r}}\leq C\|u_0\|_{B^{s-1}_{p,r}}\|u_0\|_{B^{s}_{p,r}},\\
    &\|V(u_0)\|_{B^{s-1}_{p,r}}\|\nabla u_0\|_{L^\infty}\leq C\|\alpha(\mathbf{S}_t(u_0))\cdot\nabla u_0\|_{B^{s-1}_{p,r}}\|u_0\|_{B^{s}_{p,r}}+\|u_0\|_{B^{s-1}_{p,r}}\|u_0\|_{B^{s}_{p,r}}\leq C\|u_0\|_{B^{s-1}_{p,r}}\|u_0\|_{B^{s}_{p,r}}.
\end{align*}
Combining the above inequalities, Lemma \ref{lem;4.1} and (\ref{ineq-E(s-1)}), we find
\begin{align*}
    \|E\|_{B^{s-1}_{p,r}} \leq C  t2^{-n}+\int^t_0\|E\|_{B^{s-1}_{p,r}}d\tau.
\end{align*}
Gronwall's inequality ensures 
\begin{align}\label{et;s-1}
    \|E\|_{B^{s-1}_{p,r}}\leq Ct2^{-n}.
\end{align}
For $k=0$, similar estimates imply
\begin{align}\label{ineq-E(s)}
    \|E\|_{B^s_{p,r}}\leq C &\int^t_0 \|E\|_{B^s_{p,r}}+t\|V(u_0)\|_{B^s_{p,r}}+\|E+tV(u_0)\|_{L^\infty}\|\nabla u_0\|_{B^{s}_{p,r}}+\|E+tV(u_0)\|_{B^s_{p,r}}\|\nabla u_0\|_{L^\infty}\notag \\
    &+t\|\alpha(\mathbf{S}_t(u_0))\cdot\nabla V(u_0)\|_{B^{s}_{p,r}}+\|\nabla \alpha(\mathbf{S}_t(u_0))\|_{B^{s-1}_{p,r}}\|E\|_{B^{s}_{p,r}}d\tau.
\end{align}
Similarly, according to Lemma \ref{lem;1} and Lemma \ref{lem;3}, one has
\begin{align*}
    &\|V(u_0)\|_{B^{s+1}_{p,r}}
\leq C
\|u_0\|_{B^{s+1}_{p,r}}
+\|\alpha(u_0)\|_{L^\infty}
   \|\nabla u_0\|_{B^{s+1}_{p,r}}
+\|\alpha(u_0)\|_{B^{s+1}_{p,r}}
   \|\nabla u_0\|_{L^\infty}
\leq C 2^n\\
&\|\alpha(\mathbf{S}_t(u_0))\cdot\nabla V(u_0)\|_{B^{s}_{p,r}}\leq C \|\alpha(\mathbf{S}_t(u_0))\|_{L^\infty}\|V(u_0)\|_{B^{s+1}_{p,r}}+\|\nabla V(u_0)\|_{L^\infty}\|\alpha(\mathbf{S}_t(u_0))\|_{B^{s}_{p,r}},\\
    &\|V(u_0)\|_{L^\infty}\|\nabla u_0\|_{B^{s}_{p,r}}\leq C\|\alpha(\mathbf{S}_t(u_0))\cdot\nabla u_0\|_{B^{s-1}_{p,r}}\|u_0\|_{B^{s+1}_{p,r}}+\|u_0\|_{B^{s-1}_{p,r}}\|u_0\|_{B^{s+1}_{p,r}}\\
    &\quad\quad\quad\quad\quad\quad\quad\quad\quad\ \leq C\|u_0\|_{B^{s-1}_{p,r}}\|u_0\|_{B^{s}_{p,r}}\|u_0\|_{B^{s+1}_{p,r}}+ \|u_0\|_{B^{s-1}_{p,r}}\|u_0\|_{B^{s+1}_{p,r}},\\
    &\|V(u_0)\|_{B^{s}_{p,r}}\|\nabla u_0\|_{L^\infty}\leq C\|u_0\|_{B^{s-1}_{p,r}}\|u_0\|_{B^{s}_{p,r}}\|u_0\|_{B^{s+1}_{p,r}}+\|u_0\|_{B^{s}_{p,r}}^3.
\end{align*}
Plugging (\ref{et;s-1}) and above inequality into (\ref{ineq-E(s)}) and using Lemma \ref{lem;4.1}, we infer
\begin{align*}
    \|E\|_{B^s_{p,r}}\leq C t^2+\int^t_0\|E\|_{B^{s}_{p,r}}d\tau. 
\end{align*}
Using Gronwall's inequality completes the proof.\\
\textbf{Case 2.} For the endpoint Besov space $B^1_{\infty,1}$, we  assume that $\|u_0\|_{B^1_{\infty,1}}\leq C$.
By the Newton-Leibniz formula, Lemma \ref{lem;1}, Lemma \ref{lem;3}, Lemma \ref{lem;9} and Lemma \ref{lem;4.1}, one has
\begin{align}
    \|\mathbf{S}_t(u_0)-u_0\|_{L^{\infty}}&\leq \int^t_0\|\partial_{\tau}\mathbf{S}_t(u_0)\|_{L^{\infty}}d\tau\notag\\&\leq \int^t_0\|\alpha(\mathbf{S}_t(u_0))\cdot\nabla \mathbf{S}_t(u_0)\|_{L^{\infty}}+\|H[\mathbf{S}_t(u_0)]\|_{L^{\infty}}d\tau\notag\\
    &\leq Ct\|u_0\|_{L^{\infty}}, \label{et-infty}
\end{align}
\begin{align}
    \|u-u_0\|_{B^1_{\infty,1}}&\leq \int^t_0\|\partial_{\tau}\mathbf{S}_t(u_0)\|_{B^1_{\infty,1}}d\tau\notag\\
    &\leq\int^t_0\|\alpha(\mathbf{S}_t(u_0))\cdot\nabla \mathbf{S}_t(u_0)\|_{B^1_{\infty,1}}+\|H[\mathbf{S}_t(u_0)]\|_{B^1_{\infty,1}}d\tau\notag\\
    &\leq Ct\left( \|\mathbf{S}_t(u_0)\|_{L^{\infty}}\|\nabla \mathbf{S}_t(u_0)\|_{B^1_{\infty,1}}+\|\mathbf{S}_t(u_0)\|_{B^1_{\infty,1}}\|\nabla \mathbf{S}_t(u_0)\|_{L^\infty}+\|\mathbf{S}_t(u_0)\|_{B^1_{\infty,1}} \right)\notag\\
    &\leq Ct\left( \|u_0\|_{L^\infty}\|u_0\|_{B^2_{\infty,1}} +\|u_0\|^2_{B^1_{\infty,1}}+\|u_0\|_{B^1_{\infty,1}}\right)\notag\\
    & \leq Ct  \label{et-B1}
\end{align}

and
\begin{align}
    \|u-u_0\|_{B^2_{\infty,1}}&\leq \int^t_0\|\partial_{\tau}\mathbf{S}_t(u_0)\|_{B^2_{\infty,1}}d\tau\notag\\
    &\leq\int^t_0\|\alpha(\mathbf{S}_t(u_0))\cdot\nabla \mathbf{S}_t(u_0)\|_{B^2_{\infty,1}}+\|H[\mathbf{S}_t(u_0)]\|_{B^2_{\infty,1}}d\tau\notag\\
    &\leq Ct\left( \|\mathbf{S}_t(u_0)\|_{L^{\infty}}\|\nabla \mathbf{S}_t(u_0)\|_{B^2_{\infty,1}}+\|\mathbf{S}_t(u_0)\|_{B^2_{\infty,1}}\|\nabla \mathbf{S}_t(u_0)\|_{L^\infty}+\|\mathbf{S}_t(u_0)\|_{B^1_{\infty,1}} \right)\notag\\
    &\leq Ct\left( \|u_0\|_{L^\infty}\|u_0\|_{B^3_{\infty,1}} +\|u_0\|_{B^2_{\infty,1}}\|u_0\|_{B^1_{\infty,1}}+\|u_0\|_{B^2_{\infty,1}}\right)\notag\\
    & \leq C2^nt.\label{et-B2}
\end{align}
By the Newton-Leibniz formula once again, we obtain
\begin{align}
    \|\mathbf{S}_t(u_0)-&u_0-tV(u_0)\|_{B^1_{\infty,1}}\notag\\
    \leq &\int^t_0\|\partial_t\mathbf{S}_t(u_0)-V(u_0)\|_{B^1_{\infty,1}}d\tau\notag\\
    \leq &\int^t_0 \|H[\mathbf{S}_t(u_0)-u_0]+\alpha(u_0)\cdot(\nabla u_0-\nabla \mathbf{S}_t(u_0))+(\alpha(u_0)-\alpha(\mathbf{S}_t(u_0)))\cdot \nabla u\|_{B^1_{\infty,1}}d\tau\notag\\
    \leq &\int^t_0 \|\mathbf{S}_t(u_0)-u_0\|_{B^1_{\infty,1}} +\|u_0\|_{L^\infty}\|\mathbf{S}_t(u_0)-u_0\|_{B^2_{\infty,1}}+\|\mathbf{S}_t(u_0)-u_0\|_{B^1_{\infty,1}}\|u_0\|_{B^1_{\infty,1}}\notag\\
    &+\|\mathbf{S}_t(u_0)-u_0\|_{L^\infty}\|u\|_{B^2_{\infty,1}}+\|\mathbf{S}_t(u_0)-u_0\|_{B^1_{\infty,1}}\|\mathbf{S}_t(u_0)\|_{B^1_{\infty,1}}d\tau.\label{et-V}
 \end{align}
 Applying (\ref{et-infty}), (\ref{et-B1}) and (\ref{et-B2}) to (\ref{et-V}), Lemma \ref{lem;4.1} ensures
 \begin{align*}
     \|\mathbf{S}_t(u_0)-u_0-tV(u_0)\|_{B^1_{\infty,1}}\leq Ct^2.
 \end{align*}
Combining above two cases, we finish the proof of Proposition \ref{prop;4.1}.
\end{proof}
 \begin{proof}[\normalfont\bfseries Proof of Theorem~\ref{thm;un}]
Let $V(u_0)$ be as in (\ref{prop;4.1}) and set
\begin{align*}
    E_1 := \mathbf{S}_t(f_n + g_n) - (f_n + g_n) - tV(f_n + g_n),\quad E_2 := \mathbf{S}_t(f_n) - f_n - tV(f_n).
\end{align*}
Subtracting the two expansions gives
\begin{align}
    \mathbf{S}_t(f_n + g_n)-\mathbf{S}_t(f_n)&=g_n+t(V(f_n+g_n)-V(f_n))+E_1-E_2\notag \\
    &=g_n-t(\alpha(f_n+g_n)\cdot \nabla(f_n+g_n)-\alpha(f_n)\cdot \nabla f_n)+tH[g_n]+E_1-E_2 \label{eq;triangle}
\end{align}
Taking the $B^s_{p,r}$ norm in (\ref{eq;triangle}), combining Lemma \ref{lem;4.1}, Corollary \ref{cor;4.1} and Proposition \ref{prop;4.1}, we get for $0\leq t\leq T$,
\begin{align*}
    \|\mathbf{S}_t(f_n + g_n)-\mathbf{S}_t(f_n)\|_{B^s_{p,r}}&\geq t\| \alpha(f_n+g_n)\cdot \nabla(f_n+g_n)-\alpha(f_n)\cdot \nabla f_n \|_{B^s_{p,\infty}}-C(1+t)2^{-n} -2Ct^2\\
    &\geq\kappa t - C(1+t)2^{-n}-2Ct^2.
\end{align*}
We finally obtain
\begin{align*}
    \lim_{n\to\infty}\inf\|\mathbf{S}_t(f_n + g_n)-\mathbf{S}_t(f_n)\|_{B_{p,r}^s} \geq C t,\quad \text{for } t>0 \text{ small enough}.
\end{align*}
This proves Theorem \ref{thm;un}.
\end{proof}

\section{Blow-up}\label{Blow-up}
We rewrite the equation in (\ref{eq;scalar}) as:
\begin{align*}
    u_t+\nabla\cdot f(u) = -u +(I-\Delta)^{-1}u,
\end{align*}
where $(I-\Delta)^{-1}u = K*u$. The fundamental solution to the operator $I-\Delta$ in $\mathbb{R}^d$ is
\begin{align*}
    K(x) = (4\pi)^{-\frac{d}{2}}\int^{\infty}_0\mathrm{e}^{-s-\frac{|x|^2}{4s}}s^{-\frac{d}{2}}ds.
\end{align*}
Before investigating the conditions for the blow-up of strong solutions to the Cauchy problem (\ref{eq;scalar}), we first establish the precise blow-up scenario for the system in multi-dimensional Besov space.

\subsection{Blow-up scenario}
\begin{proposition}
    \label{thm;2}
    Let $u_0 \in B^s_{p,r}$ as in Theorem \ref{thm;1}, and let $T^*$ be the maximal existence time of the solution $u$ to Cauchy problem (\ref{eq;scalar}), then $u(t,x)$ blows up in finite time $T^*<\infty$ if and only if
\begin{align*}
\int_{0}^{T^*} \|\nabla u\|_{B^0_{\infty,\infty}}dt = +\infty.
\end{align*}
\end{proposition}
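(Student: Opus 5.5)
One direction is immediate: if $T^*<\infty$ but $\int_0^{T^*}\|\nabla u\|_{B^0_{\infty,\infty}}dt<\infty$, we will show that $\|u(t)\|_{B^s_{p,r}}$ stays bounded on $[0,T^*)$. Then the local theory of Theorem~\ref{thm;1} extends the solution beyond $T^*$, because its existence time depends only on $\|u_0\|_{B^s_{p,r}}$. This contradicts maximality. Conversely, if $T^*<\infty$ and the integral were finite, the same bound would apply. So the whole statement reduces to an a priori estimate: $\|u(t)\|_{B^s_{p,r}}$ is controlled by $\exp\bigl(C\int_0^t\|\nabla u\|_{B^0_{\infty,\infty}}\,(\ldots)\bigr)$ up to logarithmic factors. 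If the solution is bounded in $B^s_{p,r}$ up to $T^*$, the integral is finite by the embedding $B^{s-1}_{p,r}\hookrightarrow B^0_{\infty,\infty}$, which gives the other implication.

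\textbf{Step 1 ($L^\infty$ control).} By Lemma~\ref{lem;9}, $\|u(t)\|_{L^\infty}\le\|u_0\|_{L^\infty}$. Hence $\alpha(u)$ and all derivatives of $\alpha$ evaluated on the range of $u$ are uniformly bounded. The constants in Lemma~\ref{lem;3} are therefore uniform in time, and $\|\alpha(u)\|_{B^s_{p,r}}\le C\|u\|_{B^s_{p,r}}$ with $\|\nabla\alpha(u)\|_{L^\infty}\le C\|\nabla u\|_{L^\infty}$.

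\textbf{Step 2 ($B^s_{p,r}$ estimate).} We apply the last estimate of Lemma~\ref{lem;5} (case $\theta=s>0$) with $v=\alpha(u)$, $f=u$ and $g=H[u]$, using \eqref{est}. This gives
\begin{align*}
\|u(t)\|_{B^s_{p,r}}\le\|u_0\|_{B^s_{p,r}}+C\int_0^t\bigl(1+\|\nabla u\|_{L^\infty}\bigr)\|u\|_{B^s_{p,r}}d\tau,
\end{align*}
since $\|\nabla\alpha(u)\|_{B^{s-1}_{p,r}}\le C\|u\|_{B^s_{p,r}}$. Gronwall's inequality then gives $\|u(t)\|_{B^s_{p,r}}\le\|u_0\|_{B^s_{p,r}}\exp\bigl(C\int_0^t(1+\|\nabla u\|_{L^\infty})\bigr)$. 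This already yields the criterion with $L^\infty$ in place of $B^0_{\infty,\infty}$.

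\textbf{Step 3 (the main obstacle: from $L^\infty$ to $B^0_{\infty,\infty}$).} Here I would use the logarithmic interpolation of Lemma~\ref{lem;7} in the Kozono--Taniuchi form. Applied to $\nabla u$, and noting that $B^s_{p,r}\hookrightarrow B^{\varepsilon}_{\infty,\infty}$ for $\varepsilon=s-1-d/p>0$, it gives
\begin{align*}
\|\nabla u\|_{L^\infty}\le C\|\nabla u\|_{B^0_{\infty,1}}\le C\bigl(1+\|\nabla u\|_{B^0_{\infty,\infty}}\log(\mathrm e+\|u\|_{B^s_{p,r}})\bigr).
\end{align*}
Inserting this into Step 2 and setting $y=\log(\mathrm e+\|u\|_{B^s_{p,r}})$, we obtain $y(t)\le y(0)+C\int_0^t(1+\|\nabla u\|_{B^0_{\infty,\infty}})\,y\,d\tau$. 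Gronwall then gives the double-exponential bound
\begin{align*}
\|u(t)\|_{B^s_{p,r}}\le \exp\Bigl(\log(\mathrm e+\|u_0\|_{B^s_{p,r}})\,\mathrm e^{C\int_0^t(1+\|\nabla u\|_{B^0_{\infty,\infty}})d\tau}\Bigr),
\end{align*}
which stays finite whenever the integral is finite. In the critical case $s=1+d/p$ with $r=1$ there is no room for $\varepsilon>0$. I would handle it by first running the argument in a slightly higher regularity space: smooth the data, apply the estimate, and use persistence. Alternatively, one can control $\|u\|_{B^{1+d/p}_{p,1}}$ via the $V'$ in Lemma~\ref{lem;2} with $\|\nabla u\|_{B^{d/p}_{p,1}}$ and argue that regularity propagates. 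I expect this endpoint bookkeeping to be the delicate point; the subcritical case is routine once the logarithmic inequality is in place.
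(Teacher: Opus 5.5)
For $s>1+\frac dp$ your argument is the paper's: $L^\infty$ control from Lemma~\ref{lem;9}, a $B^s_{p,r}$ transport estimate closed by Gronwall, the logarithmic inequality \eqref{ineq;intp}, and a second Gronwall leading to the double-exponential bound \eqref{ineq-thm2}. Your Step~2 is in fact the more careful version. The paper uses the ``$f=v$'' clause of Lemma~\ref{lem;2} with $V'=\|\nabla\alpha(u)\|_{L^\infty}$, although $f=u\neq v=\alpha(u)$. The commutator estimate at the end of Lemma~\ref{lem;5} gives the same structure legitimately, because $\|\nabla\alpha(u)\|_{B^{s-1}_{p,r}}\|\nabla u\|_{L^\infty}\le C\|u\|_{B^s_{p,r}}\|\nabla u\|_{L^\infty}$.

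The gap is the endpoint $s=1+\frac dp$, $r=1$ (including $B^1_{\infty,1}$). The statement covers it, and neither of your suggestions reaches it.

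\textbf{Smoothing the data.} The solutions $\mathbf{S}_t(S_Nu_0)$ have a double-exponential bound in $B^{s'}_{p,1}$, $s'>1+\frac dp$, that starts from $\log(\mathrm e+\|S_Nu_0\|_{B^{s'}_{p,1}})$. This quantity may grow linearly in $N$. The bound also involves $\int_0^t\|\nabla\mathbf{S}_\tau(S_Nu_0)\|_{B^0_{\infty,\infty}}d\tau$ rather than the corresponding integral for $u$. Transferring your hypothesis to the approximations would require stability up to $T^*$, which is what you are trying to prove. Persistence of regularity only propagates smoothness the data already have.

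\textbf{Using Lemma~\ref{lem;2} at the endpoint.} Taking $V'=\|\nabla\alpha(u)\|_{B^{d/p}_{p,1}}\approx\|u\|_{B^{1+d/p}_{p,1}}$ is circular: that is the local-existence bound, not a continuation criterion.

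\textbf{The mechanism itself fails at the endpoint.} Take $h_n$ as in Section~\ref{Ill-posedness} and $u=\sum_{n=2}^{N}2^{-n(d+1)}h_n$. Then $\|\nabla u\|_{B^0_{\infty,\infty}}\le C$ and $\|u\|_{B^{1+d/p}_{p,1}}\le CN$, yet $|\nabla u(0)|\ge cN$. So no inequality of the form \eqref{ineq;intp} holds with the $B^{1+d/p}_{p,1}$ norm inside the logarithm.

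The paper's proof has exactly this defect. It applies \eqref{ineq;intp} for every admissible $(s,p,r)$ without comment, so it also only establishes the proposition for $s>1+\frac dp$. At the endpoint, what your Step~2 rigorously gives is the criterion with $\|\nabla u\|_{L^\infty}$ in place of $\|\nabla u\|_{B^0_{\infty,\infty}}$. The $B^0_{\infty,\infty}$ version there would need a genuinely new idea.
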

\begin{proof}
    
   Applying Lemma \ref{lem;2} to (\ref{eq;scalar}) and using the fact that $\|H[u]\|_{B^s_{p,r}} \leq \|u\|_{B^s_{p,r}}$ yields
   \begin{align*}
       \|u \|_{B^s_{p,r}}\mathrm{e}^{-C\int^{t}_0\| \nabla\alpha(u)\|_{L^\infty}dt'}\leq  \|u_0\|_{B^s_{p,r}}+\int^t_0\mathrm{e}^{-C\int^{t'}_0\| \nabla\alpha(u)\|_{{L^\infty}}dt''}\|u\|_{B^s_{p,r}}dt' .
   \end{align*}
   Combining Lemma \ref{lem;3} and Gronwall's inequality, we obtain
   \begin{align}\label{ineq;thm2}
       \|u \|_{B^s_{p,r}} &\leq \|u_0\|_{B^s_{p,r}}\mathrm{e}^{C\int^{t}_0\| \nabla\alpha(u)\|_{{L^\infty}}dt'}\notag\\ 
       &\leq \|u_0\|_{B^s_{p,r}}\mathrm{e}^{C\int^{t}_0\| \nabla u\|_{{L^\infty}}dt'}.
   \end{align}
  Moreover, we have the logarithmic interpolation inequality
  \begin{align}\label{ineq;intp}
      \|\nabla u\|_{{L^\infty}} \leq C\left(1+\|\nabla u\|_{B_{\infty,\infty}^0}\log\left(\mathrm{e}+\|u\|_{B_{p,r}^s}\right)\right)
  \end{align}
  Plugging (\ref{ineq;intp}) into (\ref{ineq;thm2}),
   we obtain
   \begin{align*}
       \|u(t)\|_{B_{p,r}^s} \leq \|u_0\|_{B_{p,r}^s} \mathrm{e}^{Ct}\exp\left(C \int_0^t \|\nabla u\|_{B_{\infty,\infty}^0}\log\left(\mathrm{e}+\|u\|_{B_{p,r}^s}\right) dt'\right).
   \end{align*}
   Therefore, easy calculations lead to
   \begin{align*}
       \log\big(\mathrm{e}+\|u(t)\|_{B_{p,r}^s}\big) \leq \log\big(\mathrm{e}+\|u_0\|_{B_{p,r}^s}\big)+Ct+C\int_0^t \|\nabla u\|_{B_{\infty,\infty}^0}\log\big(\mathrm{e}+\|u\|_{B_{p,r}^s}\big)dt'.
 \end{align*}
The Gronwall's inequality thus yields
   \begin{align}\label{ineq-thm2}
       \log\big(\mathrm{e} + \|u(t)\|_{B_{p,r}^s}\big) \leq \left(\log\big(\mathrm{e} + \|u_0\|_{B_{p,r}^s}\big)+Ct\right) \mathrm{e}^{C\int_0^t\|\nabla u\|_{B_{\infty,\infty}^0}dt'}.
   \end{align}
   Assume that $T^*$ is the maximal time of the solution to the Cauchy problem to (\ref{eq;scalar}). If 
   \begin{align*}
       T^*<\infty \ \text{and}\ \int_{0}^{T^*} \| \nabla u\|_{B^0_{\infty,\infty}}dt < +\infty ,
   \end{align*}
   then from (\ref{ineq-thm2}) we know that $\underset{{0\leq t<T^*}}{\sup}\|u(t)\|_{B^s_{p,r}}<\infty$ which contradicts the fact that $T^*$ is the maximal time of existence to the Cauchy problem (\ref{eq;scalar}). We thus have completed the proof of Proposition \ref{thm;2}.
       \end{proof}
\subsection{Blow-up conditions}    

To establish the first blow-up criterion, we prove the following maximum principle for solutions to the Cauchy problem (\ref{eq;scalar}).
  \begin{lemma}
      Suppose $u(x,t)$ is a classical solution of the Cauchy problem (\ref{eq;scalar}) on $\mathbb{R}^d\times[0,T]$ with an arbitrary $T>0$. Then $u(x,t)$ and $\underset{i=1}{\overset{d}{\sum}}
\partial_{x_i}u(x,t)$ satisfy 
  \begin{align}
  &\underset{x\in \mathbb{R}^d}{\inf}u_0(x)\leq u(x,t)\leq\underset{x\in \mathbb{R}^d}{\sup}u_0(x) ,\label{ineq;1} \\
  &\sum_{i=1}^{d}\partial_{x_i}u \leq  b_0,\label{ineq;2} \\
  &-\frac{d\delta}{2}\leq\left( K*\sum_{i=1}^{d}\partial_{x_i}u\right)(x,t)\leq k.\label{ineq;3}
  \end{align}
  \end{lemma}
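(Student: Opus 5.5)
The plan is to work along the characteristics of the transport part and use the positivity and unit mass of the kernel $K$. Write $g:=f_1=\dots=f_d$, so that $\alpha(u)\cdot\nabla u=g'(u)\,w$ with $w:=\sum_{i=1}^d\partial_{x_i}u$. In the form $u_t+g'(u)\,\mathbf{1}\cdot\nabla u=-u+K*u$, where $\mathbf 1=(1,\dots,1)$, the characteristics are defined by $\dot X(t)=g'(u(X(t),t))\mathbf{1}$. Since $u$ is a classical solution with bounded $C^1$ norm on $[0,T]$, these characteristics define a $C^1$ diffeomorphism $X(\cdot,t)$ of $\mathbb{R}^d$ for each $t$. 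We also use two facts about $K$: $K>0$ and $\int_{\mathbb{R}^d}K\,dx=\widehat K(0)=1$.

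\emph{Step 1: proof of (\ref{ineq;1}).} Along a characteristic, $\frac{d}{dt}u(X(t),t)=-u+K*u$. Set $\overline M(t):=\sup_x u(x,t)$. Because $K\ge0$ and $\int K=1$, we have $K*u\le \overline M(t)$, hence $\frac{d}{dt}u(X(t),t)\le \overline M(t)-u(X(t),t)$. To make this rigorous I would use the Constantin--Escher argument for the evolution of extrema, adapted to $\mathbb{R}^d$. Since $u(\cdot,t)\in H^s$ decays at infinity, either the supremum is attained at some point $\xi(t)$, or $\overline M(t)\le 0$ and the bound is trivial. At an attained maximum, $\overline M$ is a.e.\ differentiable with $\overline M'(t)=u_t(\xi(t),t)$ and $\nabla u(\xi(t),t)=0$. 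Therefore $\overline M'(t)=-\overline M+K*u\le 0$ a.e., so $\overline M$ is nonincreasing. The infimum is handled symmetrically. I expect this step to be the main technical obstacle: the extremum may fail to be attained, and the a.e.\ differentiability of $t\mapsto\sup_x u$ has to be justified. I would first try a direct comparison along characteristics, using the flow $X$ and a Gronwall argument on $\overline M$, and fall back on the Constantin--Escher extremum lemma if that gets messy.

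\emph{Step 2: proof of (\ref{ineq;2}).} Differentiate the equation in each $x_i$ and sum over $i$. Using $\sum_i\partial_{x_i}\bigl(g'(u)w\bigr)=g''(u)w^2+g'(u)\,\mathbf 1\cdot\nabla w$, this gives
\begin{align*}
w_t+g'(u)\,\mathbf 1\cdot\nabla w=-g''(u)w^2-w+K*w .
\end{align*}
Since $g''\ge\alpha_0>0$, the quadratic term is nonpositive. Also $K*w\le \sup_x w(\cdot,t)$. The same extremum argument as in Step 1, applied to $\sup_x w$ with $\nabla w=0$ at the maximum, shows that $t\mapsto\sup_x w(x,t)$ is nonincreasing. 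Hence $w\le b_0$.

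\emph{Step 3: proof of (\ref{ineq;3}).} For the upper bound by $b_0$, note $K*w\le \sup w\le b_0$ by Step 2. For the two-sided bound in terms of $\delta$, write $K*w=\sum_i(\partial_{x_i}K)*u$.
\begin{itemize}
\item Since $K$ is radial, $\partial_{y_i}K$ is odd in $y_i$ and has zero integral. Its positive and negative parts therefore have equal mass, namely $\int_{\{y_i>0\}}(-\partial_{y_i}K)\,dy=\int_{\mathbb{R}^{d-1}}K(y',0)\,dy'$.
\item The last integral is the one-dimensional marginal of $K$ evaluated at $0$. That marginal is the 1D Bessel kernel $\tfrac12e^{-|t|}$, whose Fourier transform is $(1+\xi_i^2)^{-1}$, so its value at $0$ is $\tfrac12$.
\item Hence $\bigl((\partial_{x_i}K)*u\bigr)(x)=\int(\partial_iK)_+\,u(x-y)\,dy-\int(\partial_iK)_-\,u(x-y)\,dy$ lies between $\tfrac12\inf u-\tfrac12\sup u$ and $\tfrac12\sup u-\tfrac12\inf u$.
\end{itemize}
By (\ref{ineq;1}), $\sup u-\inf u\le\delta$ for all $t$, so each term lies in $[-\delta/2,\delta/2]$. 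Summing over $i$ gives $-\tfrac{d\delta}{2}\le K*w\le\tfrac{d\delta}{2}$. Combining with the $b_0$ bound yields $K*w\le\min\{\tfrac d2\delta,b_0\}=k$, which completes the plan.
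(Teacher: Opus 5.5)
Your argument is correct. For (\ref{ineq;3}) it is the paper's argument. The paper writes $\partial_{x_i}(K*u)$ as an integral of $\partial_{x_i}K$ over the half-space $\{y_i>0\}$ against $u(x-ye_i-w)-u(x+ye_i-w)$. It then bounds this by $\delta$ times the mass of $|\partial_{x_i}K|$ there, asserting without proof that this mass is at most $\frac12$. Your computation via the one-dimensional marginal $\frac12\mathrm{e}^{-|t|}$ supplies that missing justification. You should state that it uses $\partial_{y_i}K\le 0$ on $\{y_i>0\}$, i.e.\ that $K$ is radially nonincreasing, which is clear from its Gaussian-superposition formula.

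For (\ref{ineq;1}) and (\ref{ineq;2}) your written route differs from the paper's. The paper never uses points where extrema are attained. It writes the Duhamel formula along characteristics, $u(X(t),t)=\mathrm{e}^{-t}u_0(x)+\int_0^t\mathrm{e}^{-(t-\tau)}(K*u)(X(\tau),\tau)\,d\tau$, and the corresponding inequality for $\gamma=\sum_i\partial_{x_i}u$ after discarding $-\alpha_0\gamma^2$. It bounds $K*u$ and $K*\gamma$ by the current infimum or supremum, takes the infimum or supremum over starting points, and closes with a Gronwall argument for $\inf_x u$, $\sup_x u$, $\sup_x\gamma$. This is exactly the ``direct comparison along characteristics'' you list as your first attempt.

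Your Constantin--Escher argument gives directly that these extremal quantities are monotone in $t$, with no Gronwall step, but it needs more:
\begin{itemize}
\item decay at infinity, so that positive suprema and negative infima are attained and $\sup u_0\ge 0$, $b_0\ge 0$ in your ``trivial'' case; a general bounded classical solution need not decay;
\item a.e.\ differentiability of $t\mapsto\sup_x w(\cdot,t)$, which requires $w_t$ bounded uniformly in $x$. That is roughly one derivative more than $H^s$, $s>1+\frac d2$, provides unless you regularize.
\end{itemize}
The characteristic route needs neither attainment nor decay, only that the flow map is onto, so it applies to bounded classical solutions as the lemma is stated. Within the paper's $H^s$ framework, where Lemma \ref{lem:original-lemma-2-1} is used in exactly your way for Theorem \ref{thm;Mm}, the two routes are at the same level of rigor.
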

  \begin{proof}
  Define the characteristics associated to the system (\ref{eq;scalar}) as
    \begin{align}\label{characteristic;curve}
        \begin{cases}
            \frac{d X_i}{d t} = f'_i(t,u(t,X)),\\
            X(0,x) = x.
        \end{cases}
    \end{align}
Along the characteristic curve (\ref{characteristic;curve}) we have
  \begin{align}\label{charactersitic;eq}
      \frac{d}{d\tau}u\left( X(\tau),\tau \right)+u(X(\tau),\tau) = (K*u)\left(X(\tau),\tau\right).
  \end{align}
   Applying $\partial_{x_j}$ on both sides of the equation, we have
    \begin{align*}
        \partial_{x_j}u_t + \sum_{i=1}^{d}f^{'}_i(u)\partial_{x_ix_j}u = -\partial_{x_j}u + K*\partial_{x_j}u - \sum_{i=1}^{d}f^{''}_i(u)\partial_{x_i}u\partial_{x_j}u.
    \end{align*}
    We set $\gamma(t) =\overset{d}{\underset{i=1}{\sum}}
u_{x_i}(t,X(t,x_0))$, then
\begin{align}\label{characteristic;ineq}
        \gamma' \leq& -\gamma+\sum_{j=1}^{d}K*\partial_{x_j}u - f''_i(u)(\sum_{i=1}^{d}\partial_{x_i}u)^2 \notag\\
        \leq &-\gamma-\alpha_0\gamma^2+\sum_{j=1}^{d}K*\partial_{x_j}u.
    \end{align}
  Then integrating (\ref{characteristic;ineq}) and (\ref{charactersitic;eq}) with respect to $\tau\in[0,t]$, we obtain
 \begin{align}
     \gamma(t)&\leq \mathrm{e}^{-t} \gamma(0) 
     +\int^t_0\mathrm{e}^{-(t-\tau)}\left( K* \gamma - \alpha_0 \gamma^2 \right)(\tau)d\tau ,\label{chara;ineq}\\
     u\left(X(t),t\right) &= \mathrm{e}^{-t}u_0(x) + \int_0^t\mathrm{e}^{-(t-\tau)}(K*u)(X(\tau),\tau)d\tau.\label{chara;eq}
 \end{align}
 Let $A(t) = \underset{x\in \mathbb{R}^d}{\inf}u(x,t),\ B(t) = \underset{x\in \mathbb{R}^d}{\sup}u(x,t)$ , from (\ref{chara;eq}) and the  fact that $A(t)\leq K*u(t) \leq B(t)$ on the basis of the definition of $K(x)$, we have
 \begin{align*}
     A(t) \geq \mathrm{e}^{-t}A(0)+\int^t_0\mathrm{e}^{-(t-\tau)}A(\tau)d\tau.
 \end{align*}
 Using Gronwall's inequality, we have $u(x,t)\geq A(t)\geq A(0) = \underset{x\in \mathbb{R}^d}{\inf}u_0(x)$. Through analogous discussions, we can deduce $u(x,t)\leq B(t)\leq\underset{x\in \mathbb{R}^d}{\sup}u_0(x) $.
 Let $B_1(t) = \underset{x\in \mathbb{R}^d}{\sup}\underset{i=1}{\overset{d}{\sum}}
\partial_{x_i}u(x,t)$, combining the inequality (\ref{chara;ineq}) we have that
 \begin{align*}
     B_1(t)\leq \mathrm{e}^{-t}b_0 +\int^t_0\mathrm{e}^{-(t-\tau)}B_1(\tau)d\tau.
 \end{align*}
 Thus Gronwall's inequality ensures (\ref{ineq;2}) and $\left( K*\overset{d}{\underset{i=1}{\sum}}
\partial_{x_i}u\right)(x,t)\leq b_0$.
 Furthermore, we can write
 \begin{align}\label{eq;invo}
     \partial_{x_i}\left( K*u \right) = \int_{\mathbb{R}^{d-1}}\int_0^{\infty}\partial_{x_i}K(y,w)[u(x-ye_i-w)-u(x+ye_i-w)]dydw,
 \end{align}
 where $e_i$ is i-th standard unit basis vector and $w = (w_1,\dots,w_{i-1},w_{i+1},\dots,w_d)$.

Combining (\ref{ineq;1}) and (\ref{eq;invo}) we have estimate
\begin{align*}
    \| \partial_{x_i}\left( K*u \right) \|_{L^{\infty}}\leq \delta \int_{\mathbb{R}^{d-1}}\int_0^{\infty}|\partial_{x_i}K(y,w)|dydw \leq\frac{\delta}{2},
\end{align*}
 which proves the left-side inequality of (\ref{ineq;3}). 
  \end{proof}
\begin{proof}[\normalfont\bfseries Proof of Theorem~\ref{thm;blowup}]
Following the above maximum principle, we proceed to the discussion of blow-up conditions. Along characteristic curves analogous to those in (\ref{characteristic;curve}), we recover the same estimate as (\ref{chara;ineq}). Substituting the inequality (\ref{ineq;3}) into (\ref{characteristic;ineq}), we obtain
\begin{align}\label{eq;ode}
    \gamma' \leq -\alpha_0\gamma^2-\gamma+ k.
\end{align}
Since $k \geq 0$, the equation $\alpha_0\gamma^2+\gamma-k = 0$ has two real roots $\gamma_1$ and $\gamma_2$:
\begin{align*}
    \gamma_1 = \frac{-1-\sqrt{1+4\alpha_0k}}{2\alpha_0} \leq-\frac{1}{\alpha_0} \ \text{and} \ \gamma_2 = \frac{-1+\sqrt{1+4\alpha_0k}}{2\alpha_0} \geq 0.
\end{align*}
Thus the differential inequality (\ref{eq;ode}) is rewritten as
\begin{align*}
    \gamma' \leq -\alpha_0(\gamma-\gamma_1)(\gamma-\gamma_2) \ \text{with} \ \gamma(0) = \sum_{i=1}^{d}\partial_{x_i} u_0(x_0).
\end{align*}
    Since $\gamma(0)<\gamma_1$, then $\gamma(t)\leq \gamma(0)<\gamma_1$ for all $t\in[0,T)$.
By a direct calculation, we obtain
\begin{align*}
    \frac{1}{\gamma_1-\gamma
    _2}\left( \frac{1}{\gamma(t)-\gamma_1}-\frac{1}{\gamma(t)-\gamma_2} \right)\gamma'(t)\leq -\alpha_0,
\end{align*}
thus we have
\begin{align*}
    \gamma(t)\leq \frac{-\gamma_1+\gamma_2\frac{|\gamma(0)-\gamma_1|}{|\gamma(0)-\gamma_2|}\cdot \mathrm{e}^{-\alpha_0(\gamma_1-\gamma_2)t}}{\frac{|\gamma(0)-\gamma_1|}{|\gamma(0)-\gamma_2|}\cdot \mathrm{e}^{-\alpha_0(\gamma_1-\gamma_2)t}-1}.
\end{align*}
Define $R:=\frac{-1+\sqrt{1+4\alpha_0k}-2\alpha_0\underset{i=1}{\overset{d}{\sum}}\partial_{x_i}u_0(x_0)}{-1-\sqrt{1+4\alpha_0k}-2\alpha_0\underset{i=1}{\overset{d}{\sum}}\partial_{x_i}u_0(x_0)},\quad \text{notice that } R>1
$, we find out that $\gamma(t)$ blows up to $-\infty$ before the time $t$ reaches $T\leq\frac{\log R}{\sqrt{1+4\alpha_0k}} .$
\end{proof}

Before presenting the second blow-up criterion, we state the following lemma, which is a higher-dimensional version of Theorem 2.1 in \cite{constantin1998wave}.
\begin{lemma}\label{lem:original-lemma-2-1}
Let $T>0$ and $v\in C^1([0,T);H^s(\mathbb R^d))$ with
$s>1+\frac d2$. Then for every $t\in[0,T)$, there exists at
least one point $\xi(t)\in\mathbb R^d$ such that
\begin{align*}
m(t)=\inf_{x\in\mathbb R^d}
\left\{\sum_{i=1}^{d}v_{x_i}(t,x)\right\}
=\sum_{i=1}^{d}v_{x_i}(t,\xi(t)).
\end{align*}
The function $m(t)$ is almost everywhere differentiable on
$[0,T)$ with
\begin{align*}
\frac{dm}{dt}
=\sum_{i=1}^{d}v_{tx_i}(t,\xi(t)).
\end{align*}
\end{lemma}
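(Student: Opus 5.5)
We follow the strategy of Constantin and Escher, applied to the scalar function $w(t,x):=\sum_{i=1}^d v_{x_i}(t,x)$. Since $s>1+\frac d2$, we have $w\in C^1([0,T);H^{s-1}(\mathbb R^d))$ with $s-1>\frac d2$. By Sobolev embedding, $w$ and $w_t=\sum_i v_{tx_i}$ are continuous in $x$ and vanish as $|x|\to\infty$. Moreover $t\mapsto w(t,\cdot)$ is $C^1$ into $C_0(\mathbb R^d)$, i.e. into bounded continuous functions decaying at infinity, with the sup norm.

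\textbf{Existence of $\xi(t)$.} Fix $t$. If $w(t,\cdot)\equiv 0$, any point works. Otherwise, since $w(t,\cdot)\in H^{s-1}$, the function $w(t,\cdot)$ is not identically nonnegative unless $m(t)=0$. In every case $m(t)\le 0$, because $w(t,\cdot)$ vanishes at infinity. If $m(t)<0$, the sublevel set $\{w(t,\cdot)\le m(t)/2\}$ is compact and nonempty, so the infimum is attained on it. If $m(t)=0$, we must still show that it is attained. We argue that $w(t,\cdot)$ has zero mean in the distributional sense, being a sum of derivatives of an $H^s$ function. So if $w\ge 0$ everywhere, then $w\equiv 0$ whenever $w\in L^1$. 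In general we avoid this: if $w\ge0$ and $w\not\equiv0$, we test against a Gaussian $\phi_R(x)=e^{-|x|^2/R^2}$. We get $\int w\,\phi_R=-\sum_i\int v\,\partial_i\phi_R\to0$, while the left side is bounded below by a positive constant for large $R$ by monotone convergence. This gives a contradiction. Hence $w\ge0$ forces $w\equiv0$, and then any point attains the infimum.

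\textbf{Differentiability.} We show that $m$ is locally Lipschitz: $|m(t)-m(s)|\le \|w(t)-w(s)\|_{L^\infty}\le C\|w(t)-w(s)\|_{H^{s-1}}\le C'|t-s|$ on compact subintervals. By Rademacher's theorem, $m$ is differentiable a.e. Then fix a differentiability point $t$ and a minimizer $\xi(t)$. For $h>0$,
\begin{align*}
m(t+h)-m(t)\le w(t+h,\xi(t))-w(t,\xi(t)),
\end{align*}
so dividing by $h$ and letting $h\downarrow0$ gives $m'(t)\le w_t(t,\xi(t))$. For $h<0$, the same inequality divided by the negative $h$ gives $m'(t)\ge w_t(t,\xi(t))$. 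Hence $m'(t)=w_t(t,\xi(t))=\sum_i v_{tx_i}(t,\xi(t))$.

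For the pointwise limit we only need $w(t+h,\xi(t))-w(t,\xi(t))=h\,w_t(t,\xi(t))+o(h)$. This follows from the $C^1$-in-time regularity of $w$ with values in $L^\infty$, which Sobolev embedding provides.

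The main obstacle is the attainment of the infimum when $m(t)=0$. We handle it with the zero-mean argument above, since a nonnegative nonzero continuous divergence of an $H^s$ field cannot exist. The rest is routine.
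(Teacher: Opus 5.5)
Your differentiability argument is correct and is essentially the paper's: the local Lipschitz bound through $H^{s-1}\hookrightarrow L^\infty$, Rademacher, and the two-sided difference quotients at a fixed minimizer $\xi(t)$. The gap is in the attainment of the infimum when $m(t)=0$, i.e.\ in proving that $w=\sum_i v_{x_i}\ge0$ forces $w\equiv0$.

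You assert $\int w\,\phi_R=-\sum_i\int v\,\partial_i\phi_R\to0$ for the isotropic Gaussian, but this does not follow for $d\ge2$. Since $\|\partial_i\phi_R\|_{L^2}=cR^{d/2-1}$, Cauchy--Schwarz gives decay only for $d=1$; for $d=2$ you would need an extra density argument. For $d\ge3$ the limit is genuinely nonzero for suitable $v\in H^s$. Take $e=d^{-1/2}(1,\dots,1)$, $\Phi$ the fundamental solution of $\Delta$, and $g\in C_c^\infty$ with $\int g\neq0$. Then $v=\partial_e\Phi*g$ lies in every $H^s(\mathbb R^d)$, and $\int v\,\partial_e\phi_R=-\int g\,\partial_e^2(\Phi*\phi_R)\to-\frac1d\int g\neq0$. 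Your step never uses $w\ge0$, so it cannot be correct as stated.

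For the same reason, the principle you invoke at the end is false. For $d\ge3$, take $0\le g\in C_c^\infty$ with $g\not\equiv0$ and set $F=\nabla\Phi*g$. Then $F\in H^s$, because $|F|\sim|x|^{1-d}$ is square integrable at infinity, while $\operatorname{div}F=g$ is nonnegative and nonzero.

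What rescues the lemma is that your field is $(v,\dots,v)$, so $w=\sqrt d\,\partial_e v$ is a single directional derivative. The paper exploits exactly this. If $w(t,\cdot)\ge0$, then $r\mapsto v(t,x+r(1,\dots,1))$ is nondecreasing. Since $v(t,\cdot)\in H^s\hookrightarrow C_0$, it tends to $0$ as $r\to\pm\infty$, so it vanishes for every $x$. Hence $v(t,\cdot)\equiv0$ and $w(t,\cdot)\equiv0$. Equivalently, $v(t,\cdot)\not\equiv0$ implies $m(t)<0$, and then your compactness argument applies.

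If you want to keep a test-function proof, use functions elongated along $e$, such as $\chi(x\cdot e/R)\psi(x')$ with $x'\perp e$ and suitable $\chi,\psi\ge0$. Then $\|\partial_e\phi_R\|_{L^2}\sim R^{-1/2}\to0$, while $\int w\,\phi_R$ stays bounded below by a positive constant.
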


\begin{proof}

Fix $t\in[0,T)$ and define
\begin{align*}
m(t):=\inf_{x\in\mathbb R^d}
\left\{\sum_{i=1}^{d}v_{x_i}(t,x)\right\},
\qquad
m_i(t):=\inf_{x\in\mathbb R^d}\{v_{x_i}(t,x)\}.
\end{align*}

If $v(t,\cdot)\equiv0$, then $m(t)=0$, and any point may
be chosen as $\xi(t)$. We may therefore assume that
$v(t,\cdot)\not\equiv0$.

If $m_i(t)\ge0$, we have that $v(t,\cdot)$ is non-decreasing
in the $x_i$ direction. Recall that
$v(t,\cdot)\in L^2(\mathbb R^d)$, which implies
$v(t,\cdot)\equiv0$. So we may assume $m_i(t)<0$.

We further show that $m(t)<0$. Otherwise,
\begin{align*}
\sum_{i=1}^{d}v_{x_i}(t,x)\ge0
\qquad\text{for all }x\in\mathbb R^d.
\end{align*}
For each fixed $x\in\mathbb R^d$, this gives
\begin{align*}
\frac{d}{dr}v(t,x+r(1,\ldots,1))
=\sum_{i=1}^{d}v_{x_i}(t,x+r(1,\ldots,1))
\ge0.
\end{align*}
Thus $r\mapsto v(t,x+r(1,\ldots,1))$ is non-decreasing.
By Sobolev embedding, $v(t,\cdot)$ is continuous and vanishes
at infinity, so
\begin{align*}
\lim_{r\to-\infty}v(t,x+r(1,\ldots,1))
=\lim_{r\to+\infty}v(t,x+r(1,\ldots,1))
=0.
\end{align*}
Consequently, this function vanishes identically.
Since $x$ is arbitrary, we obtain $v(t,\cdot)\equiv0$,
a contradiction. Hence $m(t)<0$.

Since $v_{x_i}(t,\cdot)\in H^{s-1}(\mathbb R^d)$, we have
$\underset{|x|\to\infty}{\lim}v_{x_i}(t,x)=0$.
Then there exists at least one point $\xi(t)\in\mathbb R^d$
such that
\begin{align*}
m(t)=\sum_{i=1}^{d}v_{x_i}(t,\xi(t)).
\end{align*}

We assume that $t,t'\in[0,T)$ are fixed.
If $m(t)<m(t')$, then
\begin{align*}
\begin{aligned}
0\le m(t')-m(t)
&\le\inf_{x\in\mathbb R^d}
\left(
\sum_{i=1}^{d}v_{x_i}(t',x)
-\sum_{i=1}^{d}v_{x_i}(t,\xi(t))
\right)\\
&\le\sum_{i=1}^{d}
\left[v_{x_i}(t',\xi(t))-v_{x_i}(t,\xi(t))\right].
\end{aligned}
\end{align*}

By Sobolev embedding,
\begin{align*}
\begin{aligned}
|m(t)-m(t')|
&\leq C
\left\|\sum_{i=1}^{d}
\bigl(v_{x_i}(t)-v_{x_i}(t')\bigr)\right\|_{H^{s-1}}\\
&\leq C
|t-t'|\max_{0\le\tau\le\max\{t',t\}}
\left\|\sum_{i=1}^{d}v_{tx_i}(\tau)\right\|_{H^{s-1}}.
\end{aligned}
\end{align*}
Since $v_{tx_i}\in C([0,T);H^{s-1}(\mathbb R^d))$, we see
that $m$ is locally Lipschitz continuous on $[0,T)$.
Therefore, Rademacher's theorem implies that $m$ is differentiable almost everywhere on $(0,T)$.

Fix $t\in(0,T)$. We have
\begin{align*}
\lim_{h\to0}\sup_{y\in\mathbb R^d}
\left(
\frac{v_{x_i}(t+h,y)-v_{x_i}(t,y)}{h}
-v_{tx_i}(t,y)
\right)=0.
\end{align*}
By the definition of $m$,
\begin{align*}
m(t+h)
=\sum_{i=1}^{d}v_{x_i}(t+h,\xi(t+h))
\le\sum_{i=1}^{d}v_{x_i}(t+h,\xi(t)).
\end{align*}
Consequently, setting $h>0$, we obtain
\begin{align*}
\lim_{h\to0^+}\sup
\frac{m(t+h)-m(t)}{h}
\le\sum_{i=1}^{d}v_{tx_i}(t,\xi(t)).
\end{align*}
On the other hand,
\begin{align*}
\lim_{h\to0^+}\inf
\frac{m(t)-m(t-h)}{h}
\ge\sum_{i=1}^{d}v_{tx_i}(t,\xi(t)).
\end{align*}
Since $m$ is differentiable almost everywhere,
\begin{align*}
\frac{dm}{dt}
=\sum_{i=1}^{d}v_{tx_i}(t,\xi(t)),
\end{align*}
which completes the proof.
\end{proof}
\begin{proof}[\normalfont\bfseries Proof of Theorem~\ref{thm;Mm}]
Applying $\partial_{x_j}$ on both sides of the equation \ref{eq;scalar}, we have
\begin{align*}
\partial_{x_j}u_t+\sum_{i=1}^d f'_i(u)\partial_{x_i x_j}u = -\partial_{x_j}u + K*\partial_{x_j}u -\sum_{i=1}^d f''_i(u)\partial_{x_i}u\partial_{x_j}u.
\end{align*}
By Lemma \ref{lem:original-lemma-2-1}, we obtain
\begin{align*}
m'(t)+m(t)\sum_{i=1}^d f''_i\partial_i u &= -m(t)+\sum_{i=1}^d \big[K*u_{x_i}\big]
\end{align*}
\noindent and
\begin{align*}
M'(t)+M(t)\sum_{i=1}^d f''_i\partial_i u &= -M(t)+\sum_{i=1}^d \big[K*u_{x_i}\big].
\end{align*}
 By observing
\begin{align*}
M(t)\ge \sum_{i=1}^d \big[K*u_{x_i}\big]\ge m(t),
\end{align*}
 we can obtain
\begin{align*}
m'(t)&\le -m(t)\sum_{i=1}^d f''_i\partial_i u + M(t)-m(t)\\
&\le -\alpha_0 m(t)^2 + M(t)-m(t).
\end{align*}
For a.e. $t\in[0,T)$, we have
\begin{align}\label{ineq;Mm}
    M'(t) \le -\alpha_0 M(t)^2,\quad m'(t) \le -\alpha_0 m(t)^2 - m(t) + M(t).
\end{align}
Integrating the first inequality in (\ref{ineq;Mm}), we obtain
\begin{align}\label{zyM}
M(t)\le\frac{M(0)}{1+\alpha_0M(0)t},
\end{align}
thus
\begin{equation}\label{eq:improved-lower-slope}
m'(t)\le-\alpha_0m(t)^2-m(t)
+\frac{M(0)}{1+\alpha_0M(0)t}.
\end{equation}
Define
\begin{equation}
a(t):=
\frac{3}{4}+\frac{1}{4}\sqrt{1+\dfrac{8\alpha_0M(0)}
{1+\alpha_0M(0)t}}\quad \text{and}\quad 
\psi(t):=-\alpha_0m(t)-a(t).
\end{equation}
Then $a(t)\ge1$, and direct calculation gives
\begin{equation}\label{eq:improved-barrier}
(a-1)(2a-1)
=\frac{\alpha_0M(0)}{1+\alpha_0M(0)t}
\quad \text{and}\quad
a'=-\frac{(a-1)^2(2a-1)^2}{4a-3}.
\end{equation}
Combining \eqref{eq:improved-lower-slope} and
\eqref{eq:improved-barrier}, we infer that
\begin{align*}
\psi'
&=-\alpha_0m'-a'\\
&\ge(\psi+a)^2-(\psi+a)-(a-1)(2a-1)-a'\\
&=\psi^2+(2a-1)\psi+\frac{4(a-1)^4}{4a-3}.
\end{align*}
Since $a\ge1$, a continuity argument yields $\psi(t)>0$ on $[0,T)$.
Thus $
\psi'\ge \psi^2+\psi
\ \text{ for a.e. }t\in[0,T).
$
Thanks to the condition (\ref{cond;bu}) and Lemma \ref{lem;10}, $m\rightarrow-\infty$ as $t\rightarrow T$, where $T$ is estimated as
\begin{align*} T\le\log\left(1+\frac1{\psi(0)}\right)=\log \left(
1-\frac{4}{4\alpha_0 m(0)+3+\sqrt{1+8\alpha_0 M(0)}}\right).
\end{align*}
\end{proof}

\section{Ill-posedness}\label{Ill-posedness}
\begin{proof}[\normalfont\bfseries Proof of Theorem~\ref{thm;6}]
Fix $\sigma\in(0,1)$ with $\|(n^{-\sigma})_{n\geq2}\|_{l^{r}}<\infty$ for
$\frac1r<\sigma<1$. Let $\varphi$ denote the function occurring in the definition of the dyadic blocks in Section 2 and choose
$\tilde\varphi\in\mathcal{D}(\mathbb{R}^d)$, radial, real, non-negative and
non-zero, with 
\begin{align*}
    \operatorname{supp}\tilde\varphi\subset\Big\{1.4\leq|\xi|\leq1.49\Big\}.
\end{align*}
Put $e:=d^{-1/2}(1,\dots,1)$ and, for $n\geq2$, define $h_n$ by
\begin{align*}
    \widehat{h_n}(\xi):=i\,2^{-n}\,(\xi\cdot e)\,
    \tilde\varphi(2^{-n}\xi),
    \quad\text{that is}\quad
    h_n(x)=2^{nd}\big(\partial_{e}\check{\tilde\varphi}\big)(2^nx)
\end{align*}
and set
\begin{align*}
    u_0^{(N)}:=(\log N)^{-1}\sum_{n=2}^{N}n^{-\sigma}2^{-n(d+1)}h_n .
\end{align*}
By direct computation, we obtain
\begin{align*}
    \|h_n\|_{L^{p}}&=\|\partial_{e}\check{\tilde\varphi}\|_{L^{p}}\,2^{nd(1-\frac{1}{p})},\
    \|h_n\|_{L^{\infty}}=\|\partial_{e}\check{\tilde\varphi}\|_{L^{\infty}}\,2^{nd}, \
    \partial_{e}h_n(0)=-{\int_{\mathbb{R}^d}\tilde\varphi(\xi)\,(\xi\cdot e)^2
    \,\mathrm d\xi}\,2^{n(d+1)}.
\end{align*}
Hence
\begin{align*}
    2^{\,n(1+\frac{d}{p})}\big\|\Delta_nu_0^{(N)}\big\|_{L^{p}}
    =\|\partial_{e}\check{\tilde\varphi}\|_{L^{p}}\,(\log N)^{-1}n^{-\sigma},
\end{align*}
which implies 
\begin{align*}
    \big\|u_0^{(N)}\big\|_{B^{1+\frac{d}{p}}_{p,r}}\leq \|\partial_{e}\check{\tilde\varphi}\|_{L^{p}}\|(n^{-\sigma})_{n\geq2}\|_{l^{r}}(\log N)^{-1},
    \
    \big\|u_0^{(N)}\big\|_{L^{\infty}}\leq \|\partial_{e}\check{\tilde\varphi}\|_{L^{\infty}}(\log N)^{-1},
\end{align*}
while
\begin{align*}
    \gamma(0)&:=\sum_{i=1}^{d}\partial_{x_i}u_0^{(N)}(0)
    \leq-\frac{\sqrt d\int_{\mathbb{R}^d}\tilde\varphi(\xi)\,(\xi\cdot e)^2
    \mathrm d\xi}{1-\sigma}\,(\log N)^{-1}
    \big((N+1)^{1-\sigma}-2^{1-\sigma}\big).
\end{align*}
For $N$ large enough, we have 
\begin{align*}
\sqrt{1+4 d\alpha_0\|u_0^{(N)}\|_{L^{\infty}}}\leq\sqrt2\  \text{and} \ -2\alpha_0\gamma(0)-1\geq2\sqrt{1+4 d\,\alpha_0\|u_0^{(N)}\|_{L^{\infty}}} .
\end{align*}
Define $\Gamma = \sqrt{d}\int_{\mathbb{R}^d}\tilde\varphi(\xi)\,(\xi\cdot e)^2 \,\mathrm d\xi$. Since $\log\frac{1+x}{1-x}\leq\frac{2x}{1-x}$, after replacing $\tilde\varphi$ by a sufficiently small positive multiple, the estimate of Theorem \ref{thm;blowup} yields
\begin{align*}
    T &\leq \frac{1}{\sqrt{1+4d\alpha_0\|u_0^{(N)}\|_{L^\infty}}}\log\left(\frac{\big(2\alpha_0 \Gamma(\log N)^{-1}\underset{n=2}{\overset{N}{\sum}} n^{-\sigma}-1\big)+\sqrt{1+4d\alpha_0\|u_0^{(N)}\|_{L^\infty}}}{\big(2\alpha_0 \Gamma(\log N)^{-1}\underset{n=2}{\overset{N}{\sum}} n^{-\sigma}-1\big)-\sqrt{1+4d\alpha_0\|u_0^{(N)}\|_{L^\infty}}}\right) \\
&\leq \frac{2}{\big(2\alpha_0 \Gamma(\log N)^{-1}\underset{n=2}{\overset{N}{\sum}} n^{-\sigma}-1\big)-\sqrt{1+4d\alpha_0\|u_0^{(N)}\|_{L^\infty}}} \\
&\leq \frac{4(1-\sigma)}{\alpha_0\Gamma}\frac{\log N}{(N+1)^{1-\sigma}-2^{1-\sigma}} \\
&\leq \frac{1}{\log N}.
\end{align*}

By the embedding $B^{1+\frac{d}{p}}_{p,r}\hookrightarrow B^{1}_{\infty,r}\hookrightarrow
B^{1}_{\infty,\infty}$ and Proposition \ref{prop2.1}, we have
\begin{align*}
    \|\nabla u(t)\|_{B^{0}_{\infty,\infty}}\leq C\,\|u(t)\|_{B^{1+\frac{d}{p}}_{p,r}},
    \quad t\in(0,T).
\end{align*}
Since $T<+\infty$, $\gamma(0)\rightarrow-\infty$ as $N\rightarrow+\infty$, applying Proposition \ref{thm;2} to the smooth solution and using the above embedding, we obtain
\begin{align*}
\int_0^T \|u(t)\|_{B^{1+\frac{d}{p}}_{p,r}} dt=+\infty.
\end{align*}
If $\|u(t)\|_{B_{p,r}^{1+\frac{d}{p}}}$ belongs to $L^\infty(0,T)$, then $\int_{0}^{T} \|u(t)\|_{B_{p,r}^{1+\frac{d}{p}}} \,\mathrm{d}t \leq T\bigl\|\|u(t)\|_{B_{p,r}^{1+\frac{d}{p}}}\bigr\|_{L^\infty(0,T)} < \infty$, which leads to a contradiction. In particular, for every sufficiently large $N$, there exists $t_N \in (0,T)$ satisfying
\begin{align*}
\|u(t_N)\|_{B_{p,r}^{1+\frac{d}{p}}} \geq \log N \quad \text{with} \quad t_N < T \leq \frac{1}{\log N}.
\end{align*}
Thus the solution cannot be extended continuously in $B_{p,r}^{1+\frac{d}{p}}$ up to $T$.
The proof of Theorem~\ref{thm;6} is complete.
\end{proof}

\smallskip
\noindent\textbf{Acknowledgments}

\noindent
This work is supported by the National Natural Science Foundation of China
(No.~12571261).
	
	\smallskip
	\noindent\textbf{Conflict of interest}
	The authors declare that they have no conflict of interest.
	
	\smallskip
	\noindent\textbf{Data Availability}
	No data was used for the research described in the article.

	\phantomsection
\addcontentsline{toc}{section}{\refname}

\end{document}